\newcommand{\R}{{\mathbb R}}
\newcommand{\C}{{\mathbf A}}
\newcommand{\Q}{{\mathcal Q}}
\newcommand{\HH}{{\mathcal H}}
\newcommand{\E}{{\mathrm{E}}}
\newcommand{\I}{{\rm I}}
\newcommand{\OO}{{\mathbf{0}}}
\newcommand{\Rn}{{\R}^n}
\newcommand{\dx}{\, \mathrm{d} x}
\newcommand{\ds}{\, \mathrm{d} s}
\newcommand{\de}{\, \mathrm{d}}
\renewcommand{\dh}{\, \mathrm{d} \mathcal{H}^{n-1}}
\newcommand{\hn}{\mathcal{H}^{n-1}}
\newcommand{\wt}{\widetilde}
\newcommand{\tr}{\mathrm{tr}\,}
\newcommand{\diver}{\mathrm{div}\,}
\newcommand{\Ker}{\mathrm{Ker}\,}
\newcommand{\Ima}{\mathrm{Im}\,}
\newcommand{\dime}{\mathrm{dim}\,}
\newcommand{\rk}{\mathrm{rk}\,}
\newcommand{\sm}{\setminus}
\newcommand{\Mnn}{{\mathbb{M}^{n}_{\mathrm{sym}}}}
\newcommand{\Mnnp}{{\mathbb{M}^{n, +}_{\mathrm{sym}}}}
\newcommand{\Mb}{\mathcal{M}_b}
\newcommand{\Mbn}{\mathcal{M}_b(\Omega;\Mnn)}
\newcommand{\Lnn}{L^2(\Omega;\Mnn)}
\newcommand{\Lbn}{L^2(\dom; \Rn)}
\newcommand{\Hdiv}{H(\mathrm{div},\Omega)}
\newcommand{\dom}{{\partial \Omega}}
\newcommand{\weak}{\rightharpoonup}
\newcommand{\wstar}{\stackrel{*}\rightharpoonup}
\newcommand{\mres}{\mathbin{\vrule height 1.6ex depth 0pt width
0.13ex\vrule height 0.13ex depth 0pt width 1.3ex}}
\theoremstyle{plain}
\theoremstyle{plain}
\newtheorem{theorem}{Theorem}[section]
\newtheorem{corollary}[theorem]{Corollary}
\newtheorem{proposition}[theorem]{Proposition}
\newtheorem{lemma}[theorem]{Lemma}
\theoremstyle{definition}
\newtheorem{definition}[theorem]{Definition}
\theoremstyle{remark}
\newtheorem{remark}[theorem]{Remark}
\numberwithin{equation}{section}
\title[Dissipative boundary conditions and entropic solutions]{Dissipative boundary conditions and entropic solutions in dynamical perfect plasticity}
\author[J.-F. Babadjian]{Jean-Fran\c cois Babadjian}
\address[J.-F. Babadjian]{Universit\'e Paris-Saclay, CNRS,  Laboratoire de math\'ematiques d'Orsay, 91405, Orsay, France.}
\email[J.-F. Babadjian]{jean-francois.babadjian@math.u-psud.fr}
\author[V. Crismale]{Vito Crismale}
\address[V. Crismale]{CMAP, \'Ecole Polytechnique, CNRS, 91128 Palaiseau Cedex, France.}
\email[V. Crismale]{vito.crismale@polytechnique.edu}
\begin{document}
\begin{abstract}
We prove the well--posedness of a dynamical perfect plasticity model under general assumptions on the stress constraint set and on the reference configuration. The problem is studied by combining both calculus of variations and hyperbolic methods. The hyperbolic point of view enables one to derive a class of dissipative boundary conditions, somehow intermediate between homogeneous Dirichlet and Neumann ones. By using variational methods, we show the existence and uniqueness of solutions. Then we establish the equivalence between the original variational solutions and generalized entropic--dissipative ones, derived from a weak hyperbolic formulation for initial--boundary value Friedrichs' systems with convex constraints.
\end{abstract}
%\keywords{ }
%\textsc{MSC2010:}
%\subjclass[2010]{ }

\maketitle

\setcounter{tocdepth}{1}  
\tableofcontents

\section{Introduction}

In this paper we study the problem of small--strain dynamical perfect plasticity, under general constitutive assumptions, exploiting both the theory of constrained initial--boundary value Friedrichs'  systems and techniques from the calculus of variations. Perfect plasticity is a classical theory in continuum mechanics (see e.g. \cite{Hill, Lubliner}), developed to predict the appearance of permanent deformations in solids as the result of critical internal stresses. 
Let  $\Omega \subset \Rn$ be a  bounded open set  representing the reference configuration of an elasto--plastic body, $u :\Omega \times (0,T) \to \Rn$ be a kinematically admissible displacement field, and $\sigma:\Omega \times (0,T) \to \Mnn$ be a statically admissible Cauchy stress tensor. In a dynamical framework, given an external body load $f\colon\Omega \times (0,T) \to \Rn$, $u$ and $\sigma$ satisfy the equation of motion
\begin{subequations}\label{eqs:perplasintro}
\begin{equation}\label{1.4}
\ddot{u} - \diver \sigma=f\quad\text{in }\Omega \times (0,T)\,,
\end{equation}
where $\ddot u$, the second partial derivative of $u$ with respect to time, is the acceleration. In small strain perfect plasticity, the linearized strain tensor $\E u:=(\mathrm{D}u+\mathrm{D}u^T)/2$ decomposes as
\begin{equation}\label{1.1}
\E u=e+p\,,
\end{equation}
where $e$ and $p:\Omega \times (0,T) \to \mathbb M^n_{\rm sym}$ are the elastic and the plastic strains, corresponding to reversible and irreversible deformations, respectively. The {\it Hooke's law}
\begin{equation}\label{1.1b}
\sigma:=\C e= \lambda(\tr e) \I_n + 2 \mu e
\end{equation} 
gives the expression of the Cauchy stress as a linear mapping of the elastic strain, for some  fourth order symmetric and isotropic elasticity tensor $\C$, whose expression involves the {\it Lam\'e coefficients} $\lambda$, $\mu$ satisfying the ellipticity conditions $ \mu>0$ and $2\mu + n \lambda >0$. In perfect plasticity, the Cauchy stress tensor is constrained to stay in a fixed closed and convex set $\mathbf K \subset \Mnn$ containing the origin as an interior point
\begin{equation}\label{1.2}
\sigma \in \mathbf K\,.
\end{equation}
When $\sigma$ lies inside the interior of $\mathbf K$, the material behaves elastically  and no additional inelastic strains are created  ($\dot p=0$). On the contrary, when $\sigma$ reaches the boundary of $\mathbf K$ a plastic flow may develop in such a way that a non--trivial permanent plastic strain $p$ may remain after unloading.  The evolution of $p$ is described by the {\it Prandtl-Reuss law}
$\dot p \in N_{\mathbf K}(\sigma)$,
where $N_{\mathbf K}(\sigma)$ is the normal cone to $\mathbf K$ at $\sigma$, or equivalently by the {\it Hill's principle of maximum plastic work 
}
\begin{equation}\label{1.3}
\sigma\colon\dot p=H(\dot p)\,,
\end{equation}
with $H(q)=\sup_{\tau \in \mathbf K}\tau\colon q$ the support function of $\mathbf K$.
 
The problem of dynamical perfect plasticity consists thus in finding a displacement
$u \colon \Omega \times (0,T) \to \Rn$, an elastic strain $e \colon \Omega \times (0,T) \to \Mnn$, a plastic strain $p \colon \Omega \times (0,T) \to \Mnn$, and a Cauchy stress $\sigma \colon \Omega \times (0,T) \to \Mnn$ 
such that \eqref{1.4}--\eqref{1.3} hold in $\Omega \times (0,T)$, starting from initial conditions
\begin{equation}\label{1.5}
(u(0),\dot u(0),e(0),p(0))=(u_0,v_0,e_0,p_0),
\end{equation}
and with suitable boundary conditions on $\partial \Omega \times (0,T)$.
\end{subequations}
In the quasistatic setting, i.e., when the equation of motion \eqref{1.4} is replaced by the equilibrium equation $-\diver \sigma=f$ in $\Omega \times (0,T)$, the problem has been studied in \cite{Suq81b} and later revisited in \cite{DMDSMor06} in the framework of rate independent processes. A similar static problem, also known as Hencky plasticity, has been investigated in \cite{TemStr80,AnzGia82}. In the dynamical case, existence and uniqueness of solutions to this problem has been proven in the case of Dirichlet boundary conditions on $u$ in \cite{AnzLuc87} when $\mathbf{K}$ is cylindrical and bounded in the direction $\mathbb M^n_D$ of deviatoric matrices (typically the Von Mises or Tresca models), and in \cite{BabMor15} for a general $\mathbf{K}$. 
The proof in \cite{AnzLuc87, BabMor15} relies on the approximation through visco--plastic regularized models. Another approach introduced in \cite{DavSte19} is based on the minimization of time--space convex functionals (called WIDE for weighted--inertia--dissipation--energy) involving a small parameter in an exponentially time decreasing weight, and by passing to the parameter limit. We remark that in this last reference, the authors also consider the case of mixed Dirichlet and homogeneous--Neumann boundary conditions but with a cylindrical elasticity set as in \cite{AnzLuc87}, bounded in the direction of $\mathbb M^n_D$.

The present contribution  focuses on an approach to dynamic perfect plasticity based on entropic formulations of hyperbolic conservation laws. A simplified anti--plane model has been studied in \cite{BM17}. We extend here the result of \cite{BM17} to the general vectorial case of \eqref{eqs:perplasintro}, by removing the antiplane shear displacement  assumption, by considering a general convex set $\mathbf{K}$, and by working in general Lipschitz reference configurations $\Omega$. In the sequel, we describe the abstract approach, and point out the mathematical issues arising in the general elasto--plastic setting.

\medskip

The starting point is to express \eqref{1.4}, \eqref{1.1}, \eqref{1.1b} and \eqref{1.5} as a non--stationary Friedrichs' system \begin{subequations}\label{eqs:FrisistIntro}
\begin{equation}\label{Frisist1Intro}
\begin{cases}
\displaystyle \partial_t U + \sum_{i=1}^n A_i \,\partial_i U +P= F\,,\\
U(0)=U_0\,,
\end{cases}
\end{equation}
where each component of $U\colon\Omega \times (0,T) \to \R^d$, with $d=n+\frac{n(n+1)}{2}$ is a suitable explicit linear combination of the $n$ components of $\dot u$ and of the $\frac{n(n+1)}{2}$ components of the $n \times n$ symmetric matrix $\sigma$ (and similarly for $U_0$). The vector $P\colon\Omega \times (0,T) \to \R^d$ is a residual term (usually not present in Friedrichs systems) coming from the plastic strain rate $\dot p$, the source term $F\colon\Omega \times (0,T) \to \R^d$ comes from the body loads $f$, and the $d \times d$ symmetric matrices $A_1,\ldots, A_n$ are only depending  explicitly on the Lam\'e coefficients. The plasticity conditions \eqref{1.2}, \eqref{1.3} then correspond to \begin{equation}\label{Frisist2Intro}
U \in \mathbf{C},\qquad P\cdot (U-\xi) \geq 0 \quad\text{ for every }\xi \in \mathbf{C}\,,
\end{equation}
\end{subequations}
for a suitable closed and convex set $\mathbf{C}\subset \R^d$ containing $0_{\R^d}$ as an interior point. Therefore, \eqref{eqs:perplasintro} can be formally recast into  \eqref{eqs:FrisistIntro}, that we interpret as a Friedrichs' system with a convex constraint.

In the classical case of elastodynamics, we have $P=0$ and this formulation of the generalized wave equation describing the equation of motion in the framework of Friedrichs' systems has been introduced and studied in \cite{MH}.  Besides elastodynamics, many problems can be formulated in terms of Friedrichs' systems, for instance advection--reaction equations, advection--diffusion--reaction equations, the wave equation, the linearized Euler equations, the Maxwell equations in the so-called elliptic regime, and several other equations in mathematical physics (see e.g.\ \cite{BurErc16,AntBurCrnErc17}).

When the problem is set in the whole space $\R^n$, a notion of entropic solutions of Friedrichs' systems with convex constraints has been introduced in \cite{DepLagSeg11} and characterized in \cite{BabMifSeg16}  as the unique limit of  regularized solutions by means of a constraint penalization and a vanishing diffusion. This theory turns out to be well defined in Lebesgue type spaces. However, problems in continuum mechanics are often formulated in bounded  domains. For that reason, it is important to extend the previous approach to initial--boundary value problems. The difficulty is that the boundary conditions must be compatible with the evolution, especially in the hyperbolic framework where the initial conditions are propagated at finite speed through the characteristics and, possibly, up to the boundary. The starting point of our analysis (which was also the point of view of \cite{DMS,BM17}) are so-called dissipative boundary conditions introduced in \cite{F}. This notion of boundary conditions was then used in \cite{DMS} to formulate an $L^2$-theory for initial--boundary value  Friedrichs' system with an intrinsic formulation of boundary conditions, in the spirit of the $L^\infty$-theory of  boundary value scalar conservation laws \cite{MNRR, O} (see also the abstract approach to Friedrichs' systems in Hilbert spaces of \cite{ErnGueCap07, ErnGue06}, and \cite{AntBur10} on the relations between different types of boundary conditions). The advantage of this so-called entropic--dissipative formulation is that it does not involve any notion of trace of the solution on the boundary, which is meaningful when one is looking for solutions in Lebesgue type spaces.

This paper completes the analysis of a notion of \emph{entropic--dissipative solution} for initial--boundary value Friedrichs' systems with a convex constraint, obtained by a combination of \cite{DMS} and \cite{DepLagSeg11}, in the case of 
dynamic perfect plasticity. Such  solutions are functions $U \in L^2(\Omega \times (0,T);\mathbf C)$ such that for all constant vectors $\xi \in \mathbf C$ and all test functions $\varphi \in C^\infty_c( \R^n  \times (-\infty,T))$ with $\varphi \geq 0$,
\begin{subequations}
\begin{multline}\label{FSIntro}
 \int_0^T \int_{\Omega}   |U-\xi|^2\partial_t \varphi\dx\de t + \sum_{i=1}^n \int_0^T \int_{\Omega} A_i(U-\xi)\cdot (U-\xi)\partial_i \varphi\dx \de t \\
+ \int_\Omega |U_0-\xi|^2 \varphi(0)\dx + 2\int_0^T \int_{\Omega} F\cdot (U-\xi)\varphi\dx\de t 
 +\int_0^T \int_{\partial \Omega} M \xi^{+}\cdot \xi^{+} \varphi\,  \de \mathcal H^{n-1}\de t \ge 0.
\end{multline}
In the expression above, for $x \in \partial \Omega$, we denote by $\nu(x)$ the unit outer normal to $\partial \Omega$ at $x$,  $A_\nu(x):= \sum_{i=1}^n \nu_i(x) A_i$ and
$M(x) \in \mathbb{M}^{d}_{\rm sym}$ satisfies 
\begin{equation}\label{Mintro}
\begin{dcases}
\hspace{-1em}& M(x) \text{ is nonnegative,}\\
\hspace{-1em}& \Ker A_\nu(x) \subset \Ker M(x)\,,\\
\hspace{-1em}& \R^{d} = \Ker\big(A_\nu(x) - M(x)\big) + \Ker\big(A_\nu(x) + M(x)\big)\,,
\end{dcases}
\end{equation}
\end{subequations}
and $\xi^+$ is the projection of $\xi$ onto $\Ker\big(A_\nu(x) + M(x)\big) \cap \Ima A_\nu$. The formulation \eqref{FSIntro} is a combination of the unconstrained initial--boundary value problem introduced in \cite[Definition~1]{DMS}, and the constrained Cauchy problem in the full space introduced in \cite[Definition~2]{DepLagSeg11}. It has two main features: first, the fact the test vectors $\xi$ belong to the convex set $\mathbf C$ entirely contains the information of the flow rule; and second, the fact that the boundary condition is expressed through a boundary term not involving the trace of the solution which allows for a Lebesgue spaces type theory. The formulation \eqref{FSIntro} is referred to as an {\it entropic formulation} since it involves functions $U \mapsto |U-\xi|^2$ which are analogous to the Kru\v{z}kov entropies in \cite{Kru70} for scalar conservations laws, and to as a {\it dissipative formulation} by analogy of the loss of kinetic energy for Euler equations in \cite{Lio96} since, in the case $F=0$, the boundary condition implies the time decrease of $\|U(t)\|_{L^2(\Omega)}$.

In \cite{F} the admissible boundary conditions in a ``strong sense'' are given by
\begin{equation}\label{FriBC}
(A_\nu -M)U=0\quad\text{on }\partial \Omega\times(0,T)\,,
\end{equation} for a matrix $M$ satisfying \eqref{Mintro}. In the case where $A_\nu$ is invertible, as in plasticity, the  difference between \cite{F} and  \cite{DMS} on admissible matrices $M$ is that in the latter case, $M$ has to be symmetric since only its symmetric part is involved  in the formulation \eqref{FSIntro}. 

In the present case, the characterization of all admissible $M$ in the sense of \eqref{Mintro} requires a delicate abstract algebraic treatment. This is detailed in Paragraph~\ref{par:3.2.3} in dimension $n=3$, but it may be extended to any dimension. It turns out that, for a given $A_\nu$, the matrix $M$ is determined by a  $n\times n$ positive definite matrix $S$, in terms of which we reformulate \eqref{FriBC} as 
\begin{equation}\label{admBCVar}
S\dot{u} + \sigma\nu=0\quad\text{on }\partial \Omega\times(0,T)\,.
\end{equation}
Thanks to this characterization of the admissible boundary conditions, we can formulate \eqref{FSIntro} directly for the pair $(\dot{u}, \sigma)$ (instead of $U$), to specialize the notion of \emph{entropic--dissipative solution} to the present dynamic elasto--plasticity model \eqref{eqs:FrisistIntro} (see Definition~\ref{def:entropic}). 
The boundary condition \eqref{admBCVar} now formally complements the problem \eqref{eqs:perplasintro}. Unfortunately, the convex constraint \eqref{1.2} might be incompatible with \eqref{admBCVar} because $S \dot{u}$ may not belong to $K\nu$ on some part of $\partial \Omega$. Motivated by this observation, we consider an elasto--visco--plastic approximation, where in correspondence to a viscosity parameter $\varepsilon>0$, we add a damping term by replacing the stress $\sigma$ by $\sigma_\varepsilon + \varepsilon \E \dot{u}_\varepsilon$ in \eqref{1.4} and \eqref{admBCVar}, following a Kelvin-Voigt rheology, and the convex constraint \eqref{1.2}, \eqref{1.3} is penalized through a Perzyna visco--plastic approximation 
$$\dot{p}_\varepsilon=\frac{\sigma_\varepsilon- \mathrm{P}_{\mathbf K}(\sigma_\varepsilon)}{\varepsilon}\,.$$
In the spirit of \cite{DMS} and 
\cite{BM17}, we study the asymptotics of these approximating solutions (whose existence and uniqueness is standard, see e.g. Proposition~\ref{teo:apprvisc}) as $\varepsilon\to 0$.
Our main results are twofold:
\begin{enumerate}
\item In Theorem~\ref{teo:existence_variational}, we prove the existence and uniqueness of a \emph{variational solution to} \eqref{eqs:perplasintro} supplemented with the \emph{relaxed boundary conditions}
\begin{equation*}
P_{-\mathbf K\nu}(S\dot u)  + \sigma \nu = 0\quad \text{on } \dom \times (0,T)\,,
\end{equation*}
which is defined in Definition~\ref{def:variational}; 
\item In Theorems~\ref{thm:vardiss} and \ref{thm:dissvar}, we establish the equivalence between variational and entropic--dissipative solutions.
\end{enumerate}

Although the outcomes of our analysis are similar to those in the simplified setting of \cite{BM17},  we develop here completely  different tools due to the lack of counterparts for the regularity results employed 
in \cite{BM17}, specific to the anti--plane case, and of the setting therein. In particular, this allowed us both to simplify the argument and to avoid further regularity assumptions on $\Omega$ and $\mathbf{K}$, as we briefly discuss below.
 
\medskip 

One of the main features of plasticity models is the linear growth of the plastic dissipation (the support function $H$ of the elasticity set $\mathbf K$), which leads to strain concentration and to a possibly singular measure plastic strain $p$. From a functional analytic point of view, kinematically admissible displacements fields turn out to belong to the space $BD$ of functions of bounded deformation introduced in \cite{Suquet} for the study of perfect plasticity models (see also \cite{Tem85}). This pathological functional setting prevents to easily give a sense to the flow rule \eqref{1.3} since the stress $\sigma$ (which is usually square integrable as in classical elasticity) is not in duality with the plastic strain $p$ which is a singular measure. It is although possible to define a generalized stress--strain duality $[\sigma:\dot p]$ as in \cite{KohTem83,FraGia12,BabMor15} in $\Omega$, but the lack of control of the boundary terms prevents to extend this definition to $\overline \Omega$. A remedy to this obstacle in variational approaches to evolution problems is to express the flow rule as an energy--dissipation balance. It thus leads to study the limit of the energy--dissipation balance for approximated evolutions. In the present model, the relaxation of boundary conditions follows from the interplay between the stress constraint \eqref{1.2}, \eqref{1.3} and the unrelaxed boundary conditions \eqref{admBCVar} for the viscous approximations. 

Concerning the lower energy--dissipation inequality, the difficulty is related to the lack of lower semicontinuity of the term 
$$\int_0^t\int_\Omega H(\dot{p}_\varepsilon) \dx\ds + \int_0^t \int_\dom S \dot{u}_\varepsilon \cdot \dot{u}_\varepsilon \dh\ds$$
with respect to the weak convergence in the energy space. Using the approximate boundary condition, it can be rewritten as 
\begin{equation}\label{EBeps}
\int_0^t \int_\Omega H(\dot{p}_\varepsilon) \dx\ds + \frac12 \int_0^t \int_\dom S \dot{u}_\varepsilon \cdot \dot{u}_\varepsilon \dh \ds+ \frac12 \int_0^t \int_\dom  \hspace{-0.5em} S^{-1}(\sigma_\varepsilon\nu)  \cdot(\sigma_\varepsilon\nu) \dh\ds + o(1).
\end{equation}
Here, the approximate displacement $u_\varepsilon(t) \wstar u(t)$ weakly* in $BD(\Omega)$ with a trace which is bounded in $L^2(\partial\Omega;\Rn)$. Unfortunately, the trace operator in $BD(\Omega)$ is not continuous with respect to the weak* convergence in that space (see \cite{Bab}) so that  $u_\varepsilon(t) \weak w(t)$ weakly in $L^2(\partial\Omega;\Rn)$ for some $w(t)$ which differs from the trace of $u(t)$. Passing to the lower limit in \eqref{EBeps}, owing to standard lower semicontinuity results for convex functionals of measures, yields a lower bound of the form
\begin{multline*}
\int_0^t \int_\Omega H(\dot{p}) \dx\ds+ \int_0^t \int_\dom H((\dot w-\dot u)\odot \nu)\dh \ds\\
+ \frac12 \int_0^t \int_\dom S \dot{w} \cdot \dot{w} \dh \ds+ \frac12 \int_0^t \int_\dom  S^{-1}(\sigma\nu)  \cdot(\sigma\nu) \dh\ds,
\end{multline*}
where the first integral should be interpreted as a convex functional of a measure. This lower bound suggests to define the following infimal convolution boundary function
 \begin{equation}\label{defPsiintro}
 \psi(x,z)  =  \inf_{z' \in \Rn} \left\{ \frac{1}{2} S(x) z' \cdot z' + H((z'-z)\odot \nu(x)) \right\}\,,
\end{equation}
which leads to the lower bound
\begin{multline}\label{sciHintro}
\int_0^t \int_\Omega H(\dot{p}) \dx\ds+ \int_0^t\int_\dom  \psi(x,\dot{u}) \dh\ds+ \frac12 \int_0^t \int_\dom S^{-1}(\sigma\nu)  \cdot(\sigma\nu) \dh\ds\\
 \leq \liminf_{\varepsilon\to 0} \left\{ \int_0^t \int_\Omega H(\dot{p}_\varepsilon) \dx\ds +  \int_0^t \int_\dom S \dot{u}_\varepsilon \cdot \dot{u}_\varepsilon \dh \ds\right\}.
\end{multline}
In particular, this lower semicontinuity result improves the analogue result \cite[Proposition 5.1]{BM17} in the scalar case,  where the $\mathcal C^1$ regularity of the boundary was needed to use the optimal constant in the trace inequality in $BV$  (see \cite{AnzGia78}). The lower bound \eqref{sciHintro} is used to infer the lower inequality in the limit energy--dissipation balance \eqref{en_balance}. 

As for the upper energy--dissipation inequality, the formal argument leading to the missing energy inequality is very simple and essentially rests on both convexity inequalities:
$$H(\dot p(t))\geq \sigma(t):\dot p(t) \text{ in }\Omega,\quad\psi(x,\dot u(t))+\frac12 S^{-1}(\sigma(t)\nu)\cdot(\sigma(t)\nu) \geq -(\sigma(t)\nu)\cdot \dot u(t)\text{ on }\dom$$
together with an integration by part formula. In our case, the first inequality can be proven to be satisfied in the sense of measures, while the validity of the second one depends on  whether $\sigma(t)\nu \in K\nu$ on $\dom$. This property is known to be true in the case where $\mathbf K$ is cylindrical and bounded in the direction of deviatoric matrices (see \cite[Theorem 3.5]{FraGia12}). The generalization to any arbitrary convex set $\mathbf K$ depends on the possibility to approximate 
the space of admissible stresses
$$ \Big\{\sigma \in L^2(\Omega;\Mnn) \colon\;  \mathrm{div}\, \sigma \in L^2(\Omega;\Rn), \, \sigma\nu \in L^2(\partial\Omega; \Rn)\Big\}$$
by smooth functions. The main difficulty is related to the fact that the normal trace $\sigma\nu$ of $\sigma \in L^2(\Omega;\Mnn)$ with $\diver \sigma \in L^2(\Omega;\Rn)$ is canonically defined as an element of $H^{-\frac12}(\dom;\Rn)$, so that any approximation will have a normal trace strongly converging in that space instead of $L^2(\dom;\Rn)$ as required. The desired approximation ensuring all properties is known  from \cite{CD} when $\sigma$ is vector--valued (instead of matrix-valued), where it is proven the density of smooth functions in the space $\{z \in L^2(\Omega;\Rn) \colon \mathrm{div}\,z \in L^2(\Omega),\,z\cdot\nu \in L^2(\dom)\}$, which naturally arises in the study of Maxwell's equations. The proof relies on an abstract regularity result \cite{JerKen82} for nonhomogeneous Neumann problem which falls outside the scope of standard elliptic regularity. Unfortunately, up to our knowledge, there is no analogue of that result for the symmetric matrix-valued  case. Even assuming the validity of both convexity formulas, we then need to be able to apply a generalized integration by parts formula, which is the crucial tool in \cite{BM17} (see \cite{AnzAMPA}). In our case, again, the validity of such an integration by parts formula would again equire to prove a density result as above. We overcome the lack of integration by parts formula by proving an integral version of the formula \eqref{defPsiintro} to obtain $\int_\dom \psi(x,\dot u(t))\dh$ by global minimization over a set of smooth functions $w$. Then we extend the measure $\dot p(t)$, defined only on $\Omega$, up to the boundary by a boundary term of the form $(w-\dot u(t))\odot \nu \hn \mres \dom$ where $w$ is an arbitrary smooth function. We are now in a situation very similar to plasticity models with a Dirichlet boundary condition given by $w$. We can in particular use the results in \cite{BabMor15}, more precisely the generalized convexity inequality $H(\dot p(t)) \geq [\sigma(t):\dot p(t)]$ in the sense of measures in $\overline \Omega$, and the generalized integration by parts formula for fixed $w$ proven therein. The desired upper inequality is then obtained by minimizing with respect to $w$.

A last issue to overcome is the uniqueness of variational solutions. Again the formal argument rests on taking the difference of the equations of motion associated to two different solutions, multiply by the difference of the velocities and integrate by parts. It usually leads to a comparison principle between solutions, known as Kato inequality in the context of hyperbolic equations.
Once again, we are stucked by the nonvalidity of the generalized integration by parts formula. We use here a variational argument based on the strict convexity of the total energy, which, to our knowledge, was never used in such an evolutionary context.

Finally, we show that both notions of entropic--dissipative solutions and variational solutions are actually identical. The implication that variational solutions are entropic--dissipative ones is quite easy and relies on the visco--plastic approximation.  For fixed $\varepsilon$, the formal operations leading to \eqref{FSIntro} can be fully justified and then we pass to the limit as $\varepsilon \to 0$. The converse implication is more involved and is inspired from \cite{BM17} except for the derivation of the relaxed boundary condition. In \cite{BM17}, a further $C^2$ regularity property of $\Omega$ is used to infer that any vector field  $z\in L^\infty(\Omega;\Rn)$ with $\mathrm{div}\,z \in L^2(\Omega)$ has a normal trace $z\cdot\nu\in L^\infty(\partial\Omega)$ which can be recovered as the weak* limit in $L^\infty$ of $z\cdot\nu(x)$ as the point $x\in \Omega$ tends to the boundary (see \cite[Theorem~2.2]{CheFri99}). This results strongly uses the fact that $z$ belongs to $L^\infty(\Omega;\Rn)$, which is not the case anymore in our (matrix-valued) situation since $\sigma(t) \in \mathbf K$ and $\mathbf K$ is not bounded. In order to recover the relaxed boundary conditions, we use the entropic inequality \eqref{FSIntro} to ensure that the normal trace $\sigma(t)\nu$ in actually a bounded measure on $\dom$ (instead of just a distribution in $H^{-\frac12}(\dom;\Rn)$), which allows us to localize the inequality on $\dom$. Then, since the term $\sigma(t)\nu$ is multiplied by an arbitrary vector, and all other terms in this inequality are absolutely continuous with respect to $\hn\mres \dom$, we infer that the singular part of the measure $\sigma(t)\nu$ vanishes, and we obtain so that $\sigma\nu \in L^2(\dom;\Rn)$. Finally the relaxed boundary condition is obtained as a consequence of the energy--dissipation balance and the uniqueness result proved for variational solutions. This new approach allows us to weaken the $C^2$ regularity hypothesis done in \cite{BM17} into a Lipschitz regularity.

\medskip

The paper is organized as follows. In Section 2, we introduce the main notation and recall basic facts used throughout this work. Section 3 is devoted to describe the model of small strain dynamic perfect plasticity and its reformulations as a constrained Friedrichs' system. We introduce the notion of entropic solution and derive all admissible dissipative conditions. In Section 4, we prove the existence and uniqueness of variational solutions to the dynamic perfectly plastic model endowed with the dissipative boundary condition. Finally, in Section 5 we prove that both notions of variational and entropic--dissipative solutions are equivalent, and, as a byproduct, the well posedness of the entropic--dissipative formuation.

\section{Notation and preliminaries}

\subsection{Linear algebra}

If $a$ and $b \in \R^n$, we write $a \cdot b:=\sum_{i=1}^n a_i b_i$ for the Euclidean scalar product, and we denote the corresponding norm by $|a|:=\sqrt{a \cdot a}$.

For any $m,n\in \mathbb{N}$, we denote by $\mathbb M^{m \times n}$ the space of $m \times n$ matrices, and by $\mathbb{M}^{n}_{\mathrm{sym}}$, $\mathbb M^n_D$, $\mathbb{M}^{n, +}_{\mathrm{sym}}$, and $\mathbb{M}^{n}_{\mathrm{skew}}$ the spaces of $n {\times} n$ matrices which are symmetric, symmetric deviatoric, symmetric positive definite, and skew symmetric, respectively. Given a matrix $A \in \mathbb M^{m \times n}$, we denote by $\Ker A$ its kernel, by $\Ima A$ its range, and by $\rk A$ its rank. If $m=n$, then $\det A$ stands for the determinant of $A \in \mathbb M^n$ and $\tr A$ denotes its trace. The space $\mathbb M^n$ is endowed with the Frobenius scalar product $A:B:=\tr(A^T B)$ and with the corresponding Frobenius norm $|A|:=\sqrt{A:A}$. The identity and zero matrices in $\mathbb M^n$ are denoted by $\I_n$ and $\OO_n$. Finally, if $a \in \R^m$ and $b \in \R^n$, we denote their tensor product by $a \otimes b:=ab^T \in \mathbb M^{m \times n}$. If $m=n$, the symmetric tensor product between $a$ and $b \in \Rn$ is defined by $a \odot b=(a\otimes b+b \otimes a)/2 \in \Mnn$.

\subsection{Convex analysis}

We recall several definitions and basic facts from convex analysis (see \cite{Rock}). Let $(V,\langle\cdot,\cdot\rangle)$ be an Euclidean space $\psi:V \to [0,+\infty]$ be a proper function (i.e.\ not identically $+\infty$). The convex conjugate of $\psi$ is defined as
$$\psi^*(q):=\sup_{z \in V} \bigl\{ \langle q,z\rangle - \psi(z) \bigr\} \quad \text{ for all }q \in V,$$
which is a convex and lower semicontinuous function. In particular, if $C \subset V$ is a convex set, we define the indicator function $\I_C$ of $C$ as $\I_C:=0$ in $C$ and $+\infty$ otherwise. The convex conjugate $(\I_C)^*$ of $\I_C$ is called the support function of $C$.

If $\phi_1,\phi_2 : V \to [0,+\infty]$ are proper convex functions, then the infimal convolution of $\phi_1$ and $\phi_2$ is defined as
$$(\phi_1  \boxempty  \phi_2)(z):=\inf_{z' \in V} \bigl\{ \phi_1(z-z')+\phi_2(z') \bigr\}\,,$$
which turns out to be a convex function. It can be shown that
$$\phi_1 \boxempty \phi_2=(\phi_1^*+\phi_2^*)^*\,.$$

\subsection{Measures}

The Lebesgue measure in $\Rn$ is denoted by $\mathcal L^n$, and the $(n-1)$-dimensional Hausdorff measure by $\hn$. If $X \subset \Rn$ is a Borel set and $Y$ is an Euclidean space, we denote by $\mathcal M_b(X;Y)$ the space of $Y$-valued bounded Radon measures in $X$ endowed with the norm $\|\mu\|:=|\mu|(X)$, where $|\mu|$ is the variation of the measure $\mu$. If $Y=\R$ we simply write $\mathcal M_b(X)$ instead of $\mathcal M_b(X;\R)$, and we denote by $\mathcal M_b^+(X)$ the cone of all nonnegative bounded Radon measures.

If the relative topology of $X$ is locally compact, by Riesz representation theorem, $\mathcal M_b(X;Y)$ can be identified with the dual space of $C_0(X;Y)$, the space of continuous functions $\varphi:X \to Y$ such that $\{|\varphi|\geq \varepsilon\}$ is compact for every $\varepsilon>0$. The weak* topology of $\mathcal M_b(X;Y)$ is defined using this duality.

Let $\mu \in \mathcal M_b(X;Y)$ and $f:Y \to [0,+\infty]$ be a convex, positively one-homogeneous function.  Using the theory of convex functions of measures developed in \cite{GS,DT1,DT2}, we introduce the nonnegative Radon measure $f(\mu) \in \mathcal M_b^+(X)$, defined for every Borel set $A \subset X$ by
$$f(\mu)(A):=\sup\left\{\sum_{i=1}^{m} f(\mu(A_i)) \right\}$$
where the supremum is taken over all $m \in \mathbb N$ and all finite Borel partitions $\{A_1,\ldots,A_m\}$ of $A$. In addition, it can be established that
$$f(\mu)=f\left(\frac{\de \mu}{\de |\mu|}\right)|\mu|\,,$$
where $\frac{\de \mu}{\de |\mu|}$ stands for the Radon-Nikod\'ym derivative of $\mu$ with respect to $|\mu|$.

\subsection{Functional spaces}

We use standard notation for Lebesgue spaces ($L^p$) and Sobolev spaces ($W^{s,p}$ and $H^s=W^{s,2}$).

The space of functions of bounded deformation is defined by
$$BD(\Omega)=\{u \in L^1(\Omega;\R^n) : \; \E u \in \mathcal M_b(\Omega;\mathbb M^n_{\rm sym})\}\,,$$
where $\E u:=(Du+Du^T)/2$ stands for the distributional symmetric gradient of $u$. We recall (see \cite{Tem85,Bab}) that, if $\Omega$ has Lipschitz boundary, every function $u \in BD(\Omega)$ admits a trace, still denoted $u$, in $L^1(\dom;\Rn)$ such that the integration by parts formula holds: for all $\varphi \in C^1(\overline \Omega;\Mnn)$,
$$\int_{\partial\Omega} u\cdot (\varphi\nu)\dh=\int_\Omega \diver \varphi \cdot u\dx + \int_\Omega \varphi:\de \E u\,.$$
Note that the trace operator is continuous with respect to the strong convergence of $BD(\Omega)$ but not with respect to the weak* convergence in $BD(\Omega)$.

Let us define 
$$\Hdiv=\{\sigma \in L^2(\Omega;\mathbb M^n_{\rm sym}) \colon \diver \sigma \in L^2(\Omega;\R^n)\}\,.$$
If $\Omega$ has Lipschitz boundary, for any $\sigma \in H(\mathrm{div}, \Omega)$ we can define the normal trace $\sigma\nu$ as an element of $H^{-\frac12}(\partial \Omega;\R^n)$  (cf. e.g.\ \cite[Theorem~1.2, Chapter~1]{Tem85}) by setting
\begin{equation}\label{2911181910}
\langle \sigma \nu, \psi \rangle_{\dom}:= \int_\Omega \psi \cdot \diver \sigma\dx + \int_\Omega \sigma \colon \E \psi \dx\,.
\end{equation}
for every $\psi \in H^1(\Omega;\Rn)$. 

\medskip

Following \cite{KohTem83,FraGia12,BabMor15}, we define a generalized notion of stress/strain duality as follows.

\begin{definition}Let $\sigma \in \Hdiv$ and $p\in \mathcal M_b(\overline \Omega;\Mnn)$ be such that $\E u= e + p$ in $\Omega$ and $p=(w-u)\odot \nu \hn$ on $\partial \Omega$ for some $u\in BD(\Omega) \cap L^2(\Omega;\Rn)$, $e\in \Lnn$ and $w \in H^1(\Omega;\Rn)$. We define the first order distribution 
$[\sigma \colon p]\in \mathcal D'(\R^n)$ by
$$\langle [\sigma \colon p], \varphi \rangle := -\int_\Omega \varphi \sigma : (e -\E w)\dx- \int_\Omega (u-w) \cdot \diver \sigma \, \varphi \dx - \int_\Omega \sigma \colon \big((u-w) \odot \nabla \varphi\big) \dx$$
for all $\varphi \in C^\infty_c(\Rn)$.
\end{definition}

In the sequel we will be interested in stresses $\sigma \in \Hdiv$ taking values in a given closed and convex subset $\mathbf K$ of $\mathbb{M}^{n}_{\mathrm sym}$ containing $0$ in its interior. We define the set of all plastically admissible stresses by
$$\mathcal K(\Omega):=\{ \sigma \in \Hdiv \colon \, \sigma(x) \in \mathbf K \text{ for a.e.\ } x \in \Omega \}$$
which defines a closed and convex subset of the space $\Hdiv$.

If $\Omega$ has a Lipschitz boundary, $\sigma \in \mathcal K(\Omega)$ and $H(p)$ is a finite measure, where $H$ is the support function of $\mathbf K$, then, using an approximation result for $\sigma$ by smooth functions (see \cite[Lemma 2.3]{DMDSMor06}) as well as the integration by parts formula in $BD(\Omega)$ (see \cite[Theorem 3.2]{Bab}), we can show as in \cite[Section 2]{BabMor15} that $[\sigma \colon p]$ may be extended to a bounded Radon measure in $\overline \Omega$, i.e. $[\sigma \colon p]\in\mathcal M_b(\overline \Omega;\mathbb M^n_{\rm sym})$, with
\begin{equation}\label{eq:conv-ineq}
H(p) \geq [\sigma \colon p]\quad\text{in }\mathcal M^+_b(\overline \Omega)\,.
\end{equation}

Note also that if $\varphi \in C^1_c(\Omega)$, thanks to the integration by parts formula in $H^1(\Omega;\Rn)$ the expression of duality reduces to
\begin{equation}\label{1410191537}
\langle [\sigma \colon p], \varphi \rangle = -\int_\Omega \varphi \sigma \colon e \dx- \int_\Omega u\cdot \diver \sigma \, \varphi \dx - \int_\Omega \sigma \colon (u\odot \nabla \varphi) \dx\,.
\end{equation}

\section{Description of the model}

\subsection{Small strain dynamic perfect plasticity}

Let us consider a bounded open set $\Omega \subset \Rn$ (in dimension $n=2$ or $3$) 
which stands for the reference configuration of an elasto-plastic body. We will work in the framework of small strain elasto-plasticity where the natural kinematic variable is the displacement field $u :\Omega \times (0,T) \to \Rn$ (or the velocity field $v:=\dot u$). In small strain elasto-plasticity, the linearized strain tensor $\E u:=(\mathrm{D}u+\mathrm{D}u^T)/2$ decomposes additively in the following form:
$$
\E u=e+p,
$$
where $e:\Omega \times (0,T) \to \mathbb M^n_{\rm sym}$ is the elastic strain and $p:\Omega \times (0,T) \to \mathbb M^n_{\rm sym}$ the plastic strain. The elastic strain is related to the Cauchy stress tensor $\sigma:\Omega \times (0,T) \to\mathbb M^n_{\rm sym}$ by means of Hooke's law $\sigma:=\C e$, where $\C$ is the symmetric fourth order elasticity tensor. In the sequel we will assume that $\C$ is isotropic, which means that
$$\C e= \lambda(\tr e) \I_n + 2 \mu e\,,$$
where $ \mu>0$ and $2\mu + n \lambda >0$. In a dynamical framework and in the presence of an external body load $f:\Omega \times (0,T) \to \Rn$, the equation of motion writes
$$
\ddot{u} - \diver \sigma=f\quad\text{in }\Omega \times (0,T)\,.
$$
Perfect plasticity is characterized by the existence of a fixed closed and convex subset $\mathbf K$ of $\mathbb M^n_{\rm sym}$ with non empty interior, in which the Cauchy stress tensor is constrained to stay:
$$
\sigma \in \mathbf K\,.
$$
If $\sigma$ lies inside the interior of $\mathbf K$, the material behaves elastically, so that unloading will bring back the body into its initial configuration ($\dot p=0$). On the other hand, if $\sigma$ reaches the boundary of $\mathbf K$ (called the yield surface), a plastic flow may develop, so that, after unloading, a non-trivial permanent plastic strain $p$ will remain.  Its evolution is described by means of the flow rule and is expressed with the Prandtl-Reuss law
$$
\dot p \in N_{\mathbf K}(\sigma)\,,
$$ 
where $N_{\mathbf K}(\sigma)=\partial {\rm I}_{\mathbf K}(\sigma)$ is the normal cone to $\mathbf K$ at $\sigma$. Note that the flow rule can be equivalently written as 
$$\sigma\colon\dot p=H(\dot p)\,,$$
where $H(q)=\sup_{\tau \in \mathbf K}\tau\colon q$ is the support function of $\mathbf K$, which expresses Hill's principle of maximum plastic work.

\medskip

In summary, the problem of dynamical perfect plasticity consists in finding a displacement field
$u \colon \Omega \times (0,T) \to \Rn$, a Cauchy stress $\sigma \colon \Omega \times (0,T) \to \Mnn$, an elastic strain $e \colon \Omega \times (0,T) \to \Mnn$ and a plastic strain $p \colon \Omega \times (0,T) \to \Mnn$ such that in $\Omega \times (0,T)$ the following hold:
\begin{equation}\label{0110181923}
\begin{dcases}
\ddot{u} - \diver \sigma=f\,;\\
\E u= e+p\,;\\
\sigma= \C e \in \mathbf K\,;\\
\sigma\colon\dot p=H(\dot p)\,.
\end{dcases}
\end{equation}
To formulate the Cauchy problem, we add initial conditions of the form
\begin{equation}\label{eq:initial-cond}
(u(0),\dot u(0),e(0),p(0))=(u_0,v_0,e_0,p_0)\,.
\end{equation}
Eventually, in order to close the initial-boundary value problem, we have also to add suitable boundary conditions, whose derivation will be the object of the following subsections.

\subsection{Entropic formulation of the model}\label{sec:entrop}

The object of this subsection is to show that, formally, the system of dynamic elasto--plasticity can be reformulated in a framework similar to initial--boundary value conservations laws, provided the boundary conditions are compatible with the hyperbolic structure of the problem. We will first recast \eqref{0110181923} as a constrained Friedrichs' system as in \cite{DepLagSeg11}. Then, defining a suitable notion of so-called dissipative boundary conditions introduced in \cite{DMS} and taking its origin in the seminal paper \cite{F}, we will in turn reformulate the problem as infinitely many nonlinear inequalities involving entropies as for conservations laws. The advantage of this new formulation is that, as in \cite{O}, it involves a boundary term containing the information of the boundary condition without appealing to any notion of trace of the solution on the boundary. In particular, it allows one to define a weaker notion of solutions in a (Lebesgue) functional space which is larger than the energy space provided by variational methods.

\subsubsection{Dynamic perfect plasticity as a constrained Friedrichs' system}

We first perform formal operations on the system \eqref{0110181923} to write it in a fashion  similar to that of linear, symmetric hyperbolic systems called Friedrichs' systems. In this paragraph we actually show that the first three conditions in \eqref{0110181923} correspond to consider  a ``constrained Friedrichs' system'' of the form
$$\partial_t U + \sum_{i=1}^n A_i \partial_{i} U + P=F\,,$$
where each line of the unknown $U: \Omega \times (0,T) \to \R^d$ ($d=n+\frac{n(n+1)}{2}$) is a  linear combination of the $n$ components of the velocity $v=\dot u$ and of the $\frac{n(n+1)}{2}$ components of the (symmetric) Cauchy stress $\sigma$ (see also \cite{MH} in the case elastodynamics).  For $i\in\{1,\ldots,n\}$, the matrices $A_i \in \mathbb M^d_{\rm sym}$ are constants matrices whose coefficients only depend on the Lam\'e coefficients $\lambda$ and $\mu$, and the vectors $P$ and $F$ result from the plastic strain $\dot p$ and the source term $f$, respectively. For simplicity, we will perform our computations only in the case $n=3$.

\medskip

Let us define the following (column) vectors representing, respectively, the velocity, the Cauchy stress and the linearized strain:
\begin{equation}\label{0110182012}
\begin{cases}
V  :=  (\dot{u}_1 ,\dot{u}_2 , \dot{u}_3)^T\in \R^3\,, \\
\Sigma :=  (\sigma_{11} ,\sigma_{22} , \sigma_{33}, \sigma_{12} , \sigma_{13}  , \sigma_{23})^T \in \R^6\,, \\
Y :=(\partial_1 \dot{u}_1 ,\partial_2 \dot{u}_2 , \partial_3 \dot{u}_3 , \partial_1 \dot{u}_2 + \partial_2 \dot{u}_1 , \partial_1 \dot{u}_3 + \partial_3 \dot{u}_1 ,  \partial_2 \dot{u}_3 + \partial_3 \dot{u}_2 )^T \in \R^6\,.
\end{cases}
\end{equation}
We also introduce the vectors corresponding to the initial velocity and the initial stress:
$$V_0=((v_0)_1,(v_0)_2,(v_0)_3)^T \in \R^3, \quad \Sigma_0:= ((\sigma_0)_{11} ,(\sigma_0)_{22} , (\sigma_0)_{33}, (\sigma_0)_{12} , (\sigma_0)_{13}  , (\sigma_0)_{23})^T\in \R^6.$$

First of all, we have that
\begin{equation}\label{0210181655}
Y=-\sum_{i=1}^3 (B_i)^T \, \partial_i V\,,
\end{equation}
where the matrices $B_1$, $B_2$ and $B_3 \in \mathbb M^{3{\times}6}$ are given by
$$B_1=-
\begin{pmatrix}
1 & 0 & 0 & 0 & 0 & 0 \\
0 & 0 & 0 & 1 & 0 & 0 \\
0 & 0 & 0 & 0 & 1 & 0
\end{pmatrix},$$
$$B_2=-
\begin{pmatrix}
0 & 0 & 0 & 1 & 0 & 0 \\
0 & 1 & 0 & 0 & 0 & 0 \\
0 & 0 & 0 & 0 & 0 & 1
\end{pmatrix},$$
and $$B_3=-
\begin{pmatrix}
0 & 0 & 0 & 0 & 1 & 0 \\
0 & 0 & 0 & 0 & 0 & 1 \\
0 & 0 & 1 & 0 & 0 & 0
\end{pmatrix}.$$
Employing such matrices $B_i$ we express the equation of motion $\ddot u-{\rm div}\sigma=f$ as
\begin{equation}\label{0210181729}
\partial_t V+ \sum_{i=1}^3 B_i \partial_i \Sigma =f\,,
\end{equation}
and the constitutive law $ \dot \sigma=\mathbf A \dot e= \lambda \,(\diver \dot u) \, \I_3 + 2\mu \E \dot u-\lambda\, (\tr \dot p) \, \I_3 - 2\mu \dot p$ as
\begin{equation}\label{0110182011}
\partial_t \Sigma +CQ= C Y\,,
\end{equation}
where
$$Q:= (\dot p_{11} ,\dot p_{22} , \dot p_{33}, 2 \dot p_{12} ,2 \dot p_{13}  ,2 \dot p_{23})^T \in \R^6\,,$$
and
$$C:=
\begin{pmatrix}
\tilde C & \OO_{3} \\
\OO_{3} & \mu \,\I_{3}
\end{pmatrix}
\in \mathbb M^{6 ,+}_{\rm sym}
\,,
\qquad \tilde C := 
\begin{pmatrix}
\lambda+2\mu & \lambda & \lambda\\
\lambda & \lambda + 2\mu & \lambda\\
\lambda & \lambda & \lambda + 2\mu\\
\end{pmatrix} \in \mathbb M^{3,+}_{\rm sym}\,.
$$
Collecting \eqref{0210181655}, \eqref{0210181729} and \eqref{0110182011}, we obtain
\begin{equation}\label{0210181734}
\wt{A}_0 \partial_t \wt U + \sum_{i=1}^3 \wt{A}_i \partial_i \wt U + \wt Q=F, \quad \wt U \in \wt{\mathbf C}
\end{equation}
where
\begin{equation}\label{2210181057}
\wt U:=\begin{pmatrix}
V \\ \Sigma
\end{pmatrix} \in \R^9\,, \quad
\wt{A}_0:= 
\begin{pmatrix}
\I_3 &  \OO_{3}  & \OO_3 \\
\OO_{3} & \tilde C^{-1} & \OO_{3} \\ 
\OO_{3} & \OO_{3} & \frac{1}{\mu} \,\I_{3}
\end{pmatrix}\in \mathbb M^{9,+}_{\rm sym}
\,, \quad \wt{A}_i:= 
\begin{pmatrix}
\OO_{3} & B_i \\
B_i^T & \OO_{6}
\end{pmatrix}\in \mathbb M^{9,+}_{\rm sym}\, ,
\end{equation}
$$\wt Q=\begin{pmatrix}
 0_{\R^3}  \\  Q
\end{pmatrix}, \quad 
F= \begin{pmatrix}f \\  0_{\R^6}  \end{pmatrix},$$
and $\wt{\mathbf C} \subset \R^9$ is the set of all vectors $\wt{U}=(V,\Sigma)^T \in \R^9$ such that
$$V \in \R^3, \quad \begin{pmatrix}
\Sigma_1 & \Sigma_4 & \Sigma_5\\
\Sigma_4 & \Sigma_2 & \Sigma_6\\
\Sigma_5 & \Sigma_6 & \Sigma_3
\end{pmatrix} \in \mathbf K\,,$$
which is a closed and convex set containing $0_{\R^9}$ in its interior by the properties of $\mathbf K$.

\medskip

In order to get rid of the matrix $\wt A_0$ in front of the time derivative in \eqref{0210181734}, we set $A_i:=\wt{A}_0^{-\frac12} \wt{A}_i \wt{A}_0^{-\frac12}$ for all $i\in \{1, 2, 3\}$, and
$$ U:=\wt{A}_0^{\frac12} \wt U, \quad P:=\wt{A}_0^{-\frac12} \begin{pmatrix}
 0_{\R^3}  \\  Q \end{pmatrix}, \quad  U_0=\wt{A}_0^{\frac12} \begin{pmatrix} V_0 \\ \Sigma_0\end{pmatrix}.
$$
Using that $F=\wt{A}_0^{-\frac12} F$, we can recast \eqref{0210181734} in the form
\begin{equation}\label{0210181935}
\begin{cases}
\displaystyle \partial_t U + \sum_{i=1}^3 A_i \,\partial_i U +P= F\,,\\
U \in \mathbf C\,,\\
U(0)=U_0\,,
\end{cases}
\end{equation}
where $\mathbf C=\wt{A}_0^{\frac12}\wt{\mathbf C}$, which is a closed and convex subset of $\R^9$ containing $0_{\R^9}$ in its interior. Note that from the expression of $\wt{A}_0$, we have that
\begin{equation}\label{1611180941}
\wt{A}_0^{\pm \frac12}=\left(
\begin{array}{ccc}
\I_3 &  \OO_{3}  & \OO_{3} \\
\OO_{3} & \wt C^{\mp \frac{1}{2}}  & \OO_3 \\ 
\OO_{3} & \OO_{3} & \mu^{\mp \frac{1}{2}} \,\I_3
\end{array}
\right)
\end{equation}
and, for all $i \in \{1,2,3\}$, denoting by $B_i=(B'_i|B''_i)$ with $B'_i$ and $B''_i \in \mathbb M^3$, then the matrices $A_i$ may  be expressed as
$$A_i=
\begin{pmatrix}
\OO_3 & B'_i \wt C^{\frac12} & \sqrt\mu B''_i\\
 \wt C^{\frac12}(B'_i)^T & \OO_3 & \OO_3\\
 \sqrt\mu (B''_i)^T & \OO_3 & \OO_3
\end{pmatrix}.$$

\begin{remark}\label{rem:0210182015}
We observe that $\tilde C$ may be diagonalized as
$$\tilde C= R D R^{T}\,,$$
where $R \in SO(3)$ and $D$ are given by
$$R=
\begin{pmatrix}
\frac{1}{\sqrt 2} & \frac{1}{\sqrt 6} & \frac{1}{\sqrt 3}\\
-\frac{1}{\sqrt 2} &  \frac{1}{\sqrt 6} &\frac{1}{\sqrt 3}\\
0 & -\frac{2}{\sqrt 6} & \frac{1}{\sqrt 3}
\end{pmatrix}, \qquad D=\mathrm{diag}(2\mu, 2\mu, 2\mu + 3\lambda)\,.$$
Thus, for any power $\gamma \in \R$, a direct computation gives that 
\begin{equation*}
\wt C^\gamma=R D^\gamma R^T= \begin{pmatrix}
\alpha_\gamma & \beta_\gamma & \beta_\gamma  \\
\beta_\gamma &  \alpha_\gamma & \beta_\gamma \\
\beta_\gamma & \beta_\gamma &  \alpha_\gamma  \\
\end{pmatrix} \,, 
\end{equation*}
for
$$\alpha_\gamma= \frac23 (2 \mu)^\gamma + \frac{1}{3} (2\mu + 3 \lambda)^\gamma\,, \qquad \beta_\gamma= -\frac{1}{3} (2 \mu)^\gamma + \frac{1}{3} (2\mu + 3 \lambda)^\gamma\,.$$
\end{remark}

We now employ the fourth condition in \eqref{0110181923} to infer an inequality useful in the subsequent analysis. We show that
\begin{equation}\label{0211191324}
P \cdot (U-\xi)\geq 0 \quad \text{for every } \xi \in \mathbf{C}\,.
\end{equation}
In fact, writing $\xi=\wt{A}_0^{\frac12} \wt\xi$ for some $\wt\xi=(z,\mathcal T)^T \in \wt{\mathbf C}$ with
$$z \in \R^3, \quad
\tau:= \begin{pmatrix}
\mathcal T_1 & \mathcal  T_4 &  \mathcal T_5\\
\mathcal  T_4 & \mathcal  T_2 & \mathcal  T_6\\
\mathcal  T_5 & \mathcal T_6 & \mathcal  T_3
\end{pmatrix} \in \mathbf K,$$ we get, using the symmetry of $\wt{A}_0^{\frac12}$, that $P\cdot (U-\xi)=(\wt{A}_0^{-\frac12}\wt Q) \cdot (\wt{A}_0^{\frac12}\wt U-\wt{A}_0^{\frac12}\wt \xi)=\wt Q\cdot (\wt U-\wt \xi)=Q\cdot (\Sigma-\mathcal T)$.
Then, by  the fourth condition in \eqref{0110181923}, we have that $\sigma  \colon  p \geq \tau  \colon p$,  and since $\tau \in \mathbf K$, we get that $Q\cdot (\Sigma-\mathcal T)=\dot p \colon (\sigma-\tau) \geq 0$, so \eqref{0211191324} holds true.

\subsubsection{Entropic formulation}

In the spirit of entropic formulations for conservations laws, we are going to reformulate \eqref{0110181923}, or \eqref{0210181935} and \eqref{0211191324}, as infinitely many nonlinear inequalities involving entropies well adapted to the system. At this stage we need to impose suitable admissible boundary conditions following an approach introduced in \cite{DMS,BM17} and in  the pioneering work \cite{F}. We are formally interested in dissipative boundary conditions of the form
\begin{equation}\label{0210181954}
(A_\nu-M)U=0 \quad \text{on }\dom \times (0,T)\,,
\end{equation}
where (denoting by $\nu(x)$ the unit outer normal to $\dom$ at $x \in \dom$)
$$A_\nu(x):= \sum_{i=1}^3 \nu_i(x) A_i \in \mathbb{M}^{9}_{\rm sym}$$
and, for each $x \in \partial \Omega$,  $M(x) \in \mathbb{M}^{9}_{\rm sym}$ satisfies
\begin{equation}\label{0210182002}
\begin{dcases}
\hspace{-1em}& M(x) \text{ is nonnegative,}\\
\hspace{-1em}& \Ker A_\nu(x) \subset \Ker M(x)\,,\\
\hspace{-1em}& \R^{9} = \Ker\big(A_\nu(x) - M(x)\big) + \Ker\big(A_\nu(x) + M(x)\big)\,.
\end{dcases}
\end{equation}
 
Taking the scalar product of the first equation in \eqref{0210181935} with $U$ yields
\begin{equation}\label{eq:1}
\frac12 \partial_t |U|^2+\frac12 \sum_{i=1}^3\partial_i (A_i U \cdot U) + P \cdot U = F\cdot U
\end{equation}
while, for every constant vector $\xi \in \mathbf C$, taking the  scalar product of \eqref{0210181935} with $\xi$  leads to
\begin{equation}\label{eq:2}
 \partial_t (U \cdot \xi)+ \sum_{i=1}^3\partial_i (A_i U \cdot \xi) + P \cdot \xi = F\cdot \xi.
\end{equation}
Substracting \eqref{eq:1} and \eqref{eq:2}, and using \eqref{0211191324}, leads to
$$\partial_t|U-\xi|^2 +\sum_{i=1}^3\partial_i \big( A_i(U-\xi)\cdot (U-\xi)\big) \leq 2 F \cdot (U-\xi).$$
We now multiply the above inequality
by a test function $\varphi \in C^\infty_c(\R^3 \times \R)$ with $\varphi \geq 0$, and integrate by parts to get that
\begin{equation}\label{Fried2}
\begin{split}
 &\int_{\R^+} \int_\Omega |U-\xi|^2 \partial_t \varphi\dx\de t + \sum_{i=1}^3\int_{\R^+} \int_\Omega A_i (U-\xi) \cdot (U-\xi)\partial_i \varphi \dx\de t 
+ \int_\Omega |U_0-\xi|^2\varphi(0)\dx\\
& \hspace{1em}+2 \int_{\R^+}\int_\Omega  F \cdot (U-\xi)\varphi \dx\de t- \int_{\R^+} \int_{\partial \Omega} A_\nu (U-\xi) \cdot (U-\xi) \varphi  \, \de \mathcal H^2\de t \geq 0\,.
\end{split}
\end{equation}

According to  \cite[Lemma 1]{DMS}, we have that
$$\R^9 = \Ker A_\nu  \oplus   \big(\Ker(A_{\nu}- {M})\cap  \Ima A_{\nu} \big)   \oplus  \big( \Ker(A_{\nu}+ {M}) \cap \Ima A_{\nu} \big).$$
For each $\xi\in \R^9$, we denote by $\xi^\pm$ the projection of $\xi$ onto $\Ker(A_{\nu}\pm {M})\cap \Ima A_{\nu} $. Using the (strong) boundary condition \eqref{0210181954}, we have that $U \in \Ker(A_\nu-M)$, or still $U^+=0$. The algebraic conditions \eqref{0210182002} together with \cite[Lemma 1]{DMS} thus yield
\begin{multline*}
A_\nu (U-\xi)\cdot  (U-\xi) =-M (U-\xi)^+\cdot  (U-\xi)^+ +M (U-\xi)^-\cdot  (U-\xi)^-\\
=-M\xi^+\cdot \xi^+ +M (U-\xi)^-\cdot  (U-\xi)^- \geq -M\xi^+\cdot \xi^+.
\end{multline*}
Inserting in \eqref{Fried2}, we get that for all constant vectors $\xi \in \mathbf C$ and all test functions $\varphi \in C^\infty_c( \R^3  \times \R)$ with $\varphi \geq 0$, it holds that
\begin{equation}\label{FS}
\begin{split}
& \int_{\R^+} \int_{\Omega}   |U-\xi|^2\partial_t \varphi\dx\de t + \sum_{i=1}^3 \int_{\R^+} \int_{\Omega} A_i(U-\xi)\cdot (U-\xi)\partial_i \varphi\dx \de t \\
 & \hspace{1em} + \int_\Omega |U_0-\xi|^2 \varphi(0)\dx + 2\int_{\R^+} \int_{\Omega} F\cdot (U-\xi)\varphi\dx\de t 
 +\int_{\R^+} \int_{\partial \Omega} M \xi^{+}\cdot \xi^{+} \varphi\,  \de \mathcal H^2 \de t \ge 0\,.
 \end{split}
\end{equation}
The previous family of inequalities defines a notion of entropic solutions $U \in L^2(\Omega \times \R^+;\mathbf C)$ to the dynamical elasto--plastic problem in the spirit of constrained Friedrichs' systems in the whole space as in \cite{DepLagSeg11}. Note that in the presence of a boundary, our formulation is meaningful within a $L^2$ theory of Friedrichs' systems (as suggested by \eqref{FS}) since the trace of $U$ on the boundary $\partial \Omega \times \R^+$ is not involved in this definition (see also \cite{O,MNRR} for an $L^\infty$-theory of initial--boundary value conservation laws).

\subsubsection{Characterization of all admissible boundary matrices}\label{par:3.2.3}

In order to derive the ``dissipative'' boundary conditions involved in our model, we need to  characterize all admissible boundary matrices $M$. Since the problem is local, let $\nu \in \mathbb S^2$ be fixed, and consider the matrix $A_\nu$ given by
$$A_\nu=\sum_{i=1}^3 \nu_i A_i.$$
A direct computation shows that
\begin{equation}\label{0210181955}
A_\nu =\begin{pmatrix}
\OO_3 &  A_\nu'  & A_\nu'' \\
(A_\nu')^T & \OO_3 & \OO_{3} \\ 
(A_\nu'')^T & \OO_{3} & \OO_{3}
\end{pmatrix}
\end{equation}
where, according to Remark \ref{rem:0210182015},
\begin{equation}\label{Aprime}
A_\nu'=B'_\nu \wt C^{\frac12} =-
\begin{pmatrix}
\nu_1\, \alpha & \nu_1\, \beta & \nu_1\, \beta\\
\nu_2\, \beta &  \nu_2\, \alpha& \nu_2\, \beta \\
\nu_3\, \beta & \nu_3\, \beta &  \nu_3\, \alpha
\end{pmatrix}\in \mathbb M^3\,, 
\quad
A_\nu''=\sqrt\mu B''_\nu=-\sqrt{\mu}
\begin{pmatrix}
\nu_2 & \nu_3 & 0 \\
\nu_1 & 0 & \nu_3\\
 0 & \nu_1 & \nu_2
\end{pmatrix} \in \mathbb M^3.
\end{equation}
with 
$$B'_\nu=\sum_{i=1}^3 B'_i \nu_i
=-\begin{pmatrix}
\nu_1 & 0 & 0 \\
0 & \nu_2 & 0\\
0 & 0 & \nu_3
\end{pmatrix}
, \quad
B''_\nu=\sum_{i=1}^3 B''_i \nu_i
=-\begin{pmatrix}
\nu_2 & \nu_3 & 0\\
\nu_1 & 0 & \nu_3\\
0 & \nu_1 & \nu_2
\end{pmatrix}
$$
and
$$\alpha=\alpha_{\frac12}= \frac23 \sqrt{2 \mu} + \frac{1}{3} \sqrt{2\mu + 3 \lambda}\,, \qquad \beta=\beta_{\frac12}= -\frac{1}{3} \sqrt{2 \mu}+ \frac{1}{3} \sqrt{2\mu + 3 \lambda}\,.$$

\medskip

Let us start by proving some dimensional properties of $A_\nu$.

\begin{lemma}
For all $\nu \in \mathbb S^2$, we have
\begin{equation}\label{1311181051}
\rk(A_\nu' | A_\nu'')=3
\end{equation}
and
\begin{equation}\label{0210182358}
\rk A_\nu = 6\,,\qquad \dime \Ker A_\nu =3.
\end{equation}
\end{lemma}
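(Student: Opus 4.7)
The plan is to reduce everything to a rank computation for the compact $3\times 6$ block $M_\nu := (A_\nu'\,|\,A_\nu'')$, and then exploit the special antidiagonal structure of $A_\nu$ displayed in \eqref{0210181955}.

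First I would prove \eqref{1311181051}. The key observation is that $M_\nu$ factors as $M_\nu = B_\nu\,D$, where $B_\nu := (B_\nu'\,|\,B_\nu'')$ is the $3\times 6$ matrix collecting the normal components introduced just after \eqref{Aprime}, and
\[
D := \begin{pmatrix} \tilde C^{1/2} & \OO_3 \\ \OO_3 & \sqrt{\mu}\,\I_3 \end{pmatrix} \in \mathbb{M}^{6,+}_{\rm sym}
\]
is invertible because $\tilde C$ is positive definite (Remark~\ref{rem:0210182015}) and $\mu>0$. Hence $\rk M_\nu = \rk B_\nu$, and it suffices to show $\rk B_\nu = 3$. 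This is a direct inspection: for any $\nu\in\mathbb{S}^2$ at least one component, say $\nu_i\neq 0$, and picking the column of $B_\nu'$ of index $i$ together with the two columns of $B_\nu''$ carrying $\nu_i$ produces an explicit $3\times 3$ submatrix whose determinant is $\pm \nu_i^3 \neq 0$. (For instance if $\nu_1\neq 0$, columns $1,4,5$ give a triangular matrix with diagonal $(-\nu_1,-\nu_1,-\nu_1)$.) This gives \eqref{1311181051}.

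For \eqref{0210182358} I would use the block form
\[
A_\nu = \begin{pmatrix} \OO_3 & M_\nu \\ M_\nu^T & \OO_6 \end{pmatrix},
\]
and argue directly on image and kernel. On the one hand, $\Ima A_\nu \subset (\Ima M_\nu)\times (\Ima M_\nu^T)$, and conversely any vector of the form $(M_\nu y,\,M_\nu^T x)$ with $x\in\R^3$, $y\in\R^6$ lies in $\Ima A_\nu$ (apply $A_\nu$ to $(x,y)$); hence $\rk A_\nu = \rk M_\nu + \rk M_\nu^T = 2\rk M_\nu$. Combined with the previous step this gives $\rk A_\nu = 6$ and consequently $\dime \Ker A_\nu = 9-6 = 3$.

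No step is genuinely hard; the only subtle point is that one must not try to exhibit a uniform $3\times 3$ nonsingular submatrix of $B_\nu$ valid for all $\nu$ — the natural choice depends on which component of $\nu$ is nonzero. Splitting into the three (symmetric) cases $\nu_i\neq 0$, or equivalently noting that $\{\nu_1,\nu_2,\nu_3\}$ cannot all vanish on $\mathbb{S}^2$, bypasses this minor annoyance and the rest is pure linear algebra.
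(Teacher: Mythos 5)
Your proof is correct, and the route to \eqref{1311181051} is genuinely different from (and arguably cleaner than) the paper's. The paper computes $\det(A'_\nu)=-\nu_1\nu_2\nu_3(2\mu)\sqrt{3\lambda+2\mu}$ directly, which settles the case $\nu_1\nu_2\nu_3\neq 0$, and then treats the case of a vanishing component separately by invoking $\alpha\neq\beta$, $\alpha^2\neq\beta^2$ to argue about linear independence of selected rows of $A'_\nu$ and $A''_\nu$. You instead factor out the block diagonal matrix $D=\operatorname{diag}(\tilde C^{1/2},\sqrt{\mu}\,\I_3)$, which is invertible by $(H_2)$, reducing to $\rk(B'_\nu\,|\,B''_\nu)=3$; here the Lam\'e coefficients have disappeared entirely, and a $3\times 3$ minor equal to $\pm\nu_i^3$ is available for whichever $\nu_i\neq 0$. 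This decouples the elasticity parameters (which only need $D$ to be invertible) from the geometry of $\nu$, avoids the paper's case split, and is the more conceptual argument. For \eqref{0210182358} both you and the paper use the block-antidiagonal structure of $A_\nu$; the paper leaves "the Rank Theorem" implicit, while you make it explicit via $\Ima A_\nu=\Ima M_\nu\times\Ima M_\nu^T$ and hence $\rk A_\nu=2\rk M_\nu=6$, which is exactly what is meant.

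One tiny stylistic caveat: the "three symmetric cases" are not related by a literal permutation symmetry of $B_\nu$ (the columns of $B''_\nu$ mix indices), so the chosen columns are $\{1,4,5\}$, $\{2,4,6\}$, $\{3,5,6\}$ respectively and the resulting $3\times 3$ blocks are not all triangular; but each does have determinant $\pm\nu_i^3$, so the claim stands.
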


\begin{proof}
We first note that
\begin{equation}\label{1310191931}
\det(A'_\nu)=-\nu_1\nu_2\nu_3(2\mu)\sqrt{3\lambda+2\mu}\,.
\end{equation}
Then, \eqref{1310191931} gives \eqref{1311181051} if $\nu_i \neq 0$ for all $i \in \{1,2,3\}$.  Otherwise, assume that $\nu_i=0$ for some $i \in \{1,2,3\}$. Note that since $\nu \in \mathbb S^2$, then $\nu$ has at least one nonzero component.  Since $\mu>0$ and $3\lambda+2\mu>0$, then we have $\alpha\neq\beta$, $\alpha^2 \neq \beta^2$. Thus the nonzero lines of $A'_\nu$ are linearly independent, while the lines of $A''_\nu$ corresponding to the zero lines of $A'_\nu$ are linearly independent as well, which implies \eqref{1311181051} also in that case. 

Finally, \eqref{0210182358} is a consequence of the structure of the matrix $A_\nu$ in \eqref{0210181955} and the Rank Theorem.
\end{proof}

According to \eqref{1311181051}, in correspondence to $\nu \in \mathbb{S}^2$ there exists a matrix $C_\nu \in \mathbb M^9_{\rm sym}$ such that the fourth, fifth and sixth columns of $C_\nu A_\nu$ are linearly independent. We denote by

\begin{equation}\label{1310191900}
\widehat{A}_\nu:=C_\nu A_\nu C_\nu^T= \begin{pmatrix}
\OO_3 &  \widehat{A}_\nu'  & \widehat{A}_\nu'' \\
(\widehat{A}_\nu')^T & \OO_3 & \OO_{3} \\ 
(\widehat{A}_\nu'')^T & \OO_{3} & \OO_{3}
\end{pmatrix},
\end{equation}
with $\widehat A'_\nu$, and $\widehat A''_\nu \in \mathbb M^3$, 
which operates a suitable change of both lines and columns of $A_\nu$  in such a way that
\begin{equation}\label{1310191933}
\rk \widehat{A}_\nu' =3\,.
\end{equation} 
Notice that we may assume that $C_\nu={\rm I}_9$ (i.e. $A_\nu= \widehat A_\nu$), if and only if $\nu_i \neq 0$ for all $i\in\{1,2,3\}$.

\begin{remark}\label{rem:Cnu}
Since $C_\nu^T$ and $C_\nu^{-T}$ are permutation matrices which leave unchanged the first three components of a vector, we deduce that for any $U \in \R^9$,
$$(C_\nu^T U)_i=U_i=(C_\nu^{-T} U)_i \quad \text{for all }i \in \{1,2,3\}.$$
\end{remark}

\medskip

Let us now pass to the characterization of the admissible boundary matrices, fixing $x \in \partial \Omega$ in \eqref{0210182002}, so that we are given $M \in \mathbb M^9_{\rm sym}$ satisfying
\begin{equation}\label{eq:M}
\begin{dcases}
\hspace{-1em}& M \text{ is nonnegative,}\\
\hspace{-1em}& \Ker A_\nu \subset \Ker M\,,\\
\hspace{-1em}& \R^{9} = \Ker(A_\nu - M) + \Ker(A_\nu + M)\,.
\end{dcases}
\end{equation}
We write
\begin{equation}\label{0310180617}
M=\left(
\begin{array}{ccc}
M_1 &  M_2  & M_2' \\
(M_2)^T & M_3 & M_3' \\ 
(M_2')^T & (M_3')^T & M_3''
\end{array}
\right)\,,
\end{equation}
with $M_1$, $M_3$, $M''_3 \in \mathbb M^3_{\rm sym}$, $M_2$, $M_2'$ and $M'_3 \in \mathbb{M}^{3}$.  Moreover, let 
\begin{equation}\label{1310191911}
\widehat{M}:=C_\nu M C_\nu^T= \left(
\begin{array}{ccc}
\widehat{M}_1 &  \widehat{M}_2  & \widehat{M}_2' \\
(\widehat{M}_2)^T & \widehat{M}_3 & \widehat{M}_3' \\ 
(\widehat{M}_2')^T & (\widehat{M}_3')^T & \widehat{M}_3''
\end{array}
\right)\,,
\end{equation}
with $\widehat{M}_1$, $\widehat{M}_3$, $\widehat{M}''_3 \in \mathbb M^3_{\rm sym}$, $\widehat{M}_2$, $\widehat{M}_2'$ and $\widehat{M}'_3 \in \mathbb{M}^{3}$. Notice that $\widehat{M_1}=M_1$, by the properties of $C_\nu$.

\medskip

We have the following characterization of all admissible boundary matrices $M$.
\begin{theorem}\label{teo:1511181854}
Let $\nu \in \mathbb S^2$ and $A_\nu$,  $\widehat{A}_\nu$ be the corresponding matrices defined by \eqref{0210181955}  and \eqref{1310191900}.  A matrix $M \in \mathbb M^9_{\rm sym}$ satisfies \eqref{eq:M}, if and only if (recall \eqref{1310191911})
\begin{subequations}\label{eqs:1310191917}
\begin{equation}\label{1511181908}
\widehat{M}_1,\, \widehat{M}_3 \text{ are positive definite, }
\end{equation}
\begin{equation}\label{eq:skew}
(\widehat{A}'_\nu)^T\, \widehat{M}_1^{-1} \widehat{M}_2\in \mathbb M^3_{\rm skew}, \quad \widehat{A}'_\nu\, \widehat{M}_3^{-1} (\widehat{M}_2)^T \in \mathbb M^3_{\rm skew},
\end{equation}
\begin{equation}\label{eq:M1M3}
\widehat{M}_3 =  (\widehat{A}'_\nu)^T \, \widehat{M}_1^{-1} \widehat{A}'_\nu + (\widehat{M}_2)^T \widehat{M}_1^{-1} \widehat{M}_2,\quad
\widehat{M}_1 =  \widehat{A}'_\nu \, \widehat{M}_3^{-1} (\widehat{A}'_\nu)^T +  \widehat{M}_2 \widehat{M}_3^{-1} (\widehat{M}_2)^T,
\end{equation}
\begin{equation}\label{eq:M2bis}
 \widehat{M}_2'=  \widehat{M}_2  (\widehat{A}'_\nu)^{-1}\widehat{A}''_\nu,\quad \widehat{M}_3'=  \widehat{M}_3  (\widehat{A}'_\nu)^{-1}\widehat{A}''_\nu,\quad \widehat{M}_3''=  (\widehat{A}''_\nu)^T(\widehat{A}'_\nu)^{-T}\widehat{M}_3  (\widehat{A}'_\nu)^{-1}\widehat{A}''_\nu\,,
\end{equation}
and 
\begin{equation}\label{1511181747}
\rk (A_\nu + M) = \rk (A_\nu - M) =3\,.
\end{equation}
\end{subequations}
\end{theorem}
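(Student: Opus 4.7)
My strategy is to reduce, by the change of basis $C_\nu$ (orthogonal by Remark~\ref{rem:Cnu}), to the hatted setting in which $\widehat{A}'_\nu$ is invertible. Since $\widehat M = C_\nu M C_\nu^T$ and $\widehat A_\nu = C_\nu A_\nu C_\nu^T$ preserve simultaneously nonnegativity, kernels and ranks, \eqref{eq:M} for $M$ is equivalent to its hatted counterpart. Setting $Y := (\widehat{A}'_\nu)^{-1} \widehat{A}''_\nu$, so that $\widehat{A}''_\nu = \widehat{A}'_\nu Y$, this single identity will drive the whole analysis.

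First, the kernel of $\widehat A_\nu$ is computed explicitly by using \eqref{1310191900}, \eqref{1311181051} and invertibility of $\widehat A'_\nu$: a vector $(V, \Sigma', \Sigma'')$ lies in $\Ker \widehat A_\nu$ iff $V=0$ and $\Sigma' = -Y\Sigma''$. Imposing that $\widehat M$ annihilates every such vector and matching blocks gives precisely \eqref{eq:M2bis} as the necessary and sufficient form of condition (ii). Once \eqref{eq:M2bis} is in force, the third block column of $\widehat A_\nu \pm \widehat M$ equals the second block column multiplied on the right by $Y$ (symmetrically for rows), so block column/row operations collapse
$$\rk(\widehat A_\nu \pm \widehat M) = \rk \begin{pmatrix} \pm \widehat M_1 & \widehat A'_\nu \pm \widehat M_2 \\ (\widehat A'_\nu \pm \widehat M_2)^T & \pm \widehat M_3 \end{pmatrix},$$
while the same identity \eqref{eq:M2bis} yields the factorization
$$\widehat M = \begin{pmatrix} \I_3 & 0 \\ 0 & \I_3 \\ 0 & Y^T \end{pmatrix} \begin{pmatrix} \widehat M_1 & \widehat M_2 \\ \widehat M_2^T & \widehat M_3 \end{pmatrix} \begin{pmatrix} \I_3 & 0 & 0 \\ 0 & \I_3 & Y \end{pmatrix},$$
so that $\widehat M \ge 0$ is equivalent to positivity of the $6\times 6$ inner block $N$.

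Assuming $\widehat M_1$ invertible, imposing rank $3$ to the displayed $6\times 6$ matrix for both signs forces, via the Schur complement of $\pm \widehat M_1$ (and then averaging and subtracting the two resulting identities), the first equations of \eqref{eq:M1M3} and \eqref{eq:skew}; the symmetric analysis relative to $\widehat M_3$ yields the second ones. Conversely, these identities make the Schur complement of $\widehat M_1$ in $N$ equal to the positive definite matrix $(\widehat A'_\nu)^T \widehat M_1^{-1} \widehat A'_\nu$, so by \eqref{1511181908} the inner block $N$ is positive definite. This simultaneously gives (i) and the rank identity \eqref{1511181747}; then (iii) follows from \eqref{1511181747} by the dimension count $\dim \Ker(A_\nu{-}M) + \dim \Ker(A_\nu{+}M) - \dim[\Ker(A_\nu{-}M) \cap \Ker(A_\nu{+}M)] = 6+6-3 = 9$, the intersection reducing to $\Ker A_\nu$ thanks to (i)--(ii).

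The main obstacle I expect lies in the converse direction, and concerns the simultaneous derivation of the strict positivity \eqref{1511181908} and the rank identity \eqref{1511181747} from (i)--(iii). For \eqref{1511181747} I plan to use a signature argument: $A_\nu$ restricted to $\Ima A_\nu$ has signature $(3,3)$ by the block form of $\widehat A_\nu$, while $M\ge 0$ forces $A_\nu u\cdot u \le 0$ on $\Ker(A_\nu{+}M)\cap\Ima A_\nu$ and $\ge 0$ on its $-$ counterpart; Sylvester's inertia bounds each intersection by $3$, and the direct-sum decomposition of \cite[Lemma~1]{DMS} forces both to have dimension exactly $3$. The strict positivity \eqref{1511181908} is the most delicate point, since (i) alone only yields semidefiniteness: one has to combine $N\ge 0$ (which forces $\Ker \widehat M_1 \subset \Ker \widehat M_2^T$, and hence $(v,0,0)\in \Ker \widehat M$ for every $v\in\Ker \widehat M_1$) with the rank-$3$ constraint to exclude any degeneracy of $\widehat M_1$, and symmetrically for $\widehat M_3$. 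This is the genuinely algebraic obstruction around which the whole proof must be organised.
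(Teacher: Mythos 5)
Your proposal takes a route that partly differs from the paper's, and the converse direction is fine: the Schur-complement check that the inner $6\times6$ block $N$ is positive definite is a cleaner substitute for the paper's explicit completing-the-square computation showing $\widehat M\geq 0$, and the dimension count giving condition (iii) is the same. Your Sylvester-inertia derivation of \eqref{1511181747} in the necessary direction — $A_\nu|_{\Ima A_\nu}$ has signature $(3,3)$, $M\geq 0$ forces $A_\nu$ to be sign-semidefinite on $\Ker(A_\nu\pm M)\cap\Ima A_\nu$, inertia caps each dimension at $3$, and \cite[Lemma~1]{DMS} pins them at exactly $3$ — is also a legitimate and rather elegant alternative to the paper's counting argument; and your Schur-complement-equals-zero derivation of \eqref{eq:skew}--\eqref{eq:M1M3} is, after transposition, the same linear algebra as Lemma~\ref{le:1611180107}. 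One terminological slip: in the necessary direction $\widehat M\geq 0$ is equivalent only to $N\geq 0$ (positive semidefiniteness), not to $N>0$.

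There is, however, a genuine gap exactly where you flag the "algebraic obstruction": the proposal never actually proves that $\widehat M_1$ and $\widehat M_3$ are invertible. This is not a cosmetic omission: the Schur-complement derivation of \eqref{eq:skew} and \eqref{eq:M1M3} presupposes $\widehat M_1^{-1}$, so as written the argument is circular until \eqref{1511181908} is established. Your observation that $N\geq 0$ forces $\Ker\widehat M_1\subset\Ker\widehat M_2^T$, hence (using \eqref{eq:M2bis}) $(v,0,0)\in\Ker\widehat M$ for every $v\in\Ker\widehat M_1$, is the right starting point, but "combine with the rank-$3$ constraint" is a plan, not a proof. One way to close it: for $v\in\Ker\widehat M_1$, $v\neq 0$, one has $(\widehat A_\nu\pm\widehat M)(v,0,0)^T=\widehat A_\nu(v,0,0)^T=(0,(\widehat A'_\nu)^Tv,(\widehat A''_\nu)^Tv)^T\neq 0$ since $\widehat A'_\nu$ is invertible; this nonzero vector lies in $\Ima(\widehat A_\nu+\widehat M)\cap\Ima(\widehat A_\nu-\widehat M)=\big(\Ker(\widehat A_\nu+\widehat M)+\Ker(\widehat A_\nu-\widehat M)\big)^\perp=\{0\}$ by condition (iii), a contradiction, and symmetrically for $\widehat M_3$. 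By contrast the paper (Lemma~\ref{lem:3}) gets $\rk M_1=\rk M_3=3$ and $\rk(A_\nu\pm M)=3$ in one pass: assuming $\rk M_3=m<3$, it shows $\rk(A_\nu\pm M)\geq 6-m$ and derives the dimension-count contradiction $2(3+m)-3<9$ against condition (iii). Either way, you must prove this step, not just identify it.
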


We now subdivide the proof into several technical lemmas.

\begin{lemma}\label{le:1611180036}
We have
\begin{equation}\label{1611180038}
\Ker A_\nu = \Ker(A_\nu + M) \cap \Ker(A_\nu-M)\,.
\end{equation}
In addition,
\begin{equation}\label{1611180040}
\Ker(A'_\nu | A''_\nu)  \subset  \Ker(M_2 | M_2') \cap \Ker(M_3 | M_3') \cap \Ker\left((M'_3)^T | M''_3\right).
\end{equation}
\end{lemma}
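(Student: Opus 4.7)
The plan is to prove the two statements separately, with the first being an elementary consequence of the hypotheses \eqref{eq:M}, and the second following by unpacking $\Ker A_\nu$ in terms of the block structure and then applying the inclusion $\Ker A_\nu \subset \Ker M$ componentwise.

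For \eqref{1611180038}, the inclusion $\Ker A_\nu \subset \Ker(A_\nu+M) \cap \Ker(A_\nu-M)$ is immediate: if $A_\nu U = 0$ then by the second line of \eqref{eq:M} we also have $MU=0$, so $(A_\nu \pm M)U=0$. Conversely, if $U \in \Ker(A_\nu+M) \cap \Ker(A_\nu-M)$, then summing the two identities $(A_\nu + M)U = 0$ and $(A_\nu - M)U = 0$ yields $2A_\nu U = 0$, hence $U \in \Ker A_\nu$. This already disposes of the first claim.

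For \eqref{1611180040}, the key is to identify $\Ker A_\nu$ explicitly from \eqref{0210181955}. Writing any $U \in \R^9$ as $U = (U_1, U_2, U_3)^T$ with $U_i \in \R^3$, the condition $A_\nu U = 0$ splits into
\begin{equation*}
A'_\nu U_2 + A''_\nu U_3 = 0, \qquad (A'_\nu)^T U_1 = 0, \qquad (A''_\nu)^T U_1 = 0.
\end{equation*}
The latter two conditions together mean $U_1 \in \Ker (A'_\nu \mid A''_\nu)^T$, which by \eqref{1311181051} is $(\mathrm{Im}(A'_\nu \mid A''_\nu))^\perp = \{0\}$; thus $U_1 = 0$ and $U \in \Ker A_\nu$ reduces to $(U_2, U_3)^T \in \Ker(A'_\nu \mid A''_\nu)$.

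Now let $(U_2, U_3)^T \in \Ker(A'_\nu \mid A''_\nu)$. By the previous step the vector $U := (0, U_2, U_3)^T$ belongs to $\Ker A_\nu$, and so by \eqref{eq:M} it belongs to $\Ker M$. Using the block form \eqref{0310180617}, the identity $MU = 0$ reads
\begin{equation*}
\begin{pmatrix} M_2 U_2 + M'_2 U_3 \\ M_3 U_2 + M'_3 U_3 \\ (M'_3)^T U_2 + M''_3 U_3 \end{pmatrix} = 0,
\end{equation*}
which is precisely the statement that $(U_2, U_3)^T$ lies in each of $\Ker(M_2 \mid M'_2)$, $\Ker(M_3 \mid M'_3)$, and $\Ker((M'_3)^T \mid M''_3)$. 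This gives \eqref{1611180040}.

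There is no significant obstacle here; both claims are purely linear-algebraic and rely only on the block structure of $A_\nu$, on the rank identity \eqref{1311181051} (which ensures that the first block of any vector in $\Ker A_\nu$ vanishes), and on the assumption $\Ker A_\nu \subset \Ker M$. The only point deserving care is the initial reduction of $\Ker A_\nu$ via the rank condition, which is what forces the first component $U_1$ to be zero and makes the block reading of $MU=0$ yield exactly the three kernel inclusions claimed.
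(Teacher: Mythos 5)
Your proof is correct and follows essentially the same route as the paper: the first claim by summing $(A_\nu+M)U=0$ and $(A_\nu-M)U=0$, and the second by using \eqref{1311181051} to show that any $U\in\Ker A_\nu$ has vanishing first block, then reading off the block identity $MU=0$ via $\Ker A_\nu\subset\Ker M$. The only cosmetic difference is that the paper writes $U_1,U_2,U_3$ for the first three scalar components of $U\in\R^9$ while you use them for the three $\R^3$-blocks, but the rank argument and the conclusion are identical.
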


\begin{proof}

\medskip

As for \eqref{1611180038}, $\Ker A_\nu \subset \Ker M$  readily gives that $\Ker A_\nu \subset \Ker(A_\nu + M) \cap \Ker(A_\nu-M)$. On the other hand, if $U \in  \Ker (A_\nu + M) \cap \Ker (A_\nu - M)$, then $(A_\nu + M)U=(A_\nu - M)U=0$, so summing up both equalities yields $A_\nu\, U=0$, hence $U \in \Ker A_\nu$. 

\medskip

Finally, we observe that if $U \in \Ker A_\nu$, then 
$$\begin{pmatrix}U_1\\U_2\\U_3\end{pmatrix} \in \Ker \begin{pmatrix}(A'_\nu)^T\\(A''_\nu)^T\end{pmatrix}$$
which implies that $U_1=U_2=U_3=0$ according to \eqref{1311181051} and the Rank Theorem. Thus $\Ker A_\nu =\{\OO_3\}\times \Ker (A'_\nu|A''_\nu)$ and thanks to the decomposition  \eqref{0310180617} of the matrix $M$, the inclusion \eqref{1611180040} follows from the second condition in \eqref{eq:M}.
\end{proof}

\begin{remark}
Notice that \eqref{eqs:1310191917} hold with $A'_\nu$, $A''_\nu$, $M_1$, $M_2$, $M_3$ in place of $\widehat{A}'_\nu$, $\widehat{A}''_\nu$, $\widehat{M}_1$, $\widehat{M}_2$, $\widehat{M}_3$ if and only if $\nu_i \neq 0$ for all $i \in \{1,2,3\}$. Moreover, the admissibility conditions \eqref{eq:M} are invariant if both $\widehat{A}_\nu$ and $\widehat{M}$ replace $A_\nu$ and $M$.
This allows us, in order to ease the reading, to argue for a fixed $\nu \in \mathbb{S}^2$ with $\nu_i \neq 0$ for all $i \in \{1,2,3\}$, as we do in the following. The case with general $\nu$ is readily obtained by replacing $A'_\nu$, $A''_\nu$, $M_1$, $M_2$, $M_2'$, $M_3$, $M_3'$, $M_3''$ by $\widehat{A}'_\nu$, $\widehat{A}''_\nu$, $\widehat{M}_1$, $\widehat{M}_2$, $\widehat{M}_2'$, $\widehat{M}_3$, $\widehat{M}_3'$, $\widehat{M}_3''$.
\end{remark}

From now on we assume that $\nu_i \neq 0$ for all $i \in \{1,2,3\}$. Since, by \eqref{1310191933}, $\rk A'_\nu=3$, there holds
\begin{equation}\label{eq:KerAnu}
\Ker A_\nu={\rm Vect} \left\{
\begin{pmatrix}
0_{\R^3}\\ (A'_\nu)^{-1} (A''_\nu)^{(j)} \\ -e_j
\end{pmatrix} \right\}_{1 \leq j \leq 3},
\end{equation}
where  $E^{(j)}$ denotes the $j$-th column of a matrix $E$  and $e_j$ 
is the $j$-th vector of the canonical basis of $\R^3$. 
Using that $\Ker A_\nu \subset \Ker M$, we get that 
\begin{equation}\label{eq:M2'}
 M_2'=  M_2  (A'_\nu)^{-1}A''_\nu,\quad M_3'=  M_3  (A'_\nu)^{-1}A''_\nu,\quad M_3''=  (M_3')^T  (A'_\nu)^{-1}A''_\nu\,.
 \end{equation}
As a consequence, the matrix $M$ will be entirely determined by the submatrices $M_1$, $M_2$ and $M_3$.

\begin{lemma}\label{lem:3}
Let $\nu \in \mathbb S^2$ be such that $\nu_i \neq 0$ for all $i \in \{1,2,3\}$. The matrices $M_1$ and $M_3$ are nonnegative,
\begin{equation}\label{0210182243}
 \rk M_1 = \rk M_3 =3,
 \end{equation}
and
 \begin{equation}\label{1511181747bis}
\rk (A_\nu + M) = \rk (A_\nu - M) =3\,.
\end{equation}
\end{lemma}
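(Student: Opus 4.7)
My plan is to handle the nonnegativity of $M_1, M_3$ first as an immediate consequence of the PSD assumption on $M$, then prove the two rank identities separately. The key ingredients are Lemma~\ref{le:1611180036}, the third condition in \eqref{eq:M}, the explicit description \eqref{eq:KerAnu} of $\Ker A_\nu$, and the signature of $A_\nu$ coming from \eqref{0210182358}.

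First, since $M \in \mathbb M^9_{\mathrm{sym}}$ is nonnegative and $M_1$, $M_3$ are principal submatrices of $M$, they are nonnegative. To upgrade this to $\rk M_1 = \rk M_3 = 3$, suppose by contradiction that $M_1 U_1 = 0$ for some $U_1 \in \R^3 \setminus \{0\}$ (the argument for $M_3$ is analogous). Setting $U' := (U_1, 0_{\R^3}, 0_{\R^3})^T \in \R^9$, the identity $M U' \cdot U' = M_1 U_1 \cdot U_1 = 0$ combined with $M \geq 0$ forces $M U' = 0$. Using the decomposition $\R^9 = \Ker(A_\nu - M) + \Ker(A_\nu + M)$ from \eqref{eq:M}, write $U' = V + W$ with $A_\nu V = -MV$ and $A_\nu W = MW$. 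Then $A_\nu(V - W) = -M(V+W) = -MU' = 0$, so $V - W \in \Ker A_\nu \subset \Ker M$, giving $MV = MW$. But adding the two also gives $MV + MW = MU' = 0$, so $MV = MW = 0$, hence $A_\nu V = A_\nu W = 0$. Therefore $U' = V + W \in \Ker A_\nu$. The explicit description \eqref{eq:KerAnu} shows that every element of $\Ker A_\nu$ has vanishing first three components; thus $U_1 = 0$, contradicting the assumption.

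For the rank identity \eqref{1511181747bis}, I start from the two key facts supplied by \eqref{eq:M} and Lemma~\ref{le:1611180036}:
$$\Ker(A_\nu + M) + \Ker(A_\nu - M) = \R^9, \qquad \Ker(A_\nu + M) \cap \Ker(A_\nu - M) = \Ker A_\nu,$$
the latter having dimension $3$ by \eqref{0210182358}. Grassmann's formula then yields
$$\dim \Ker(A_\nu + M) + \dim \Ker(A_\nu - M) = 9 + 3 = 12,$$
which by the rank--nullity theorem is equivalent to $\rk(A_\nu + M) + \rk(A_\nu - M) = 6$. It therefore suffices to prove the two lower bounds $\rk(A_\nu \pm M) \geq 3$, since they will force both ranks to equal $3$.

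To obtain those lower bounds, I exploit the inertia of $A_\nu$. Because $A_\nu$ has the block structure \eqref{0210181955} with the $3 \times 6$ block $(A'_\nu | A''_\nu)$ of rank $3$ by \eqref{1311181051}, its singular value decomposition shows that the spectrum of $A_\nu$ consists of three positive eigenvalues $\sigma_1, \sigma_2, \sigma_3$, their three negatives $-\sigma_1, -\sigma_2, -\sigma_3$, and three zero eigenvalues, so $A_\nu$ has inertia $(3,3,3)$. Now if $U \in \Ker(A_\nu + M)$ then $A_\nu U \cdot U = -MU \cdot U \le 0$, so $A_\nu$ is negative semidefinite on $\Ker(A_\nu + M)$. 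By Sylvester's law of inertia, the dimension of any such subspace is bounded by the number of non\-positive eigenvalues of $A_\nu$, which equals $3 + 3 = 6$; hence $\rk(A_\nu + M) \geq 3$. The symmetric argument with $U \in \Ker(A_\nu - M)$ giving $A_\nu U \cdot U = MU\cdot U \ge 0$ yields $\rk(A_\nu - M) \geq 3$, concluding the proof.

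The only delicate point I expect is the argument that $MU'=0$ forces $U' \in \Ker A_\nu$: one must be careful that the decomposition into $V,W$ does not a priori imply $V, W \in \Ker M$ separately, and the trick is to first extract $V - W \in \Ker A_\nu \subset \Ker M$ from the third condition of \eqref{eq:M}, and then combine with $M U' = 0$ to get the separate vanishings. The rest is bookkeeping.
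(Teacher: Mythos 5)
Your proof is correct but follows a genuinely different route from the paper's. For $\rk M_1 = \rk M_3 = 3$, you extract from \eqref{eq:M} the structural fact that $\Ker M = \Ker A_\nu$ (a nice observation the paper does not isolate), then read off the constraint on $M_1,M_3$ from the explicit form \eqref{eq:KerAnu} of $\Ker A_\nu$; the paper instead assumes $\rk M_3 = m < 3$ and runs a multi-step rank computation on block submatrices of $A_\nu \pm M$ to reach a Grassmann contradiction. For \eqref{1511181747bis}, you diverge again: the paper bounds $\rk(A_\nu \pm M) \geq 6-m$ by a careful count of independent rows and then closes the loop with Grassmann, whereas you obtain the lower bound $\rk(A_\nu \pm M) \geq 3$ directly from the inertia $(3,3,3)$ of $A_\nu$, using that $M \geq 0$ makes $\Ker(A_\nu \pm M)$ a subspace on which $A_\nu$ is semidefinite, so its dimension is at most $6$. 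Your argument is more conceptual, decouples the two conclusions, and avoids the paper's intermediate quantity $m$; the paper's is more hands-on row-rank bookkeeping but reaches the same Grassmann identity. One small point to tidy: for $M_3$, the argument called ``analogous'' to the $M_1$ case needs a slightly different observation from \eqref{eq:KerAnu} — not that the first three components of an element of $\Ker A_\nu$ vanish (for $U' = (0, U_2, 0)$ they do trivially), but that an element of $\Ker A_\nu$ whose last three components vanish must be zero, since the spanning vectors have last three components $-e_j$ and are therefore linearly independent precisely through that block. This is a one-line fix and does not affect the substance of the argument.
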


\begin{proof}
By the first condition in \eqref{eq:M}, since $M$ is non negative, we have that $M_1$ and $M_3$ are non negative. Next we only prove that $\rk M_3=3$ since the argument for $M_1$ is similar. Assume by contradiction that $\rk M_3 = m < 3$. 
\medskip
\paragraph{\it Step 1.} 
Let us show that
\begin{equation}\label{0210182246}
\rk  \begin{pmatrix}
M_2 \\ M_3
\end{pmatrix} = m \,.
\end{equation}
We first notice that 
$$\Ker \begin{pmatrix}
M_2\\ M_3
\end{pmatrix}
=\Ker M_2 \cap \Ker M_3 \subset \Ker M_3\,.$$
On the other hand, if $x\in \Ker M_3 \sm \Ker M_2$, then for any $c \in \R$
\begin{equation*}
M \begin{pmatrix}
M_2 x \\ c\, x \\ 0 
\end{pmatrix} \cdot \begin{pmatrix}
M_2 x \\ c\, x \\ 0 
\end{pmatrix} = M_1 (M_2 x) \cdot M_2 x + 2 c |M_2 x|^2  \,,
\end{equation*}
which tends to $-\infty$ as $c\to -\infty$, in contradiction with the nonnegativity of $M$. This implies that $\Ker M_3 \subset \Ker M_2$, hence
\begin{equation}\label{eq:KerM3}
\Ker M_3 = \Ker M_2 \cap \Ker M_3 = \Ker \begin{pmatrix}
M_2\\ M_3
\end{pmatrix},
\end{equation}
and thus, the Rank Theorem gives \eqref{0210182246}. 

\medskip

\paragraph{\it Step 2.} Let us prove that
\begin{equation}\label{0210182322}
\rk \begin{pmatrix}
M_2 \pm A_\nu' \\ M_3
\end{pmatrix} = 3 \,.
\end{equation}
Indeed, assume that there exists $x \in \R^3$ with $x \neq 0$ and 
$$x \in \Ker\begin{pmatrix}
M_2 \pm A_\nu' \\ M_3
\end{pmatrix}\,.$$
In particular, using \eqref{eq:KerM3},
$$x \in \Ker M_3 = 
\Ker \begin{pmatrix}
M_2\\ M_3
\end{pmatrix},$$
hence $x \in \Ker A'_\nu$, which is impossible since $A'_\nu$ is invertible. This yields
$$\Ker \begin{pmatrix}
M_2 \pm A_\nu' \\ M_3
\end{pmatrix}=\{0\},$$
and thus \eqref{0210182322} holds according to the Rank Theorem.

\medskip

\paragraph{\it Step 3} Let us show that
\begin{equation}\label{2210181220}
\rk (A_\nu \pm M) \geq 6-m\,.
\end{equation}

We denote by
$$A_\pm:=\begin{pmatrix}
M_1 &  M_2 \pm A_\nu'  \\
(M_2 \pm A_\nu')^T & M_3
\end{pmatrix} \in \mathbb M^6_{\rm sym},$$
and we observe that $\rk (A_\nu \pm M) \geq \rk A_\pm$. By \eqref{0210182322},  the last three lines of $A_\pm$ are linearly independent. Moreover, since $\rk M_3 =m$, using again \eqref{0210182322} we get that at least $3-m$ lines of $M_2 \pm A_\nu'$ are not contained in $\Ima M_3$. 
We denote their indices by $j_1, \dots, j_{3-m} \in \{1,2,3\}$. Since the lines with indices $j_1, \dots, j_{3-m}$ of $M_2 \pm A_\nu'$ are not a linear combination of those of $M_3$, \emph{a fortiori} the lines with indices $j_1, \dots, j_{3-m}$ of $A_\pm$ are not a linear combination of the last three lines of $A_\pm$. We deduce that the lines of $A_\pm$ with indices $j_1, \dots, j_{3-m}$, $4$, $5$ and $6$  are linearly independent, and thus that $\rk A_\pm \geq 3+(3-m)$.

\medskip

\paragraph{\it Step 4} We now conclude the proof of \eqref{0210182243}.  By Lemma~\ref{le:1611180036}, we have that
\[\dim \Big(\Ker(A_\nu+M) \cap \Ker(A_\nu-M) \Big)=\dim\Ker A_\nu = 3\,,
\]
while \eqref{2210181220} together with the Rank Theorem imply that
$$\dime (\Ker (A_\nu \pm M)) =9-\rk(A_\nu \pm M) \leq 3+m\,.$$
Then, we deduce that
\begin{eqnarray*}
\dime (\Ker (A_\nu + M) + \Ker (A_\nu - M)) & =& \dime (\Ker  (A_\nu + M)) + \dime (\Ker  (A_\nu - M)) \\
&&- \dime (\Ker  (A_\nu + M) \cap \Ker  (A_\nu - M) ) \\ 
& \leq & 2(3+m)-3 < 9 \,,
\end{eqnarray*}
since $m<3$, which is against the third condition in \eqref{eq:M}. This gives $m=3$, and then completes the proof of \eqref{0210182243} for $M_3$. The same holds for $M_1$.

\medskip

\paragraph{\it Step 5} From the previous step, we infer that 
$$\begin{cases}
\dim (\Ker (A_\nu \pm M))\leq 6,\\
\dim (\Ker  (A_\nu + M)) + \dime (\Ker  (A_\nu - M)) - \dime (\Ker  (A_\nu + M) \cap \Ker  (A_\nu - M) )=9,\\
\dim \big(\Ker(A_\nu+M) \cap \Ker(A_\nu-M) \big)=3.
\end{cases}$$
Thus, we deduce that $\dim (\Ker  (A_\nu + M)) + \dime (\Ker  (A_\nu - M)) =12$, 
hence $\dime (\Ker (A_\nu \pm M))= 6$, leading to \eqref{1511181747bis} owing to the Rank Theorem.
\end{proof}

\begin{lemma}\label{le:1611180107}
Let $\nu \in \mathbb S^2$ be such that $\nu_i \neq 0$ for all $i \in \{1,2,3\}$. Then, $(A'_\nu)^T\, M_1^{-1} M_2\in \mathbb M^3_{\rm skew}$, $A'_\nu \, M_3^{-1} (M_2)^T \in \mathbb M^3_{\rm skew}$ and we have
\begin{eqnarray*}
M_3  & = & (A'_\nu)^T \, M_1^{-1} A'_\nu + (M_2)^T M_1^{-1} M_2,\\
M_1 & = & A'_\nu \, M_3^{-1} (A'_\nu)^T +  M_2 M_3^{-1} (M_2)^T.
\end{eqnarray*}
\end{lemma}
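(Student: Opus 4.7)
My plan is to use the rank condition $\rk(A_\nu \pm M) = 3$ from Lemma \ref{lem:3}, which by the rank theorem means $\dim \Ker(A_\nu \pm M) = 6$, and parametrize these kernels explicitly using the block structure. Throughout, I assume $\nu_i \neq 0$ so $A'_\nu$ is invertible and \eqref{eq:M2'} holds. The key preliminary observation I would establish is the identity
\begin{equation*}
A''_\nu \pm M'_2 = (A'_\nu \pm M_2)(A'_\nu)^{-1} A''_\nu,
\end{equation*}
which follows immediately from $M'_2 = M_2 (A'_\nu)^{-1} A''_\nu$, and the analogous identities for $M'_3$ and $M''_3$.

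Next I would write a vector $U = (U_1,U_2,U_3)^T \in \Ker(A_\nu \pm M)$ as three block equations. Introducing the auxiliary vector $V := U_2 + (A'_\nu)^{-1} A''_\nu U_3$, the first block equation gives, since $M_1$ is invertible by Lemma \ref{lem:3},
\begin{equation*}
U_1 = \mp M_1^{-1} (A'_\nu \pm M_2) V.
\end{equation*}
Substituting into the second block equation and using $M'_3 = M_3 (A'_\nu)^{-1} A''_\nu$ makes the $U_3$-terms collapse into $V$, reducing it to $N_\pm V = 0$ where
\begin{equation*}
N_\pm := M_3 - (A'_\nu \pm M_2)^T M_1^{-1} (A'_\nu \pm M_2).
\end{equation*}
A direct computation using the expressions for $M'_2$, $M'_3$, $M''_3$ shows that the third block equation is a consequence of the second. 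Hence $\Ker(A_\nu \pm M)$ is parametrized by $(V,U_3) \in \Ker N_\pm \times \R^3$ (with $U_1, U_2$ then determined), so $\dim \Ker(A_\nu \pm M) = \dim \Ker N_\pm + 3$.

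Since $\dim \Ker(A_\nu \pm M) = 6$ by \eqref{1511181747bis}, this forces $\Ker N_\pm = \R^3$, i.e.\ $N_+ = N_- = 0$. Adding the two resulting identities yields the first claim $M_3 = (A'_\nu)^T M_1^{-1} A'_\nu + M_2^T M_1^{-1} M_2$, while subtracting them yields $(A'_\nu)^T M_1^{-1} M_2 + M_2^T M_1^{-1} A'_\nu = 0$, which is exactly the skew-symmetry of $(A'_\nu)^T M_1^{-1} M_2$. For the companion identities involving $M_3^{-1}$, the same argument applies symmetrically: from the second block equation one solves for $U_2$ (using that $M_3$ is invertible by Lemma \ref{lem:3}) in the form $U_2 = \mp M_3^{-1}(A'_\nu \pm M_2)^T U_1 - (A'_\nu)^{-1} A''_\nu U_3$, and substitution into the first block equation leads to $\widetilde N_\pm U_1 = 0$ with $\widetilde N_\pm := M_1 - (A'_\nu \pm M_2) M_3^{-1} (A'_\nu \pm M_2)^T$; the same dimension count forces $\widetilde N_\pm = 0$, yielding the remaining two identities.

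The only delicate point is the verification that the third block equation is automatically satisfied, which depends precisely on the relations \eqref{eq:M2'} coming from $\Ker A_\nu \subset \Ker M$; once this is in place the rest is linear bookkeeping. There is no genuine obstacle beyond this algebraic check.
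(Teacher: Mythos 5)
Your proof is correct and rests on exactly the same idea as the paper: the rank condition $\rk(A_\nu\pm M)=3$ together with the invertibility of $M_1$ (resp.\ $M_3$) forces the Schur-complement identities $M_3=(A'_\nu\pm M_2)^T M_1^{-1}(A'_\nu\pm M_2)$ and $M_1=(A'_\nu\pm M_2)M_3^{-1}(A'_\nu\pm M_2)^T$, from which the claims follow by adding and subtracting the $+$ and $-$ versions. The paper phrases this dually, noting that the fourth through sixth columns of $A_\nu\pm M$ must be linear combinations of the first three (which span the column space), whereas you parametrize $\Ker(A_\nu\pm M)$ and use the dimension count, but the two readings of $\rk(A_\nu\pm M)=3$ are equivalent and the resulting algebra is identical.
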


\begin{proof}
Since $\rk(A_\nu \pm M)=3$ and $M_1$ is invertible, then each column of $A_\nu\pm M$ is generated by the first three columns of $A_\nu\pm M$. Thus 
$$\begin{pmatrix}
 M_2 \pm A_\nu' \\ M_3
\end{pmatrix}=
\begin{pmatrix}
M_1 \\ (M_2 \pm A_\nu')^T
\end{pmatrix}  M_1^{-1}(M_2 \pm A_\nu'),$$
and we obtain that
\begin{eqnarray*}
M_3 & = & (M_2\pm A'_\nu)^T M_1^{-1} (M_2\pm A'_\nu)\\
& = & (A_\nu')^T \, M_1^{-1} A_\nu' + (M_2)^T M_1^{-1} M_2 \pm (M_2)^T M_1^{-1} A_\nu' \pm (A_\nu')^T\, M_1^{-1} M_2.
\end{eqnarray*}
Note that in particular
\begin{equation}\label{eq:det}
 \rk  (M_2\pm A'_\nu)=3.
\end{equation}
Similarly, using that $M_3$ is invertible, we have
$$M_1 = A'_\nu \, M_3^{-1} (A'_\nu)^T + M_2 M_3^{-1} (M_2)^T \pm M_2 M_3^{-1} (A'_\nu)^T \pm A'_\nu\, M_3^{-1} (M_2)^T .$$
These conditions are equivalent to those in the statement of the lemma. 
\end{proof}

We are now in position to complete the proof of Theorem~\ref{teo:1511181854}. 

\begin{proof}[Proof of Theorem \ref{teo:1511181854}]
Let us start by proving the necessary condition. We consider a matrix $M \in \mathbb M^9_{\rm sym}$ satisfying \eqref{eq:M}. By Lemma \ref{lem:3}, the matrices $M_1$ and $M_3$ are nonnegative and invertible so that \eqref{1511181908} holds. Next \eqref{eq:skew} and \eqref{eq:M1M3} have been proved in Lemma \ref{le:1611180107}, while  \eqref{eq:M2bis} is a consequence of \eqref{eq:M2'}. Finally, according to Lemma \ref{lem:3}, conditions \eqref{1511181747} are satisfied. 

\medskip

We now prove the sufficient condition. By \eqref{eq:M2bis} and the expression \eqref{eq:KerAnu} of $\Ker A_\nu$, we have that the second condition $\Ker A_\nu \subset \Ker M$ of \eqref{eq:M} is satisfied, from which we deduce that
$$\Ker (A_\nu + M) \cap \Ker (A_\nu - M) = \Ker A_\nu.$$
Since by \eqref{1511181747}, we have $\rk(A_\nu \pm M)=3$, then $\dime \Ker(A_\nu \pm M) = 6$, and
\begin{eqnarray*}
\dime \big(\Ker (A_\nu + M) + \Ker (A_\nu - M)\big)  &= & \dime \big(\Ker  (A_\nu + M)\big) + \dime \big(\Ker (A_\nu - M)\big)\\
&&- \dime \big(\Ker  (A_\nu + M) \cap \Ker  (A_\nu - M) \big)  = 9\,,
\end{eqnarray*}
so that the last condition $\Ker (A_\nu + M) + \Ker (A_\nu - M)=\R^9$ in \eqref{eq:M} holds. 
It lasts to show that $M$ is non-negative. We write any $v\in \R^9$ as $v=(v_1, v_2, v_3)$, with $v_1$,  $v_2$,  and $v_3 \in  \R^3$.  With this notation,  and  using \eqref{eq:M2bis}, we have  that 
\begin{multline*}
M v \cdot v = M_1 v_1 \cdot v_1 + M_3 v_2 \cdot v_2+ \big(M_3 (A'_\nu)^{-1}A''_\nu v_3\big)\cdot \big((A'_\nu)^{-1}A''_\nu v_3\big)\\
+ 2 M_2 v_2 \cdot v_1  + 2 \big(M_2 (A'_\nu)^{-1}A''_\nu v_3\big) \cdot v_1 + 2 \big(M_3 (A'_\nu)^{-1}A''_\nu v_3\big) \cdot v_2\,.\end{multline*}
According to \eqref{eq:M1M3}, we thus deduce that
\begin{eqnarray*}
M v \cdot v & =  & M_1 v_1 \cdot v_1 + (M_1^{-1}A'_\nu v_2) \cdot (A'_\nu v_2) + (M_1^{-1}M_2 v_2) \cdot (M_2 v_2)\\
&&+ \big(M_1^{-1}A''_\nu v_3\big)\cdot \big(A''_\nu v_3\big) + \big(M_1^{-1}M_2 (A'_\nu)^{-1}A''_\nu v_3\big)\cdot \big(M_2(A'_\nu)^{-1}A''_\nu v_3\big)\\
&&+ 2 (M_1^{-1} A''_\nu v_3) \cdot (A'_\nu v_2) + 2(M_1^{-1} M_2 (A'_\nu)^{-1} A''_\nu v_3) \cdot (M_2 v_2)\\
&&+ 2 M_2 v_2 \cdot v_1  + 2 (M_2 (A'_\nu)^{-1}A''_\nu v_3) \cdot v_1\\
& = &  \big(M_1^{-1} (M_1 v_1 + M_2 v_2) \big)\cdot (M_1 v_1 + M_2 v_2)\\
&&+ \big(M_1^{-1} A'_\nu(v_2 + (A'_\nu)^{-1}A''_\nu v_3)\big)\cdot \big(A'_\nu(v_2 + (A'_\nu)^{-1}A''_\nu v_3)\big) \\
&& + 2 (M_1^{-1} M_2 (A'_\nu)^{-1}A''_\nu  v_3) \cdot (M_1 v_1 + M_2 v_2)\\
&& + (M_1^{-1}M_2 (A'_\nu)^{-1}A''_\nu  v_3) \cdot (M_2 (A'_\nu)^{-1}A''_\nu  v_3) \\
& = & \big(M_1^{-1} (M_1 v_1 + M_2 v_2 + M_2 (A'_\nu)^{-1}A''_\nu  v_3)\big) \cdot (M_1 v_1 + M_2 v_2 + M_2 (A'_\nu)^{-1}A''_\nu  v_3) \\
&&+ \big(M_1^{-1} A'_\nu(v_2 +(A'_\nu)^{-1}A''_\nu v_3)\big)\cdot (A'_\nu(v_2 + (A'_\nu)^{-1}A''_\nu v_3)) \geq 0\,,
\end{eqnarray*}
where we have used that $M_1^{-1}$ is nonnegative in the last inequality. Therefore $M \in \Mnn$ is non negative, and then it satisfies all conditions in \eqref{eq:M}.
\end{proof}

\subsubsection{Derivation of all dissipative boundary conditions}

We are now in the position to write explicitly all admissible dissipative boundary conditions. Let us introduce the following notation.
\begin{definition}\label{def:tilde}
Given a matrix $\sigma \in \mathbb M^3$ (non necessarily symmetric), we associate to $\sigma$ the vector
$$ \sigma_{\rm pr}=(\sigma_{11},\sigma_{22},\sigma_{33},\sigma_{12},\sigma_{13},\sigma_{23})^T \in \R^6.$$
\end{definition}

\begin{remark}\label{rem:0611191250}
Let $\sigma \in \mathbb M^3$ and $\sigma_{\rm pr} \in \R^6$ be the associated vector given by Definition \ref{def:tilde}. We further decompose $\sigma_{\rm pr}$ as $\sigma_{\rm pr}=(\sigma'_{\rm pr},\sigma''_{\rm pr})^T$ where
$\sigma'_{\rm pr}=  (\sigma_{11} ,\sigma_{22} , \sigma_{33})^T \in \R^3$ and $\sigma''_{\rm pr}=(\sigma_{12} , \sigma_{13}  , \sigma_{23})^T \in \R^3$. An immediate computation shows that
\begin{equation}\label{0311190740}
B'_\nu \sigma'_{\rm pr}+B''_\nu \sigma''_{\rm pr}=-\sigma_{\rm sym}\nu
\end{equation}
where $\sigma_{\rm sym}\in \mathbb M^3_{\rm sym}$ is defined by
$$\sigma_{\rm sym}
:=\begin{pmatrix}
\sigma_{11} & \sigma_{12} & \sigma_{13}\\
\sigma_{12} & \sigma_{22} & \sigma_{23}\\
\sigma_{13} & \sigma_{23} & \sigma_{33}
\end{pmatrix}.$$
Note that $\sigma_{\rm sym}$ differs from the symmetric part of $\sigma$ defined by $(\sigma+\sigma^T)/2$. In addition, if $\sigma$ is already symmetric, then $\sigma_{\rm sym}=\sigma$.
\end{remark}

\begin{proposition}\label{prop:bdcond}
Let $\nu \in \mathbb S^2$  and $A_\nu \in \mathbb M^9_{\rm sym}$ be the matrix given by \eqref{0210181955}. Then $M\in \mathbb M^9_{\rm sym}$  is 
a boundary matrix satisfying properties \eqref{eq:M} if and only if there exist matrices $S_1 \in \mathbb M^{3,+}_{\rm sym}$, $S_2 \in \mathbb M^3_{\rm skew}$ such that both $S_1 \pm S_2$ are invertible and the following property holds: for any $(v,\sigma) \in \R^3\times\mathbb M^3_{\rm sym}$, defining
$$U:=\wt A_0^{\frac12}\begin{pmatrix} v \\ \sigma_{\rm pr} \end{pmatrix} \in \R^9,$$
then
\begin{equation}\label{0311190747}
(A_\nu \pm M)U=0 \quad \Longleftrightarrow \quad (S_1\pm S_2) v \mp \sigma\nu=0\,.
\end{equation}
\end{proposition}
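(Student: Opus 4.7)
The plan is to transport $(A_\nu\pm M)U=0$, via the change of variable $U=\wt A_0^{1/2}(v,\sigma_{\rm pr})^T$, into a system in the sole variables $v\in\R^3$ and $\sigma\nu\in\R^3$. First I would reduce, through the conjugation by $C_\nu$ as in the remark preceding the statement, to the case $\nu_i\neq 0$ for all $i$, so that $A'_\nu$ is invertible by \eqref{1310191931}. Using \eqref{1611180941}, \eqref{Aprime}, and \eqref{0311190740}, the top block of $A_\nu U$ equals $-\sigma\nu$, while the middle and bottom blocks equal $(A'_\nu)^T v$ and $(A''_\nu)^T v$ respectively.

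For the direct implication I would start from an admissible $M$ and, thanks to Theorem~\ref{teo:1511181854}, express $MU$ as follows: the top block equals $M_1 v-M_2(A'_\nu)^{-1}\sigma\nu$, the middle $M_2^T v-M_3(A'_\nu)^{-1}\sigma\nu$, and the bottom is $(A''_\nu)^T(A'_\nu)^{-T}$ times the middle (this uses \eqref{eq:M2bis}), which makes the bottom equation of $(A_\nu\pm M)U=0$ automatic. Combining \eqref{eq:M1M3} and \eqref{eq:skew} yields $M_3=(A'_\nu\pm M_2)^T M_1^{-1}(A'_\nu\pm M_2)$, so the middle equation is equivalent to the top one. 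Hence $(A_\nu\pm M)U=0$ collapses to $S_\pm v=\pm\sigma\nu$, where $S_\pm:=(I\pm N)^{-1}M_1$ and $N:=M_2(A'_\nu)^{-1}$ (the invertibility of $A'_\nu\pm M_2$ coming from Lemma~\ref{le:1611180107} and $M_3$ being invertible). Setting $S_1:=(S_++S_-)/2$ and $S_2:=(S_+-S_-)/2$, a short computation using $(I\pm N)(I\mp N)=I-N^2$ gives $S_1=(I-N^2)^{-1}M_1$ and $S_2=-NS_1$. Since \eqref{eq:skew} is equivalent to $NM_1+M_1 N^T=0$, both the symmetry of $S_1$ and the skewness of $S_2$ follow by direct manipulation; positive definiteness of $S_1$ is obtained from $v\cdot S_1 v=M_1^{-1}w\cdot w+M_1^{-1}Nw\cdot Nw$ for $w:=S_1 v$ (using $N^T M_1^{-1}=-M_1^{-1}N$), which is strictly positive for $v\neq 0$.

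For the converse, given $(S_1,S_2)$ with the stated properties, I would set $N:=-S_2 S_1^{-1}$ (so $I\pm N=(S_1\mp S_2)S_1^{-1}$ is invertible) and define $M_1:=S_1-S_2 S_1^{-1}S_2$, $M_2:=N A'_\nu$, $M_3:=(A'_\nu)^T M_1^{-1}A'_\nu+M_2^T M_1^{-1}M_2$, together with $M_2', M_3', M_3''$ prescribed by \eqref{eq:M2bis}. Symmetry and positive definiteness of $M_1$ are immediate from the symmetry of $S_1$ and the skewness of $S_2$. A direct computation using $NS_1=-S_2$ yields $NM_1+M_1 N^T=0$, i.e.\ \eqref{eq:skew}. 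Setting $W_\pm:=A'_\nu\pm M_2=(S_1\mp S_2)S_1^{-1}A'_\nu$ (invertible), the skewness gives $M_3=W_\pm^T M_1^{-1}W_\pm$, whence $W_\pm M_3^{-1}W_\pm^T=M_1$; then expanding $A'_\nu=(W_++W_-)/2$ and $M_2=(W_+-W_-)/2$ in $A'_\nu M_3^{-1}(A'_\nu)^T+M_2 M_3^{-1}M_2^T$ recovers $M_1$, namely the second identity of \eqref{eq:M1M3}, while \eqref{1511181747} follows from the invertibility of $W_\pm$. Theorem~\ref{teo:1511181854} then applies, $M$ satisfies \eqref{eq:M}, and the equivalence \eqref{0311190747} is obtained by reversing the computations of the forward direction.

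The main obstacle will be the algebraic bookkeeping needed to match the parametrization $(S_1,S_2)$ to the rigid block structure of $M$ in \eqref{eqs:1310191917} — particularly the second identity in \eqref{eq:M1M3} in the converse direction — but the key observation $M_3=W_\pm^T M_1^{-1}W_\pm$ (valid under the skewness condition) reduces both identities in \eqref{eq:M1M3} to straightforward $W_+/W_-$ expansions.
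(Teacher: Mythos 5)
Your proof is correct and essentially follows the paper's route: reduce $(A_\nu\pm M)U=0$ to a three-dimensional relation between $v$ and $\sigma\nu$, identify $S_1,S_2$, and invert the construction for the converse via Theorem~\ref{teo:1511181854}. The main variation is in how you carry out the reduction: the paper appeals to $\rk(A_\nu\pm M)=3=\rk M_3$ to conclude that only rows $4$--$6$ are active, whereas you perform an explicit block-by-block elimination — the bottom block is $(A''_\nu)^T(A'_\nu)^{-T}$ times the middle one by \eqref{eq:M2bis}, and the middle equation is equivalent to the top one through $M_3=(A'_\nu\pm M_2)^T M_1^{-1}(A'_\nu\pm M_2)$. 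This is a valid and somewhat more hands-on alternative, and your closed-form $S_1=(I-N^2)^{-1}M_1$, $S_2=-NS_1$ with $N=M_2(A'_\nu)^{-1}$ coincides with the paper's $S_1=A'_\nu M_3^{-1}(A'_\nu)^T$, $S_2=A'_\nu M_3^{-1}M_2^T$, while being perhaps easier to use for verifying symmetry, skewness and positive definiteness. Two small points should be made explicit in the converse. First, \eqref{1511181747} does not follow from the invertibility of $W_\pm:=A'_\nu\pm M_2$ alone: you need the identity $M_1=W_\pm M_3^{-1}W_\pm^T$ together with \eqref{eq:M2bis} to express rows $1$--$3$ (resp.\ rows $7$--$9$) of $A_\nu\pm M$ as $W_\pm M_3^{-1}$ (resp.\ $(A''_\nu)^T(A'_\nu)^{-T}$) times rows $4$--$6$, which have rank $3$ since $M_3$ is invertible. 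Second, Theorem~\ref{teo:1511181854} also asks for positive definiteness of $M_3$ (immediate from $M_1>0$ and the invertibility of $A'_\nu$) and the second skewness condition $A'_\nu M_3^{-1}M_2^T\in\mathbb M^3_{\rm skew}$, which does follow since $M_1=W_+ M_3^{-1}W_+^T=W_- M_3^{-1}W_-^T$ forces $A'_\nu M_3^{-1}M_2^T + M_2 M_3^{-1}(A'_\nu)^T=0$. With these spelled out, the argument is complete.
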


\begin{proof}
According to \eqref{1611180941}, it holds that
$$U=
\begin{pmatrix}
v\\
\wt C^{-\frac12} \sigma'_{\rm pr}\\
\mu^{-\frac12} \sigma''_{\rm pr}
\end{pmatrix},$$
where
$\sigma'=  (\sigma_{11} ,\sigma_{22} , \sigma_{33})^T \in \R^3$ and $\sigma''=(\sigma_{12} , \sigma_{13}  , \sigma_{23})^T \in \R^3$ so that $\sigma_{\rm pr}=(\sigma'_{\rm pr},\sigma''_{\rm pr})^T$. 

\medskip

{\bf Step 1.} We claim that (recall \eqref{1310191911})
\begin{equation}\label{1310192026}
(A_\nu\pm M)U=0 \quad \Longleftrightarrow \quad \widehat{A}'_\nu \widehat{M}_3^{-1}(\widehat{A}'_\nu\pm \widehat{M}_2)^Tv\mp \sigma \nu=0\,.
\end{equation} 
We prove \eqref{1310192026} first in the case where $\nu_i \neq 0$ for all $i \in \{1,2,3\}$ (where $A'_\nu$, $A''_\nu$, $M_1$, $M_2$, $M_3$ coincide with $\widehat{A}'_\nu$, $\widehat{A}''_\nu$, $\widehat{M}_1$, $\widehat{M}_2$, $\widehat{M}_3$).  Using \eqref{0210181955} and \eqref{0310180617}, we have
$$A_\nu\pm M=
\begin{pmatrix}
\pm M_1 & A'_\nu \pm M_2 & A''_\nu\pm M'_2\\
(A'_\nu\pm M_2)^T & \pm M_3 & \pm M'_3\\
(A''_\nu\pm M'_2)^T & \pm (M'_3)^T & \pm M''_3
\end{pmatrix}.$$
By  Theorem~\ref{teo:1511181854},  we have $\rk(A_\nu\pm M)=3$ and $\rk (M_3)=3$. Therefore $\Ima(A_\nu\pm M)$ is generated by the lines $4$, $5$ and $6$ of $A_\nu\pm M$. Thus, 
$(A_\nu\pm M)U=0$ if and only if
$$(A'_\nu\pm M_2)^T v\pm M_3 \wt C^{-\frac12} \sigma'_{\rm pr}\pm \mu^{-\frac12}M'_3 \sigma''_{\rm pr}=0,$$
or still, using condition \eqref{eq:M2bis} of  Theorem~\ref{teo:1511181854}, 
$$(A'_\nu\pm M_2)^T v\pm M_3  \wt C^{-\frac12}\sigma'_{\rm pr}\pm  \mu^{-\frac12}M_3(A'_\nu)^{-1}A''_\nu \sigma''_{\rm pr}=0.$$
Since $M_3$ is invertible, using \eqref{Aprime} this last equation is again equivalent to
$$A'_\nu M_3^{-1}(A'_\nu\pm M_2)^Tv\pm B'_\nu \sigma'_{\rm pr}\pm B''_\nu \sigma''_{\rm pr}=0\,,$$
and then, by \eqref{0311190740}, to
$$A'_\nu M_3^{-1}(A'_\nu\pm M_2)^T v  \mp \sigma\nu=0.$$

Let us now address the case where $\nu$ has at least a null component.  Recalling $C_\nu$ introduced in \eqref{1310191900}, we get that
\begin{eqnarray}\label{1310192113}
(A_\nu \pm M)U=0 & \Longleftrightarrow &  C_\nu (A_\nu \pm M)C_\nu^T C_\nu^{-T} U=0\nonumber\\
& \Longleftrightarrow &  (\widehat A_\nu \pm \widehat M)\widehat U=0
\end{eqnarray}
where $\widehat U=C_\nu^{-T} U$. Since $\rk(\widehat A_\nu \pm \widehat M)=3$ and 
$$\rk \begin{pmatrix}
\widehat M_2 \pm \widehat A_\nu' \\ \widehat M_3
\end{pmatrix} = 3 \,,$$
then the condition \eqref{1310192113} is equivalent to
$$ \big((\widehat{A}'_\nu\pm \widehat{M}_2\big)^T  |   \pm \widehat M_3|   \pm \widehat M'_3\big) \widehat U=0\,,$$
or still, multiplying on the left by $\widehat{A}'_\nu \widehat{M}_3^{-1}$ and using \eqref{eq:M2bis},  to 
\begin{equation}\label{eq:1130}
\big(\widehat{A}'_\nu \widehat{M}_3^{-1} \big(\widehat{A}'_\nu\pm \widehat{M}_2\big)^T  |   \pm \widehat A_\nu' |   \pm \widehat A_\nu''\big) \widehat U=0\,.
\end{equation}
According to Remark \ref{rem:Cnu}, we have that $\widehat U_i=U_i$ for all $i \in \{1,2,3\}$, while using that
$\widehat A_\nu \widehat U=C_\nu A_\nu U$ gives
$$\widehat A'_\nu 
\begin{pmatrix}
\widehat U_4\\
\widehat U_5\\
\widehat U_6
\end{pmatrix}
+ \widehat A''_\nu 
\begin{pmatrix}
\widehat U_7\\
\widehat U_8\\
\widehat U_9
\end{pmatrix}
= A'_\nu 
\begin{pmatrix}
 U_4\\
 U_5\\
 U_6
\end{pmatrix}+  A''_\nu 
\begin{pmatrix}
 U_7\\
 U_8\\
 U_9
\end{pmatrix}.$$
Hence, \eqref{eq:1130} is equivalent to
$$\Big(\widehat{A}'_\nu \widehat{M}_3^{-1} \big(\widehat{A}'_\nu\pm \widehat{M}_2\big)^T  |   \pm  A_\nu' |   \pm  A_\nu''\Big)  U=0$$
and arguing as above, we get that
$$ \Big(\widehat{A}'_\nu \widehat{M}_3^{-1} \big(\widehat{A}'_\nu\pm \widehat{M}_2\big)^T  |   \pm  A_\nu' |   \pm  A_\nu''\Big)  U =  \widehat{A}'_\nu \widehat{M}_3^{-1}(\widehat{A}'_\nu\pm \widehat{M}_2)^Tv\mp \sigma \nu\,,$$ so that \eqref{1310192026} is proven.

\medskip

{\bf Step 2.}  Given $M$ satisfying \eqref{eq:M},  let us define $S_1:=\widehat{A}'_\nu \widehat{M}_3^{-1} (\widehat{A}'_\nu)^T$ and $S_2:=\widehat{A}'_\nu \widehat{M}_3^{-1} (\widehat{M}_2)^T$.  According to  \eqref{eq:skew}, the matrix $S_2$ is skew symmetric. Moreover, since $\widehat{A}'_\nu$ 
 is  invertible and $\widetilde{M}_3$ is positive definite, we deduce that $S_1$ is positive definite. Finally, using \eqref{eq:det}, we have that $S_1\pm S_2=\widehat{A}'_\nu \widehat{M}_3^{-1}(\widehat{A}'_\nu\pm \widehat{M}_2)^T \in \mathbb M^3$ is invertible. 

Conversely, given $S_1 \in \mathbb M^{3,+}_{\rm sym}$ and $S_2 \in \mathbb M^3_{\rm skew}$, then we define 
$$\widehat{M}_3:=(\widehat A'_\nu)^T (S_1)^{-1} \widehat A'_\nu\,, \quad \widehat{M}_2:=\widehat{M}_3 (\widehat A'_\nu)^{-T} S_2\,,$$ 
$$\widehat{M}_1: = \widehat A'_\nu \, \widehat{M}_3^{-1} (\widehat A'_\nu)^T +  \widehat{M}_2 \widehat{M}_3^{-1} (\widehat{M}_2)^T,$$
$$\widehat{M}_2':=  \widehat{M}_2  (\widehat A'_\nu)^{-1}\widehat A''_\nu,\quad \widehat{M}_3':=  \widehat{M}_3  (\widehat A'_\nu)^{-1}\widehat A''_\nu,\quad \widehat{M}_3'' :=  (\widehat A''_\nu)^T(\widehat A'_\nu)^{-T}\widehat{M}_3  (\widehat A'_\nu)^{-1}\widehat A''_\nu\,,$$
and let  $M$ and $\widehat M \in \mathbb M^9_{\rm sym}$ be defined by  \eqref{0310180617} and \eqref{1310191911}.
From Theorem~\ref{teo:1511181854}, it follows that $M$ satisfies \eqref{eq:M}. With this correspondence between $M$ and $(S_1, S_2)$, \eqref{1310192026} guarantees that \eqref{0311190747} holds true, and this concludes the proof.
\end{proof}

\subsubsection{Final formulation of the models}

We complement the dynamical system of perfect plasticity \eqref{0110181923}--\eqref{eq:initial-cond} with dissipative boundary conditions of the form
\begin{equation}\label{1911181008}
S \dot u+\sigma\nu=0 \quad \text{ on }\partial \Omega \times (0,T)\,,
\end{equation}
for some $S \in \mathbb M^3$ an invertible matrix whose symmetric part $S_1:=(S+S^T)/2 \in \mathbb M^{3,+}_{\rm sym}$ is positive definite.
Indeed, setting $S_2:=-(S-S^T)/2 \in \mathbb M^3_{\rm skew}$, then $S_1-S_2=S$, $S_1+S_2=S^T$ and \eqref{0311190747} with the minus sign provides the correspondence between boundary conditions of different type.
Therefore, the full dynamical system of perfect plasticity reads as
\begin{equation}\label{0611191846}
\begin{dcases}
\ddot{u} - \diver \sigma=f & \text{ in } \Omega \times (0,T)\\
\E u= e+p& \text{ in } \Omega \times (0,T)\\
\sigma= \C e\in \mathbf K& \text{ in } \Omega \times (0,T)\\
\sigma:\dot{p}=H(\dot p) & \text{ in } \Omega \times (0,T)\\
S\dot u+\sigma\nu=0& \text{ on } \partial \Omega \times (0,T)\\
(u(0),\dot u(0),e(0),p(0))=(u_0,v_0,e_0,p_0)& \text{ in } \Omega.
\end{dcases}
\end{equation}

\medskip

The final part of this section is devoted to recast the entropic formulation from \eqref{FS}, according to the correspondence between $U$ and $(\dot{u}, \sigma)$ given in \eqref{0110182012}. We thus fix $\xi \in \R^9$ of the form
$$\xi=\wt A_0^{\frac12} \begin{pmatrix} z\\ \tau_{\rm pr}\end{pmatrix}$$
for some $(z,\tau) \in \R^3 \times  \mathbb M^3_{\rm sym}$.

\medskip

The following remark gathers some further properties of the matrices $A_i$ and $A_\nu$.

\begin{remark}
We observe three facts, following from direct computations. First, for all $X \in \R^3$, we have that
\begin{equation}\label{eq:usefull-formula}
\begin{cases}
|U-\xi|^2=|\dot u-z|^2 + \mathbf A^{-1}(\sigma-\tau):(\sigma-\tau)\,, \\
\displaystyle \sum_{i=1}^3 X_i A_i (U-\xi)\cdot (U-\xi)=-2\big( (\sigma-\tau) X\big) \cdot (\dot u-z)\,.
 \end{cases}
 \end{equation}
Second, since
$$\mathbf{A}(w \odot \nu) \nu= \Big(\mu \I_3 + (\mu + \lambda) (\nu \otimes \nu) \Big) w \quad \text{for every }w \in \R^3\,,$$
and $\mu \I_3 + (\mu + \lambda) (\nu \otimes \nu)$ is invertible when $\mu>0$ and $3\lambda+2\mu>0$, then there exists a linear map $\Phi\colon \R^3 \to \R^3$ such that
\begin{equation*}
\Phi(\eta):=\Big(\mu \I_3 + (\mu + \lambda) (\nu \otimes \nu) \Big)^{-1} \eta \quad\text{ satisfies }\quad \mathbf{A}(\Phi(\eta) \odot \nu) \nu=\eta \text{ for any }\eta \in \R^3\,.
\end{equation*}
Third, for any vector $\theta \in \R^9$ of the form $\theta=\widetilde{A}_0^{\frac12}(\theta_1 | \theta_2)^T$ with $\theta_1 \in \R^3$, $\theta_2 \in \R^6$, we have by \eqref{0311190740} and \eqref{2210181057},
\begin{equation*}
A_\nu \theta= \widetilde{A}_0^{-\frac12} \Big(\sum_{i=1}^3\widetilde{A}_i \nu_i \Big) (\theta_1 | \theta_2)^T= \widetilde{A}_0^{\frac12}\Big( -(\theta_2)_{\rm sym} \nu | \big(\mathbf{A} (\theta_1 \odot \nu)\big)_{\rm pr}  \Big)^T\,,
\end{equation*}
and we obtain that
\begin{equation*}
\Ima A_\nu=\Big\{ \widetilde{A}_0^{\frac12} \Big(w_1 | \big( \mathbf{A} (w_2 \odot \nu) \big)_{\rm pr} \Big)^T \colon w_1, \, w_2 \in \R^3 \Big\}\,.
\end{equation*}
\end{remark}

We now have at our disposal all the elements to determine the projections $\xi_0$ (resp. $\xi^\pm$) of $\xi$ onto $\Ker A_\nu$ (resp. $\Ker(A_{\nu}\pm {M})\cap \Ima A_{\nu} $).
Let us assume that $S$ is a symmetric matrix, which is the case where the variational approach can be developed, see Remark~\ref{rem:Ssym} below. 
Then, in view of Remark~\ref{rem:0611191250} and of Proposition~\ref{prop:bdcond}, we obtain that
\begin{equation*}
\xi_0=\frac12 \wt A_0^{\frac12}
\begin{pmatrix}
0_{\R^3}\\
\big( \tau-\mathbf{A}(\Phi(\tau \nu) \odot \nu) \big)_{\rm pr}
\end{pmatrix},
\qquad
\xi^\pm=\frac12 \wt A_0^{\frac12}
\begin{pmatrix}
z \pm S^{-1}\tau\nu\\
\big( \mathbf{A}(\Phi(\tau\nu \pm Sz)\odot \nu) \big)_{\rm pr}
\end{pmatrix}.
\end{equation*}
This finally gives, with \eqref{eq:usefull-formula}, that
\begin{equation}\label{eq:usefull-formula2}
M\xi^\pm\cdot \xi^\pm=\pm\frac12 S^{-1}(\tau\nu\pm S z)\cdot(\tau\nu\pm S z)\,.
\end{equation}

Therefore, setting $v:=\dot u$ and using \eqref{eq:usefull-formula}--\eqref{eq:usefull-formula2},  \eqref{FS} is equivalent to the following family of inequalities:  for all constant vectors $(z,\tau) \in \R^3 \times \mathbf K$ and all test functions $\varphi \in C^\infty_c(\R^3 \times \R)$ with $\varphi \geq 0$ it holds that
\begin{multline*}
\int_{\R^+}\int_\Omega  \left(  |v-z|^2 +\mathbf A^{-1}(\sigma-\tau): (\sigma-\tau)\right) \partial_t \varphi\dx \,\de t-2\int_{\R^+}\int_\Omega (\sigma -\tau):((v-z)\odot \nabla \varphi )\dx\, \de t \\
+\int_\Omega  \left(  |v_0-z|^2 +\mathbf A^{-1}(\sigma_0-\tau): (\sigma_0-\tau)\right) \varphi(0) \dx\\
+ 2 \int_{\R^+}\int_\Omega f\cdot(v-z) \varphi \dx\, \de t+\frac12\int_{\R^+}\int_\dom   S^{-1}(\tau\nu+Sz)\cdot(\tau\nu+Sz)\varphi\, \de \mathcal H^2 \de t\geq 0\,.
\end{multline*}

\section{Variational solutions}

\subsection{The elastoplastic model}

In this section, we study the model \eqref{0611191846}, corresponding to \eqref{0110181923}--\eqref{eq:initial-cond}  complemented  with the dissipative boundary conditions \eqref{1911181008}. Our analysis rests on a variational approach, starting from a visco--elasto--plastic model, for which well-posedness is easily established. Then we study the limit of these viscous solutions as the viscosity parameter becomes vanishingly small. We derive a unique evolution satisfying \eqref{0110181923} (in a suitable sense), and a relaxed version of the boundary conditions \eqref{1911181008}. Indeed the stress contraint $\sigma \in \mathbf K$ and the boundary condition $\sigma\nu+S\dot u=0$ are incompatible since formally $\sigma\nu$ belongs to $\mathbf K\nu$ while $\dot u$ is free. Thus the boundary condition will have to accommodate the constraint by projecting $S\dot u$ onto $-\mathbf K\nu$. From a mathematical point of view, this is related to the lack of continuity of the trace operator in $BD(\Omega)$ with respect to weak* convergence in that space (see \cite{Tem85,Bab}),  as well as the lack of lower semicontinuity of the energy which has to be relaxed, i.e., replaced by its lower semicontinuous envelope.  The relaxation procedure will make appear a new boundary energy related to the previous projection property.

\begin{remark}\label{rem:Ssym}
Note that our variational approach is based on the minimization of an energy functional at the discrete time level, see Subsection~\ref{subsec:visc}. This so-called incremental problem involves various energies (the kinetic and elastic energies, and the plastic dissipation) which are  convex, except a boundary energy of the form
$$u \mapsto \int_\dom S u\cdot u\dh$$
which is related to our choice of boundary conditions. Since the variational structure is lost while passing to the limit as the time-step tends to zero, it is necessary the minimality to be equivalent to the Euler-Lagrange equation (in order not to lose information). As usual, this property depends on the convexity of the energy functional and it rests on the symmetry of the matrix $S$. For that reason, we will assume the matrix $S$ to be symmetric (although the algebraic conditions derived in Proposition \ref{prop:bdcond} allow one to include matrices $S$ with a nontrivial skew symmetric part), in  hypothesis $(H_4)$ below. Our variational method does not permit to consider the more general situation of non symmetric matrices $S$.
\end{remark}

Let us now describe precisely the various hypotheses needed. 

\medskip

\noindent $(H_1)$ {\bf Reference configuration.} Let $\Omega \subset \Rn$ is a bounded Lipschitz open set which stands for the reference configuration of a linearly elastic and perfectly plastic body.

\medskip

\noindent $(H_2)$ {\bf Elasticity properties.} We suppose that the material is isotropic, which means that the Hooke's tensor involved in the constitutive law is expressed by
$$\C e =\lambda (\tr e) \, \I_n + 2\mu e \quad \text{ for all }e \in \Mnn,$$
where $\lambda$ and $\mu$ are the Lam\'e coefficients satisfying $\mu>0$ and $ 2\mu+ n\lambda >0$ (ensuring the ellipticity of the tensor $\C$). It generates a quadratic form given by
$$Q(e):=\frac12 \C e:e =\frac{\lambda}{2} (\tr e)^2 +\mu|e|^2\quad \text{ for all }e \in \Mnn.$$

If $e \in L^2(\Omega;\Rn)$, we define the elastic energy by
$$\mathcal Q(e):=\int_\Omega Q(e)\dx.$$

\medskip

\noindent $(H_3)$ {\bf Plastic properties.} We suppose that the stress is constrained to stay inside a fixed closed and convex set $\mathbf K \subset \Mnn$ containing $0$ in its interior. In particular, there exists $r>0$ such that 
\begin{equation}\label{eq:K}
\{\tau\in\Mnn : \; |\tau|\leq r\} \subset \mathbf K.
\end{equation}
The support function $H:\Mnn \to [0,+\infty]$ of $\mathbf K$ is defined by
$$H(q):=\sup_{\sigma \in \mathbf K}\sigma:q \quad \text{ for all }q \in \Mnn.$$
Note that according to \eqref{eq:K}, we get that
\begin{equation}\label{eq:H}
H(q) \geq r|q| \quad \text{ for all }q \in \Mnn.
\end{equation}

If $p \in \mathcal M_b(\Omega;\Mnn)$, we denote the convex function of measure $H(p)$ by
$$H(p):=H\left(\frac{\de p}{\de |p|}\right) |p|,$$
and we define the plastic dissipation by
$$\mathcal H(p):=\int_\Omega H\left(\frac{\de p}{\de |p|}\right)\de |p|.$$

\medskip

\noindent $(H_4)$ {\bf Dissipative boundary condition.} We suppose that the dissipative boundary condition is expressed by means of a boundary matrix 
\begin{equation}\label{1410191016}
S \in L^{\infty}(\dom; \Mnnp)
\end{equation} 
satisfying the following condition: there exists $c>0$ such that for $\hn$-a.e. $x \in \partial\Omega$ and all $z \in \Rn$
\begin{equation}\label{2711182136}
S(x)z\cdot z \geq c|z|^2\,.
\end{equation}

\medskip

 \noindent $(H_5)$ {\bf External forces.} We assume that the body is subjected to external body loads 
$$ f \in H^1(0,T; L^2(\Omega;\Rn))\,.$$

\medskip

\noindent $(H_6)$ {\bf Initial conditions.} Let $u_0 \in H^1(\Omega;\Rn)$, $v_0 \in H^2(\Omega;\Rn)$, $e_0\in  \Lnn$, $p_0 \in  \Lnn$ and $\sigma_0:= \C e_0\in \mathcal{K}(\Omega)$ be such that 
$$\begin{dcases}
\E u_0= e_0+p_0 \quad\text{a.e. in }\Omega\,,\\
S v_0 + \sigma_0 \nu = 0 \quad \hn\text{-a.e.\ on } \dom\,.
\end{dcases}$$

\begin{definition}\label{def:variational}
 Assume that $(H_1)$--$(H_4)$ are satisfied. A triple $(u, e, p)$ is a \emph{variational solution to the dynamic elasto-plastic model} associated to the initial data  $(u_0, v_0, e_0, p_0) \in L^2(\Omega;\Rn) \times L^2(\Omega;\Rn) \times L^2(\Omega;\Mnn) \times L^2(\Omega;\Mnn)$ and the source term $f \in L^\infty(0,T;L^2(\Omega;\Rn))$ if 
$$\begin{dcases}
u \in W^{2, \infty}(0,T; L^2(\Omega;\Rn)) \cap C^{0,1}([0,T]; BD(\Omega))\,,\\
e \in W^{1, \infty}(0,T; \Lnn)\,,\\
p \in C^{0,1}([0,T]; \mathcal M_b(\Omega;\Mnn))\,,
\end{dcases}
$$
$$\sigma:= \C e \in L^\infty(0,T;\Hdiv), \quad \sigma\nu \in L^\infty(0,T;L^2(\dom;\Rn)),$$
 and it satisfies the following properties:
\begin{itemize}
\item[1.] \emph{The initial conditions}:
$$u(0)=u_0, \quad \dot{u}(0)=v_0, \quad e(0)=e_0, \quad p(0)=p_0;$$
\item[2.] \emph{The additive decomposition}: for all $t \in [0,T]$,
$$\E u(t) = e(t) + p(t) \quad \text{in } \Mbn\,;$$
\item[3.] \emph{The equation of motion:}
$$\ddot{u}- \diver \sigma=f \quad \text{in } L^2(0,T; L^2(\Omega;\Rn))\,;$$
\item[4.] \emph{The relaxed dissipative boundary conditions}:
$${\rm P}_{-\mathbf K\nu}(S\dot u)  + \sigma \nu = 0\quad \text{in } L^2(0,T; L^2(\dom; \Rn)\,;$$
\item[5.] \emph{The stress constraint}: for every $t \in [0,T]$,
$$\sigma(t) \in \mathbf K \quad\text{a.e.\ in }\Omega\,;$$
\item[6.] \emph{The flow rule}:  for every $t \in [0,T]$,
$$H(\dot{p}(t)) = [\sigma(t) \colon \dot{p}(t)] \quad\text{in }\mathcal M_b(\Omega)\,.$$
\item[7.] \emph{The energy balance}: for every $t\in [0,T]$, 
\begin{equation}\label{en_balance}
\begin{split}
\frac12 &\int_\Omega |\dot{u}(t)|^2\dx+ \Q(e(t)) + \int_0^t \HH(\dot{p}(s)) \ds 
+   \int_0^t \int_\dom \psi(x, \dot{u}) \dh \ds \\ & + \frac{1}{2} \int_0^t \int_\dom S^{-1}(\sigma \nu) \cdot (\sigma \nu) \dh \ds 
=\frac12 \int_\Omega |v_0|^2\dx + \Q(e_0) + \int_0^t \int_\Omega f \cdot \dot{u} \dx \ds \,,
\end{split}
\end{equation}
where $\psi\colon \partial\Omega \times \R^n \to \Rn$ is defined, for $\hn$-a.e. $x \in \partial\Omega$ and all $z\in \R^n$, by
\begin{equation}\label{defPsi}
 \psi(x,z)  =  \inf_{z' \in \Rn} \left\{ \frac{1}{2} S(x) z' \cdot z' + H((z'-z)\odot \nu(x)) \right\}\,.
\end{equation}
\end{itemize}
\end{definition}

\begin{remark}
In the 
relaxed dissipative 
boundary conditions at point 4.\ above,  ${\rm P}_{-\mathbf K \nu(x)}$ stands for the orthogonal projection in $\R^n$ onto the closed and convex set $-\mathbf K\nu(x)$ in $\Rn$ with respect to the scalar product $\langle u,v\rangle_{S^{-1}(x)}:=S^{-1}(x)u \cdot v$. It precisely states that $\sigma\nu \in \mathbf K \nu$ a.e.\ on $\partial \Omega \times (0,T)$ and that $S\dot u$ has to be projected onto $-\mathbf K\nu$ to let
 the boundary conditions accommodate the stress constraint. 
\end{remark}

\begin{remark}
If the convex set $\mathbf K$ is of the form
\begin{equation}\label{eq:cylincric}
\mathbf K=\left\{\tau \in \Mnn : \; \tau_D:=\tau-\frac{\tr \sigma}{n}{\rm I_n} \in K\right\},
\end{equation}
where $K$ is a compact and convex set in $\mathbb M^n_D$ containing $0$ in its interior (as e.g. for Von Mises or Tresca models), then the plastic strain turns out to be a deviatoric measure, i.e.,
$$p \in C^{0,1}([0,T]; \mathcal M_b(\Omega;\mathbb M^n_D))$$
and the normal component of the dissipative boundary condition reads as
$$(S^{-1}\sigma\nu)\cdot \nu+\dot u\cdot \nu=0 \quad \text{ in }L^2(0,T;L^2(\dom;\Rn)).$$
In other words, since the stress constraint only acts on the deviatoric part of the Cauchy stress, then only shearing stresses are responsible of plastification. It follows that only the tangential component of the boundary condition (the one involving the boundary shearing stress) needs to be relaxed, and the normal component of the boundary condition (involving the boundary hydrostatic pressure) is not affected by the stress constraint.
\end{remark}

The present section is devoted to the proof of the following existence and uniqueness result. 

\begin{theorem}\label{teo:existence_variational}
Under the assumptions $(H_1)$--$(H_6)$, there exists a unique variational solution to the elasto--plastic problem associated to the initial data $(u_0, v_0, e_0, p_0)$ and the source term $f$ according to Definition~\ref{def:variational}.
\end{theorem}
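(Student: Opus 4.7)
\medskip
\noindent\textbf{Proof plan for Theorem~\ref{teo:existence_variational}.}

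\emph{Viscous regularization and a priori estimates.} The plan is to follow the strategy already announced in the Introduction: for each $\varepsilon>0$, introduce the Kelvin--Voigt/Perzyna regularization with Cauchy stress $\sigma_\varepsilon+\varepsilon\E\dot u_\varepsilon$, plastic strain rate $\dot p_\varepsilon=(\sigma_\varepsilon-\mathrm{P}_{\mathbf K}(\sigma_\varepsilon))/\varepsilon$, the boundary condition $S\dot u_\varepsilon+(\sigma_\varepsilon+\varepsilon\E\dot u_\varepsilon)\nu=0$, and the initial data of $(H_6)$. Existence and uniqueness for fixed $\varepsilon$ is standard (Proposition~\ref{teo:apprvisc}); what matters is to test the equation of motion by $\dot u_\varepsilon$ in order to derive the $\varepsilon$-energy identity, integrating by parts and using the approximate boundary condition. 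This yields uniform bounds on $\dot u_\varepsilon$ in $L^\infty(0,T;L^2(\Omega;\Rn))$, on $e_\varepsilon$ in $L^\infty(0,T;\Lnn)$, on $\sqrt\varepsilon\,\E\dot u_\varepsilon$ in $L^2$, on $\int_0^T\!\int_\Omega H(\dot p_\varepsilon)$, and on $\sqrt S\,\dot u_\varepsilon$ in $L^2(0,T;L^2(\dom;\Rn))$. Differentiating the equation in time (using the regularity of the data provided by $(H_5)$--$(H_6)$) gives the parallel bounds on $\ddot u_\varepsilon$, $\dot e_\varepsilon$ and $\dot p_\varepsilon$. Coercivity of $H$ (cf.~\eqref{eq:H}) turns the bound on $\int H(\dot p_\varepsilon)$ into a bound for $\dot p_\varepsilon$ in $L^\infty(0,T;\mathcal M_b(\Omega;\Mnn))$, and hence, via $\E\dot u_\varepsilon=\dot e_\varepsilon+\dot p_\varepsilon$, for $\dot u_\varepsilon$ in $L^\infty(0,T;BD(\Omega))$.

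\emph{Passage to the limit.} Extract a subsequence with $u_\varepsilon\wstar u$ in $W^{1,\infty}(0,T;BD(\Omega))$ and $W^{2,\infty}(0,T;L^2)$, $e_\varepsilon\wstar e$ in $W^{1,\infty}(0,T;\Lnn)$, $p_\varepsilon\wstar p$ in $W^{1,\infty}(0,T;\Mbn)$, and $\varepsilon\E\dot u_\varepsilon\to 0$ in $L^2$. The linear items in Definition~\ref{def:variational}---initial conditions (by Ascoli and the weak$^*$ continuity of traces at fixed $t$), additive decomposition, equation of motion---pass to the limit trivially; the convex stress constraint $\sigma_\varepsilon\to\sigma$ in $\mathbf K$ passes by weak closure, and $\diver\sigma\in L^2$ and the regularity of $\sigma\nu$ claimed in Definition~\ref{def:variational} are obtained from the limiting equation of motion together with the trace bound derived below from the energy inequality.

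\emph{Energy balance, flow rule, relaxed boundary condition.} This is where the core of the argument lies and where the difficulties announced in the Introduction concentrate. For the $\leq$ inequality in \eqref{en_balance}, rewrite the dissipative boundary term of the viscous balance in the symmetric form \eqref{EBeps}, and apply the lower-semicontinuity result \eqref{sciHintro} relative to the infimal convolution $\psi$ defined in \eqref{defPsi}; note that the weak$^*$ limit of the traces $u_\varepsilon(t)$ in $L^2(\dom;\Rn)$ need not equal the trace of $u(t)$, and it is exactly this mismatch that generates the boundary term involving $\psi$ and, a posteriori, the relaxation of the strong boundary condition. The $\geq$ inequality is the delicate part: a formal chain rule combining $H(\dot p)=\sigma:\dot p$ with the integration by parts formula and with the pointwise inequality $\psi(x,\dot u)+\tfrac12 S^{-1}(\sigma\nu)\!\cdot\!(\sigma\nu)\geq -(\sigma\nu)\!\cdot\!\dot u$ would close the energy balance, but neither the stress--strain duality up to the boundary nor the full integration by parts formula are available. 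To circumvent this, use the variational characterization \eqref{defPsi} of $\psi$ globally: for an arbitrary $w\in H^1(\Omega;\Rn)$, extend $\dot p(t)$ to $\ol\Omega$ by adding the boundary measure $(w-\dot u(t))\odot\nu\,\hn\mres\dom$, so that the problem reduces to one with Dirichlet datum $w$ on $\dom$, for which the results of \cite{BabMor15} apply: the convexity inequality \eqref{eq:conv-ineq} holds in $\mathcal M_b^+(\ol\Omega)$ and an integration by parts formula is at our disposal for fixed $w$. Taking the infimum over $w$ then produces precisely $\int_\dom\psi(x,\dot u)\dh$ and yields the upper inequality. The flow rule follows from \eqref{eq:conv-ineq} once \eqref{en_balance} is established as an equality; and the relaxed boundary condition $\mathrm{P}_{-\mathbf K\nu}(S\dot u)+\sigma\nu=0$ is then read off from the identification of the minimizer in the definition \eqref{defPsi} of $\psi$, using that $\sigma\nu\in\mathbf K\nu$ (ensured via a localization argument on the recovered boundary energy term).

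\emph{Uniqueness.} The Kato-type subtraction argument used in the classical setting would again require an integration by parts formula up to the boundary, which is unavailable here. Instead, argue by strict convexity of the total energy functional $(e,\dot u)\mapsto \mathcal Q(e)+\tfrac12\|\dot u\|_{L^2}^2$: given two variational solutions $(u_1,e_1,p_1)$ and $(u_2,e_2,p_2)$, consider the half-sum $((u_1+u_2)/2,(e_1+e_2)/2,(p_1+p_2)/2)$, which is still admissible in the sense of points 1--5 of Definition~\ref{def:variational}; comparing its energy with the mean of the two energy balances \eqref{en_balance}, and exploiting the strict convexity of $\mathcal Q$, the $L^2$-norm and of the boundary functional $\psi$, forces $e_1=e_2$ and $\dot u_1=\dot u_2$, hence $u_1=u_2$ by the initial condition and then $p_1=p_2$ by the additive decomposition. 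The main obstacle, as already anticipated, is the upper energy inequality: adapting the duality/integration by parts of \cite{BabMor15} to the present boundary situation via the auxiliary vector field $w$ is the technical heart of the proof.
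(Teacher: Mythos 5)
Your plan follows essentially the same route as the paper: visco--elasto--plastic (Kelvin--Voigt/Perzyna) regularization, weak energy estimates and passage to the limit in the linear and convex relations, lower semicontinuity via the infimal convolution $\psi$ for the lower energy inequality, Proposition~\ref{prop:IPP} (extension of $\dot p$ by $(w-\dot u)\odot\nu\,\hn\mres\dom$ and minimization over $w$) for the upper one, and the strict-convexity argument for uniqueness. The only things glossed over relative to the paper's Subsection~4.4 are (i) the explicit strong-compactness step (Subsection~4.4.2), which is what allows the cross term $\int\sigma_\varepsilon:(\nabla\varphi\odot\dot u_\varepsilon)$ and the boundary terms to pass in the \emph{localized} balance, and (ii) the role of the limit boundary condition $\sigma\nu+S\dot w=0$ (with $\dot w$ the weak $L^2(\partial\Omega)$ limit of the traces): the relaxed condition comes from combining this with the identification $\dot w=S^{-1}\mathrm P_{-\mathbf K\nu}(S\dot u)$ extracted from the saturated energy balance, and $\sigma\nu\in\mathbf K\nu$ is then a consequence rather than an input as your phrasing suggests.
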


The result is obtained starting from a elasto--visco--plastic approximation, introduced in Subsection~\ref{subsec:visc}.  Subsection~\ref{subsec:4.3} collects some preliminary tools, in view of the existence and uniqueness proof, performed in Subsection~\ref{subsec:4.4}. 
 
\subsection{The visco-elastoplastic model}\label{subsec:visc}

The visco-elastoplastic regularization consists in adding a visco--elastic diffusion term of Kelvin--Voigt type in the constitutive law, and a visco--plastic term of Perzyna type in the plastic flow rule penalizing the stress constraint. This type of models has been considered
\begin{itemize}
\item in \cite{Suq81,BabMor15,DMSca} for variational models as in \eqref{0110181923} but with different boundary conditions; 
\item in \cite{BabMifSeg16, DepLagSeg11} for constrained Friedrichs' systems in the whole space;
\item in \cite{BM17} for a simplified antiplane elastoplastic model with dissipative boundary conditions.
\end{itemize} 
One deduces the following existence and uniqueness result arguing exactly as done for the antiplane simplified case of \cite{BM17,BabMor15}.

\begin{proposition}\label{teo:apprvisc}
Assume that $(H_1)$--$(H_6)$ are satisfied. For each $\varepsilon>0$, we define $g_\varepsilon:= \varepsilon E v_0 \, \nu \in L^2(\dom; \Rn)$. Then, there exists a unique triple $(u_\varepsilon, e_\varepsilon, p_\varepsilon)$ with
\begin{equation*}
\begin{dcases}
u_\varepsilon \in W^{2, \infty}(0,T; L^2(\Omega;\Rn)) \cap H^2(0,T; H^1(\Omega;\Rn))\,,\\
e_\varepsilon \in W^{1, \infty}(0,T; L^2(\Omega; \Mnn))\,,\\
p_\varepsilon \in H^1(0,T; L^2(\Omega; \Mnn))\,,
\end{dcases}
\end{equation*}
which satisfies the following properties:
\begin{itemize}
\item[1.] \emph{The initial conditions}:
$$u_\varepsilon(0)=u_0, \quad \dot{u}_\varepsilon(0)=v_0, \quad e_\varepsilon(0)=e_0, \quad p_\varepsilon(0)=p_0;$$
\item[2.] \emph{The additive decomposition}:
$$\E u_\varepsilon = e_\varepsilon + p_\varepsilon\quad \text{a.e. in } \Omega \times (0,T)\,;$$
\item[3.] \emph{The constitutive law}:
$$\sigma_\varepsilon = \C e_\varepsilon \quad \text{a.e. in } \Omega \times (0,T)\,;$$
\item[4.] \emph{The equation of motion}:
$$\ddot{u}_\varepsilon- \diver(\sigma_\varepsilon + \varepsilon \E \dot{u}_\varepsilon)=f \quad \text{in }L^2(0,T; L^2(\Omega;\Rn))\,;$$
\item[5.] \emph{The dissipative boundary conditions}:
$$S \dot{u}_\varepsilon + (\sigma_\varepsilon + \varepsilon \E \dot{u}_\varepsilon) \nu = g_\varepsilon \quad \text{in }L^2(0,T; L^2(\dom; \Rn))\,;$$
\item[6.] \emph{The visco--plastic flow rule}:
$$\dot{p}_\varepsilon=\frac{\sigma_\varepsilon- \mathrm{P}_{\mathbf K}(\sigma_\varepsilon)}{\varepsilon}\quad  \text{a.e. in } \Omega \times (0,T)\,;$$
\item[7.] \emph{The energy balance}: for every $t\in [0,T]$,
\begin{multline}\label{en_balan_visc}
\frac12 \int_\Omega |\dot u_\varepsilon(t)|^2\dx + \Q(e_\varepsilon(t)) + \int_0^t \int_\Omega H(\dot{p}_\varepsilon) \dx\ds\\
 +\int_0^t \int_\dom S \dot{u}_\varepsilon \cdot \dot{u}_\varepsilon \dh \ds
+\varepsilon \int_0^t \int_\Omega  |\E \dot{u}_\varepsilon|^2\dx\ds + \varepsilon \int_0^t \int_\Omega |\dot{p}_\varepsilon|^2 \dx\ds  \\
 =\frac12\int_\Omega |v_0|^2\dx + \Q(e_0) + \int_0^t \int_\Omega f \cdot \dot{u}_\varepsilon \dx \ds + \int_0^t \int_{\dom} g_\varepsilon \cdot \dot{u}_\varepsilon \dh \ds\,.
\end{multline}
\end{itemize}
Moreover, the following uniform estimate holds
\begin{multline}\label{estimate_viscosa}
 \|\ddot{u}_\varepsilon\|_{L^\infty(0,T;L^2(\Omega;\Rn))}^2 {+}\|\dot{e}_\varepsilon\|_{L^\infty(0,T;L^2(\Omega;\Mnn))}^2  \\+\varepsilon \|\E \ddot{u}_\varepsilon\|_{L^2(0,T;L^2(\Omega;\Mnn))}^2 {+} \int_0^T \hspace{-0.5em} \int_{\dom} \hspace{-0.7em} S \ddot{u}_\varepsilon \cdot \ddot{u}_\varepsilon \dh \mathrm{d}t 
 \leq C ,
\end{multline}
for some constant $C>0$ independent of $\varepsilon$ and $S$.
\end{proposition}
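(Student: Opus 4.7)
The plan is to construct the solution by a Galerkin approximation in the spirit of \cite{BM17,BabMor15}, extended from the antiplane setting to the present vectorial one. For fixed $\varepsilon>0$, I would choose a Hilbertian basis $\{w_k\}$ of $H^1(\Omega;\Rn)$ (for instance by spectral decomposition of $-\mathrm{div}\,(\mathbf A \E \cdot)$ with boundary condition $S u + \sigma\nu = g_\varepsilon$, which is coercive on $H^1(\Omega;\Rn)$ owing to Korn's inequality and to $(H_4)$), project the equation of motion onto the finite dimensional spaces $V_N=\mathrm{span}\{w_1,\dots,w_N\}$, and couple it with the visco-plastic flow rule
$\dot p_\varepsilon=(\sigma_\varepsilon-\mathrm{P}_{\mathbf K}(\sigma_\varepsilon))/\varepsilon$
read as an ODE in $L^2(\Omega;\Mnn)$: since $\mathrm{P}_{\mathbf K}$ is $1$-Lipschitz, the right-hand side is globally Lipschitz in $(\E u_\varepsilon^N,p_\varepsilon^N)$, so the Cauchy-Lipschitz theorem produces a unique finite-dimensional solution on $[0,T]$. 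The initial conditions are decomposed onto $V_N$ using $(H_6)$ (in particular $v_0\in H^2$ and the compatibility $Sv_0+\sigma_0\nu=0$ are needed to make the boundary approximation consistent and to get the higher order estimate below).

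To pass to the limit $N\to\infty$, I would derive the \emph{first energy identity} by multiplying the approximate equation of motion by $\dot u_\varepsilon^N$, integrating by parts with the approximate boundary condition, and using the crucial algebraic identity
\[
\sigma_\varepsilon^N:\dot p_\varepsilon^N \;=\; H(\dot p_\varepsilon^N)+\varepsilon|\dot p_\varepsilon^N|^2,
\]
which follows from $\sigma_\varepsilon^N=\mathrm P_{\mathbf K}(\sigma_\varepsilon^N)+\varepsilon\dot p_\varepsilon^N$ together with $\mathrm P_{\mathbf K}(\sigma_\varepsilon^N)\in \mathbf K$ and $\varepsilon\dot p_\varepsilon^N\in N_{\mathbf K}(\mathrm P_{\mathbf K}(\sigma_\varepsilon^N))$ (whence $\mathrm P_{\mathbf K}(\sigma_\varepsilon^N):\dot p_\varepsilon^N=H(\dot p_\varepsilon^N)$). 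Together with $(H_4)$, which gives coercivity on the boundary term, and Young's inequality on $\int f\cdot\dot u_\varepsilon^N$ and $\int g_\varepsilon\cdot\dot u_\varepsilon^N$, this yields uniform ($N$-independent) bounds on $\dot u_\varepsilon^N$ in $L^\infty(0,T;L^2)\cap L^2(0,T;H^1)$, on $e_\varepsilon^N$ in $L^\infty(0,T;L^2)$, on $\dot p_\varepsilon^N$ in $L^2(0,T;L^2)$, and on the boundary trace in $L^2(0,T;L^2(\dom))$, allowing the usual weak/weak-$*$ extraction, strong $L^2$ convergence of $u_\varepsilon^N$, passage to the limit (the nonlinearity $\mathrm P_{\mathbf K}$ is Lipschitz so stable under $L^2$ convergence), and identification of the limit with a solution of items~1--7.

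For the \emph{second (uniform in $\varepsilon$) estimate} \eqref{estimate_viscosa}, I would formally differentiate the equation of motion and the boundary condition in time (which is legitimate at the Galerkin level because $\dot f\in L^2(0,T;L^2)$ by $(H_5)$ and the data are compatible by $(H_6)$: the initial acceleration is controlled in $L^2$ because $\mathrm{div}\,\sigma_0\in L^2$ by $v_0\in H^2$ and $\sigma_0=\mathbf Ae_0\in H^1$ can be chosen consistently, and $Sv_0+\sigma_0\nu=0$ makes the time-differentiated boundary condition $S\ddot u_\varepsilon+(\dot\sigma_\varepsilon+\varepsilon \E\ddot u_\varepsilon)\nu=0$ meaningful at $t=0$). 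Testing the differentiated equation with $\ddot u_\varepsilon$ produces the terms $\tfrac12\tfrac{\de}{\de t}\int|\ddot u_\varepsilon|^2$, $\tfrac12\tfrac{\de}{\de t}\int\mathbf A^{-1}\dot\sigma_\varepsilon:\dot\sigma_\varepsilon$, $\varepsilon\int|\E\ddot u_\varepsilon|^2$, the nonnegative boundary term $\int_\dom S\ddot u_\varepsilon\cdot\ddot u_\varepsilon$, plus a cross term $\int\dot\sigma_\varepsilon:\ddot p_\varepsilon$. The key point is that this cross term has the right sign: since $t\mapsto \sigma_\varepsilon(t)-\mathrm P_{\mathbf K}(\sigma_\varepsilon(t))=\varepsilon\dot p_\varepsilon(t)$ is the monotone part (as $I-\mathrm P_{\mathbf K}$ is $1$-Lipschitz and monotone), a difference-quotient argument yields
\[
\int_0^t\!\!\int_\Omega\dot\sigma_\varepsilon:\ddot p_\varepsilon\,\dx \de s \;\geq\; 0,
\]
as in the derivation of \cite[Proposition~3.1]{BabMor15}. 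Grönwall's lemma, absorbing the term $\int\dot f\cdot\ddot u_\varepsilon$, gives \eqref{estimate_viscosa} with a constant independent of $\varepsilon$ (and, by inspection of the constants coming from $(H_4)$, also of $S$ provided only $\|S^{-1}\|_{L^\infty}$ is controlled, which is the content of the statement).

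Uniqueness for fixed $\varepsilon$ is obtained by taking the difference of two solutions with the same data, testing by the difference of velocities, and exploiting the monotonicity of $\sigma\mapsto\sigma-\mathrm P_{\mathbf K}(\sigma)$ together with the positivity of $S$ and of the viscosity $\varepsilon\E\dot u$; Grönwall closes the argument. The main obstacle I anticipate is not the abstract Galerkin machinery, which is classical once the functional setting is set, but rather the bookkeeping of the compatibility of the initial data at $t=0$: in particular, verifying that $\ddot u_\varepsilon(0)$ belongs to $L^2(\Omega;\Rn)$ uniformly in $\varepsilon$ requires using $(H_6)$ to write $\ddot u_\varepsilon(0)=f(0)+\mathrm{div}(\sigma_0+\varepsilon\E v_0)$ and checking through a lifting of $g_\varepsilon=\varepsilon \E v_0\,\nu$ that the boundary terms do not blow up; this is exactly where the assumption $v_0\in H^2(\Omega;\Rn)$ of $(H_6)$, together with $Sv_0+\sigma_0\nu=0$, is used.
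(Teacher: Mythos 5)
Your proposal is correct in spirit but takes a genuinely different route from the paper. The paper (following \cite[Subsection~3.4.1]{MifThese} and \cite{BabMor15}) proceeds by \emph{implicit time discretization}: a uniform time grid is fixed, and the approximate states $(u_i,e_i,p_i)$ are defined recursively as the unique minimizers of a convex incremental functional over the affine constraint set $X=\{\E v = \eta + q\}$. Quadratic-in-time interpolations are then built, a priori estimates are derived from the Euler--Lagrange equations of the incremental problems, and one passes to the limit as the time step vanishes. You instead propose a \emph{Galerkin discretization in space} (spectral basis of a Robin problem for $-\diver(\C\E\cdot)$), coupled to the Perzyna flow rule seen as a Lipschitz ODE in $L^2(\Omega;\Mnn)$, with Cauchy--Lipschitz giving the approximate solution. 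The core ingredients you invoke — the algebraic identity $\sigma_\varepsilon:\dot p_\varepsilon=H(\dot p_\varepsilon)+\varepsilon|\dot p_\varepsilon|^2$ via $\sigma_\varepsilon-P_{\mathbf K}(\sigma_\varepsilon)\in N_{\mathbf K}(P_{\mathbf K}(\sigma_\varepsilon))$, the time-differentiated energy estimate with $\dot g_\varepsilon=0$ and the monotonicity of $\sigma\mapsto\sigma-P_{\mathbf K}(\sigma)$ giving $\int_0^t\int_\Omega\dot\sigma_\varepsilon:\ddot p_\varepsilon\geq 0$ through a difference-quotient argument, the compatibility conditions from $(H_6)$ to bound $\ddot u_\varepsilon(0)$ in $L^2$ uniformly in $\varepsilon$ and $S$, and the monotonicity argument for uniqueness — are all sound and would appear in essentially the same form in either scheme.

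What each approach buys: the paper's incremental minimization makes the convexity structure explicit and dovetails with the variational toolbox used throughout the rest of the paper (Reshetnyak lower semicontinuity, convex functions of measures, strict convexity for uniqueness in the $\varepsilon\to0$ limit), at the cost of needing $S$ symmetric so that minimality is equivalent to the Euler--Lagrange equation, as the authors point out. Your Galerkin route is more along classical lines for nonlinear wave equations: the flow rule is an honest Lipschitz ODE, local existence is automatic, and — notably — the symmetry of $S$ is not actually needed for the boundary term $\int_\dom S\dot u_\varepsilon^N\cdot\dot u_\varepsilon^N$ to be coercive (only the positivity of its symmetric part matters). So your scheme could in principle produce the viscous solution for nonsymmetric $S$; the symmetry hypothesis $(H_4)$ is then only forced by the limiting argument and the relaxation of the boundary condition, not by the approximation itself. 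Two small points to tighten: the Hilbertian basis should be built from the homogeneous Robin operator $-\diver(\C\E\cdot)$, $Su+(\C\E u)\nu=0$, not the inhomogeneous one with $g_\varepsilon$; and in the $S$-independence claim you should not need to invoke $\|S^{-1}\|_{L^\infty}$ at all, since the statement only controls $\int_\dom S\ddot u_\varepsilon\cdot\ddot u_\varepsilon$ (not the $L^2(\dom)$ norm of $\ddot u_\varepsilon$ itself), and neither the data nor the absorbed term $\int\dot f\cdot\ddot u_\varepsilon$ carry any $S$-dependence.
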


We do not give a proof of this result which closely follows the arguments in \cite[Subsection~3.4.1]{MifThese}, to which we refer for details with just some formal modifications to pass from the antiplane case setting to the present general one (see also \cite{BabMor15} in the vectorial case but with different boundary conditions). We just point out the general method. The starting point consists in defining suitable time discrete 
approximating solutions. For any $N \in \mathbb N$, large enough, we consider a discretisation $\{t_i\}_{i=0}^N:=\{i\frac{T}{N}\}_{i=0}^{N}$ of the time interval $[0,T]$, and approximate the values of $(u, e, p)$ at the nodes of the discretisation as follows. Setting $\delta:=\frac{T}{N}$ the time step, we start by the initial values $(u_0, e_0, p_0)$, $(u_1, e_1, p_1):=  (u_0, e_0, p_0) + \delta (v_0, 0, \E u_0)$ and define by induction, for $i\geq 2$ ,
$$(u_i, e_i, p_i) \in X:= \{(v, \eta, q) \in H^1(\Omega;\Rn)\times \Lnn \times \Lnn \colon\; \E v = \eta + q\text{ in }\Omega\}$$
as the unique minimizer over $X$ of
\begin{multline*}
(v,\eta,q)\mapsto\mathcal{Q}(\eta)  {+} \int_\Omega H(q-p_{i-1}) \dx {+} \frac{1}{2 \delta^2} \int_\Omega (v {-} 2 u_{i-1} {+} u_{i-2})^2 \dx \\
{+} \frac{\varepsilon}{2 \delta} \int_\Omega \big( |\E v - \E u_{i-1}|^2 {+} |q-p_{i-1}|^2\big) \dx {+} \frac{1}{2\delta} \int_{\partial \Omega} S (v-u_{i-1}) \cdot (v-u_{i-1}) \dh\\
 {-} \int_\Omega f(t_{i-1}) \cdot v \dx {-} \int_{\partial \Omega} g_\varepsilon \cdot v \dh\,.
\end{multline*}
We use the discrete solutions $\{(u_i, e_i, p_i)\}_{i=0}^N$ to construct $N$-dependent quadratic interpolations $(u_N(t),e_N(t),p_N(t))$, which can be shown to satisfy suitable \emph{a priori} estimates and then converge in a weak sense as $N \to +\infty$ towards the weak notion of evolution. Then it is possible to prove uniqueness, and \emph{a posteriori} regularity estimates that guarantees the properties stated in Proposition~\ref{teo:apprvisc}.

We notice that the matrix $S$ has to be symmetric in order to guarantee the minimality of the incremental minimization problems above to be equivalent to the corresponding Euler--Lagrange equation, which is in turn a key tool, e.g.\ in the derivation of the \emph{a priori} estimates. 

\subsection{Some technical results}\label{subsec:4.3}

In the  sequel, $\nu(x)$ denotes the  outward  unit normal to $\dom$ at $x \in \dom$, which is defined $\hn$-almost everywhere in $\dom$ by the Lipschitz regularity of $\dom$.  Let us define the functions $f$ and $h:\partial \Omega \times \R^n \to \R$ by setting, for $\hn$-a.e. $x \in \dom$ and all $z \in \Rn$, 
$$f(x,z) :=  \frac12 S(x) z\cdot z$$
and
$$h(x,z) :=  H(-z\odot \nu(x))=\sup_{\sigma \in \mathbf K} -\sigma:(z \odot \nu(x))=\sup_{\sigma \in \mathbf K} -(\sigma\nu(x))\cdot z=(\I_{-\mathbf K \nu(x)})^*(z).$$

We next remark that $\psi:\partial \Omega \times \R^n \to \R$, introduced in \eqref{defPsi}, is  the inf-convolution (with respect to the second variable) of the functions $f(x,\bullet)$ and $h(x,\bullet)$ for $\hn$-a.e. $x \in \dom$.

\begin{remark}
By standard properties of the inf-convolution and of the convex conjugate (see e.g.\ \cite[Theorem 16.4]{Rock}), we have
$$\psi(x, \bullet) = f(x, \bullet)  \boxempty h(x, \bullet)  =\Big( f^*(x,\bullet) + h^*(x,\bullet)\Big)^*.$$
As a consequence, for $\hn$-a.e. $x \in \dom$ and all $z \in \Rn$,
\begin{eqnarray*}
\psi(x,z) & = & \sup_{q \in -\mathbf K \nu(x)} \left\{ z \cdot q - \frac12 S^{-1}(x) q \cdot q  \right\} \\
& = &  \frac{1}{2} S(x) z \cdot z - \frac12 \inf_{q \in -\mathbf K \nu(x)}S^{-1}(x) \big(q- S(x) z\big) \cdot \big(q- S(x) z\big) \\
& = &  \frac{1}{2} S(x) z \cdot z - \frac{1}{2} \| \mathrm{P}_{-\mathbf K \nu(x)} \big(S(x)z \big) - S(x) z \|_{S^{-1}(x)}^2\,,
\end{eqnarray*}
where ${\rm P}_{-\mathbf K \nu(x)}$ stands for the orthogonal projection in $\R^n$ onto the closed and convex set $-\mathbf K\nu(x)$ in $\Rn$ with respect to the Euclidean scalar product $\langle u,v\rangle_{S^{-1}(x)}:=S^{-1}(x)u \cdot v$. It follows that, for $\hn$-a.e. $x \in \partial \Omega$, the function $\psi(x,\cdot)$ is of class $C^{1,1}$ in $\Rn$ because
\begin{equation}\label{eq:dzpsi}
D_z \psi(x, z)=  S(x)z -\left(S(x)z-P_{-\mathbf K\nu(x)}(S(x)z)\right) =P_{-\mathbf K\nu(x)}(S(x)z),
\end{equation}
and the projection is Lipschitz continuous.
\end{remark}

In the following lemma, we prove an explicit expression for the element realizing the minimum value in the infimal convolution defining the function $\psi$.

\begin{lemma}\label{lem:infconv}
For $\hn$-a.e. $x \in \dom$ and all $z \in \Rn$, there exists a unique $\bar z=\bar z(x,z) \in \Rn$ such that
$$\psi(x,z)= \frac{1}{2} S(x) (z-\bar z) \cdot (z-\bar z) + h(x,\bar z).$$
In addition, we have $z-\bar z=S^{-1}(x)D_z \psi(x,z)$.
\end{lemma}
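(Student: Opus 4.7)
The plan is to recast the infimal convolution in its natural minimization form by the change of variables $w := z - z'$, so that
\[
\psi(x,z) = \inf_{w \in \R^n} F_{x,z}(w), \qquad F_{x,z}(w) := \tfrac12 S(x)(z-w)\cdot(z-w) + h(x,w).
\]
First I would establish that $F_{x,z}$ admits a unique minimizer $\bar z$. Since $h(x,\cdot)$ is a support function it is proper, convex, lower semicontinuous and nonnegative; combined with the uniform ellipticity \eqref{2711182136} of $S(x)$, $F_{x,z}$ is lower semicontinuous, strictly convex and coercive ($F_{x,z}(w)\ge \tfrac{c}{2}|z-w|^2$). Standard direct methods yield existence and uniqueness of $\bar z=\bar z(x,z)$, and by construction
\[
\psi(x,z) = \tfrac12 S(x)(z-\bar z)\cdot(z-\bar z) + h(x,\bar z),
\]
which is the first claim.

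Next I would derive the identity $z - \bar z = S^{-1}(x) D_z\psi(x,z)$ via first-order optimality. Writing Fermat's rule at $\bar z$ for the convex function $F_{x,z}$ (sum rule applies since the quadratic part is everywhere differentiable) gives
\[
0 \in -S(x)(z-\bar z) + \partial_w h(x,\bar z), \qquad \text{i.e.,}\qquad S(x)(z-\bar z) \in \partial_w h(x,\bar z).
\]
Since $h(x,\cdot)$ is the support function of $-\mathbf K\nu(x)$, its subdifferential at $\bar z$ is exactly $\{q\in -\mathbf K\nu(x) : q\cdot \bar z = h(x,\bar z)\}$. Thus $S(x)(z-\bar z)\in -\mathbf K\nu(x)$ and realizes the maximum of $q\mapsto q\cdot \bar z$ on $-\mathbf K\nu(x)$.

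Finally I would identify $S(x)(z-\bar z)$ with $P_{-\mathbf K\nu(x)}(S(x)z)$, where the projection is with respect to $\langle\cdot,\cdot\rangle_{S^{-1}(x)}$. Setting $P := S(x)(z-\bar z)$ and $a := S(x)z$, note $S^{-1}(x)(a-P) = \bar z$. The variational characterization of the $S^{-1}(x)$-projection onto the closed convex set $-\mathbf K\nu(x)$ reads $\bar z\cdot(q-P)\le 0$ for every $q\in -\mathbf K\nu(x)$, which is exactly the condition $P\in -\mathbf K\nu(x)$ together with $P\cdot\bar z = h(x,\bar z)$ obtained above. Hence $P=P_{-\mathbf K\nu(x)}(S(x)z)$, and combining with the explicit formula \eqref{eq:dzpsi} yields
\[
S(x)(z-\bar z) = D_z\psi(x,z), \qquad\text{i.e.,}\qquad z-\bar z = S^{-1}(x) D_z\psi(x,z).
\]
No step is truly delicate; the one point requiring care is the use of the subdifferential of the nonsmooth function $h(x,\cdot)$ (rather than a gradient) to pass from optimality to the projection characterization.
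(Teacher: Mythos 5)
Your proof is correct, and the first part (existence and uniqueness of $\bar z$ via strict convexity, coercivity, and lower semicontinuity) is the same as the paper's. For the identity $z-\bar z=S^{-1}(x)D_z\psi(x,z)$ you take a genuinely different route: you write Fermat's rule $S(x)(z-\bar z)\in\partial_w h(x,\bar z)$, use the characterization of the subdifferential of a support function as an exposed face, verify the variational inequality $\bar z\cdot(q-P)\le 0$ to identify $P=S(x)(z-\bar z)$ with $P_{-\mathbf K\nu(x)}(S(x)z)$, and then close the loop by invoking the explicit formula \eqref{eq:dzpsi}. The paper instead argues directly on the infimal convolution: it compares $\psi(x,z)$ with $\psi(x,z-t\xi)$ by testing the latter with the same $\bar z$, divides by $t$, and lets $t\to 0$, obtaining $D_z\psi(x,z)\cdot\xi\ge S(x)(z-\bar z)\cdot\xi$ for all $\xi$ and hence equality. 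The paper's argument is more elementary and only uses that $\psi(x,\cdot)$ is differentiable (not the explicit projection formula), so it is slightly more self-contained; your argument leans on the full convex-analysis machinery and on \eqref{eq:dzpsi}, but in exchange it simultaneously re-derives the projection identity $S(x)(z-\bar z)=P_{-\mathbf K\nu(x)}(S(x)z)$ from first-order optimality, which makes the geometric meaning of $\bar z$ more transparent. Both arguments are sound; the one care point you correctly flag is that $\partial_w h(x,\bar z)$ must be nonempty, which is fine since $\bar z$ minimizes $F_{x,z}$ and therefore $h(x,\bar z)<+\infty$.
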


\begin{proof}
Since the function $z' \mapsto  \frac{1}{2} S(x) (z-z') \cdot (z-z')+ h(x,z')$ is continuous, coercive and strictly convex, it admits a unique minimum point $\bar z \in \Rn$ which thus satisfies
$$\psi(x,z)= \frac{1}{2} S(x) (z-\bar z) \cdot (z-\bar z) + h(x,\bar z).$$

Let $t>0$ and $\xi \in \Rn$. Since $\psi(x,z-t\xi) \leq  \frac{1}{2} S(x) (z-t\xi-\bar z) \cdot (z-t\xi-\bar z) + h(x,\bar z)$ then
\begin{eqnarray*}
\frac{\psi(x,z)-\psi(x,z-t\xi)}{t} & \geq & \frac{S(x) (z-\bar z) \cdot (z-\bar z) -S(x) (z-t\xi-\bar z) \cdot (z-t\xi-\bar z) }{2t}\\
& = & S(x)(z-\bar z)\cdot \xi - \frac{t}{2} S(x)\xi\cdot \xi.
\end{eqnarray*}
Passing to the limit as $t \to 0$ and using that $\psi(x,\bullet)$ is differentiable, we get that $D_z \psi(x,z)\cdot \xi \geq S(x)(z-\bar z)\cdot \xi$, and since $\xi$ is arbitrary,
$D_z \psi(x,z)=S(x)(z-\bar z),$
which completes the proof.
\end{proof}

From the previous lemma, we deduce the following measurable selection result when the argument of $\psi$ is not anymore a constant but a function defined on $\dom$.

\begin{lemma}\label{lem:meas-select}
For a given $u \in L^2(\dom;\Rn)$, there exists a unique function $w \in L^2(\partial \Omega;\Rn)$ such that, for $\hn$-a.e. $x \in \dom$,
$$\psi(x,u(x)) =\frac{1}{2} S(x) w(x) \cdot w(x) + H\big((w(x)-u(x)) \odot \nu(x)\big) .$$
\end{lemma}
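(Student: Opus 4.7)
\medskip

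\noindent\textbf{Proof plan.} The idea is to build $w$ pointwise via Lemma \ref{lem:infconv} and then verify measurability and the $L^2$ bound using the explicit formula for the minimizer.

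For $\mathcal H^{n-1}$-a.e. $x \in \partial\Omega$ (so that $\nu(x)$, $S(x)$ and $u(x)$ are defined), I would apply Lemma \ref{lem:infconv} with the choice $z = u(x)$ to obtain a unique $\bar z(x) \in \R^n$ realizing
$$\psi(x,u(x)) = \tfrac12 S(x)(u(x)-\bar z(x))\cdot(u(x)-\bar z(x)) + h(x,\bar z(x)),$$
and then set $w(x) := u(x) - \bar z(x)$. Using $h(x,\bar z(x)) = H(-\bar z(x)\odot\nu(x)) = H((w(x)-u(x))\odot\nu(x))$, this is exactly the pointwise identity in the statement. Moreover, the relation $z-\bar z = S^{-1}(x) D_z\psi(x,z)$ from Lemma \ref{lem:infconv} and the formula for $D_z\psi$ preceding it yield the explicit representation
$$w(x) = S^{-1}(x)\,D_z\psi(x,u(x)) = S^{-1}(x)\, \mathrm P_{-\mathbf K\nu(x)}\!\bigl(S(x)u(x)\bigr),$$
where $\mathrm P_{-\mathbf K\nu(x)}$ denotes the projection onto $-\mathbf K\nu(x)$ with respect to the scalar product $\langle\cdot,\cdot\rangle_{S^{-1}(x)}$.

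For measurability, I would observe that $x\mapsto S(x)$ and $x\mapsto \nu(x)$ are measurable on $\partial\Omega$, while $z\mapsto D_z\psi(x,z)$ is Lipschitz continuous for each $x$ (uniformly in $x$ by $(H_4)$). Consequently the Carath\'eodory map $(x,z)\mapsto D_z\psi(x,z)$ composed with the measurable function $u$ yields a measurable function $x\mapsto D_z\psi(x,u(x))$, and multiplication by the measurable matrix field $S^{-1}$ preserves measurability. Thus $w$ is measurable. For the $L^2$ bound, I would use that $0\in\mathbf K$ implies $0\in -\mathbf K\nu(x)$, so that $\mathrm P_{-\mathbf K\nu(x)}$ is a contraction with respect to $\|\cdot\|_{S^{-1}(x)}$ toward the origin. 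This gives, pointwise,
$$S(x)w(x)\cdot w(x) = \bigl\langle \mathrm P_{-\mathbf K\nu(x)}(S(x)u(x)),\mathrm P_{-\mathbf K\nu(x)}(S(x)u(x))\bigr\rangle_{S^{-1}(x)} \leq \bigl\langle S(x)u(x),S(x)u(x)\bigr\rangle_{S^{-1}(x)} = S(x)u(x)\cdot u(x).$$
Combining this with the uniform ellipticity $S(x)w\cdot w \geq c|w|^2$ from \eqref{2711182136} and the bound $S(x)u\cdot u \leq \|S\|_{L^\infty}|u|^2$, I obtain $|w(x)|^2 \leq (\|S\|_{L^\infty}/c)|u(x)|^2$, so $w\in L^2(\partial\Omega;\R^n)$.

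Uniqueness is immediate: if $w_1,w_2\in L^2(\partial\Omega;\R^n)$ both satisfy the identity, then for $\mathcal H^{n-1}$-a.e.\ $x$ each $w_i(x)$ attains the infimum defining $\psi(x,u(x))$; but the integrand $z'\mapsto \tfrac12 S(x)z'\cdot z' + H((z'-u(x))\odot\nu(x))$ is strictly convex (the quadratic term is strictly convex by positive definiteness of $S(x)$ and the second term is convex), so the minimizer is unique, whence $w_1=w_2$ a.e. The only mildly delicate point in the whole argument is the measurability statement, and that is handled cleanly by the Carath\'eodory property of $D_z\psi$ combined with the explicit formula above; no deeper measurable selection theorem is required.
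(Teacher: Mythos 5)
Your proof is correct and follows essentially the same route as the paper's: build $w$ pointwise from Lemma~\ref{lem:infconv}, identify it with $S^{-1}D_z\psi(x,u(x))$ for Carathéodory measurability, and deduce the $L^2$ bound from $Sw\cdot w \leq Su\cdot u$. The only cosmetic difference is that you derive the pointwise bound from nonexpansiveness of the metric projection, while the paper gets the same inequality by noting $\tfrac12 Sw\cdot w \leq \psi(x,u) \leq \tfrac12 Su\cdot u$ (the latter by testing the infimum with $z'=u(x)$ and using $H(0)=0$); both are immediate.
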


\begin{proof}
For $\hn$-a.e. $x \in \partial \Omega$, let $w(x):=S^{-1}(x)D_z \psi(x,u(x))$. Since $(x,z) \mapsto D_z\psi(x,z)$ is a Carath\'eodory function, we deduce that the function $w:\partial\Omega \to \Rn$ is $\hn$-measurable. In addition, according to Lemma \ref{lem:infconv}, we have
$$\psi(x,u(x)) =\frac{1}{2} S(x) w(x) \cdot w(x) + H\big((w(x)-u(x)) \odot \nu(x)\big).$$
Finally, since $H$ is nonnegative and vanishes in $0$, we have that for $\hn$-a.e. $x \in \partial \Omega$,
$$\frac{c}{2} |w(x)|^2 \leq \frac12 S(x)w(x)\cdot w(x) \leq \psi(x,u(x)) \leq \frac12 S(x)u(x)\cdot u(x)$$
 using \eqref{2711182136},  which shows that $w \in L^2(\partial \Omega;\Rn)$ since $u \in L^2(\partial \Omega;\Rn)$ and by \eqref{1410191016}.
\end{proof}

We now prove that the integral functional associated to the density $\psi$ can be globally expressed as an infimum.

\begin{lemma}\label{lem:inf-ext}
For a given $u \in L^2(\dom;\Rn)$,  it holds that 
$$\int_{\partial \Omega} \psi(x,u)\dh  =  \inf_{z \in C^\infty_c(\Rn;\Rn)}\bigg\{ \int_{\partial \Omega}\left(\frac12 Sz\cdot z + H\big((z-u)\odot\nu\big)\right)\dh\bigg\} .$$
\end{lemma}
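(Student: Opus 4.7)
The plan is to prove the two inequalities separately. The bound $\leq$ is immediate from the pointwise definition of $\psi$ as an infimum: for every $z \in C^\infty_c(\Rn;\Rn)$, taking $z' = z(x)$ in the infimum defining $\psi(x,u(x))$ yields
\[
\psi(x,u(x)) \leq \tfrac12 S(x) z(x) \cdot z(x) + H\big((z(x)-u(x))\odot \nu(x)\big)
\]
for $\hn$-a.e.\ $x \in \partial\Omega$; integrating on $\partial\Omega$ and taking the infimum over $z$ gives the desired upper bound on $\int_{\partial\Omega}\psi(x,u)\dh$.

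For the reverse direction, the first step is to apply Lemma~\ref{lem:meas-select} to produce a measurable $w \in L^2(\partial\Omega;\Rn)$ realizing the pointwise infimum, so that
\[
\int_{\partial\Omega}\psi(x,u)\dh = \int_{\partial\Omega}\Big(\tfrac12 S w \cdot w + H\big((w-u)\odot \nu\big)\Big)\dh,
\]
and then to approximate $w$ by traces of smooth compactly supported fields. Since $\partial\Omega$ is Lipschitz, the restriction map $z \mapsto z|_{\partial\Omega}$ from $C^\infty_c(\Rn;\Rn)$ to $L^2(\partial\Omega;\Rn)$ has dense image, so one can pick $z_k \in C^\infty_c(\Rn;\Rn)$ with $z_k|_{\partial\Omega} \to w$ in $L^2(\partial\Omega;\Rn)$ and, along a subsequence, also $\hn$-a.e.\ on $\partial\Omega$. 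The conclusion will follow by passing to the limit $k\to\infty$ in $\int_{\partial\Omega}\big(\tfrac12 Sz_k\cdot z_k + H((z_k-u)\odot\nu)\big)\dh$.

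The quadratic boundary term $\int_{\partial\Omega}\tfrac12 Sz_k\cdot z_k\dh$ converges to $\int_{\partial\Omega}\tfrac12 Sw\cdot w\dh$ by the $L^2$-convergence of $z_k$ combined with $S \in L^\infty(\partial\Omega;\Mnnp)$. The main obstacle I expect lies in the convergence of $\int_{\partial\Omega}H((z_k-u)\odot\nu)\dh$ to $\int_{\partial\Omega}H((w-u)\odot\nu)\dh$: the convexity and lower semicontinuity of $H$ together with Fatou's lemma immediately yield the $\liminf \geq$ bound, but what is actually needed is the reverse $\limsup \leq$. To establish it, I would exploit the sublinearity of $H$ (which follows from convexity, positive $1$-homogeneity, and $H(0)=0$) in the form
\[
H((z_k-u)\odot\nu) \leq H((w-u)\odot\nu) + H((z_k-w)\odot\nu),
\]
and show that the last summand vanishes in $L^1(\partial\Omega)$. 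When $H$ has linear growth on $\Mnn$ this is immediate from the Cauchy--Schwarz bound $\int_{\partial\Omega}H((z_k-w)\odot\nu)\dh \leq C\,\hn(\partial\Omega)^{1/2}\|z_k-w\|_{L^2} \to 0$. In the general case where $H$ may take infinite values (as in cylindrical $\mathbf K$), the approximating sequence $z_k$ must be chosen with care so as to remain consistent with the domain of finiteness of $H((\,\cdot\,-u)\odot\nu)$---typically via a tangential mollification on $\partial\Omega$ preserving the relevant normal component---which I anticipate to be the most delicate technical point of the proof.
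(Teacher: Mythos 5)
Your approach coincides with the paper's: first reduce via Lemma~\ref{lem:meas-select} to the infimum over $L^2(\dom;\Rn)$, then approximate a generic $L^2$ boundary field by restrictions of $C^\infty_c(\Rn;\Rn)$ fields (the paper uses Tietze extension plus mollification), and conclude by continuity of the integral functional with respect to strong $L^2(\dom;\Rn)$ convergence. Where the paper invokes this continuity in a single sentence, you unpack it: the quadratic term by boundedness of $S$, the $H$-term by sublinearity and Cauchy--Schwarz. Both arguments are correct whenever $H$ is finite-valued, i.e.\ whenever $\mathbf K$ is bounded.

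The concern you raise at the end is, however, not a ``delicate technical point'' that a cleverer mollification would remove, and it is not addressed in the paper's proof either. When $H$ takes the value $+\infty$ (cylindrical $\mathbf K$), the functional $z\mapsto\int_{\dom}H\big((z-u)\odot\nu\big)\dh$ is only lower semicontinuous, not continuous, for the strong $L^2(\dom;\Rn)$ topology: finiteness of the integrand forces $z\cdot\nu = u\cdot\nu$ at $\hn$-a.e.\ point, an exact pointwise constraint, and (already on a $C^1$ boundary) $z\cdot\nu$ is continuous while $u\cdot\nu$ need not be for generic $u\in L^2(\dom;\Rn)$. So the approximating $z_k$ cannot be chosen to keep the $H$-term finite, and the asserted identity is at best unclear for unbounded $\mathbf K$. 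Your suggested remedy---a tangential mollification on $\dom$ ``preserving the relevant normal component''---is precisely what cannot be done, since such a mollification does not produce the trace of a field in $C^\infty_c(\Rn;\Rn)$ and matching the exact normal component is the obstruction. In short, your proof, like the paper's, implicitly assumes $H$ has linear growth; you are right to flag the unbounded case as a gap rather than a routine technicality.
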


\begin{proof}
According to Lemma \ref{lem:meas-select}, we have
$$\int_{\partial \Omega} \psi(x,u)\dh  =  \inf_{z \in L^2(\partial \Omega;\Rn)}\bigg\{ \int_{\partial \Omega}\left(\frac12 Sz\cdot z + H\big((z-u)\odot\nu\big)\right)\dh\bigg\} .$$
By density of $C(\partial \Omega;\Rn)$ in $L^2(\partial \Omega;\Rn)$  and Tietze's extension Theorem, for any $z \in L^2(\partial \Omega;\Rn)$ and $\eta>0$ there exists a function $\tilde z  \in C_c(\Rn;\Rn)$ such that $\|z-\tilde z\|_{L^2(\partial \Omega;\Rn)}\leq\eta/2$. Then approximating by convolution, there exists $\hat z \in C^\infty_c(\Rn;\Rn)$ such that $\|\hat z-z\|_{L^2(\partial \Omega;\Rn)}\leq\eta$. The conclusion then follows from the continuity of the functional
$$z \in L^2(\partial \Omega;\Rn) \mapsto  \int_{\partial \Omega}\left(\frac12 Sz\cdot z + H\big((z-u)\odot\nu\big)\right)\dh$$
with respect to the strong $L^2(\partial \Omega;\Rn)$-convergence.
\end{proof}

In the  following result we establish a convexity inequality which would be easily obtained if we had at our disposal a generalized Green formula, and if we a priori knew that $\sigma\nu \in \mathbf K\nu$ when $\sigma \in \mathcal K(\Omega)$. This result will be essential to obtain the upper energy--dissipation inequality in Section \ref{sec:energy-balance}.

\begin{proposition}\label{prop:IPP}
Let $(u, e, p) \in (BD(\Omega) \cap L^2(\Omega;\Rn)) {\times} \Lnn{\times}\mathcal M_b(\Omega;\Mnn)$ be such that $\HH(p)<+\infty$, and let $\sigma \in \mathcal K(\Omega)$ with $\sigma\nu \in L^2(\dom;\Rn)$. Then for all $\varphi \in C^1_c(\Rn)$ with $\varphi\geq 0$, we have
\begin{multline*}
\int_\Omega \varphi \de H(p)  + \int_{\partial \Omega}\varphi \psi(x,u)\dh+\frac12\int_{\partial \Omega}  \varphi S^{-1}(\sigma\nu)\cdot (\sigma\nu)\dh\\
 \geq-\int_\Omega \varphi \sigma :  e\dx- \int_\Omega \varphi  u \cdot \diver \sigma \dx- \int_\Omega \sigma \colon ( u \odot \nabla \varphi) \dx.
\end{multline*}
\end{proposition}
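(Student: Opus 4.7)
The strategy is to extend the plastic strain $p$ up to $\overline\Omega$ by adding a boundary contribution involving an arbitrary smooth test field $w$, apply the measure--valued convexity inequality \eqref{eq:conv-ineq} on $\overline\Omega$, and then optimize in $w$ to recover the boundary density $\psi(x,u)$ via the global formula of Lemma~\ref{lem:inf-ext}. This reduces the inequality to something that does not require a generalized Green formula for $\sigma\in\mathcal K(\Omega)$ with $\sigma\nu$ only in $L^2(\dom;\Rn)$, which is the source of the difficulty.

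Concretely, I would first fix $w\in C^\infty_c(\R^n;\Rn)$ and define
$$\tilde p := p + (w-u)\odot\nu\,\hn\mres\dom \ \in\ \Mb(\overline\Omega;\Mnn).$$
Then $(u,e,\tilde p)$ fits the framework of the duality $[\sigma:\cdot]$ with Dirichlet datum $w|_\Omega\in H^1(\Omega;\Rn)$: $\E u=e+\tilde p$ in $\Omega$ and $\tilde p=(w-u)\odot\nu\,\hn$ on $\dom$. Consequently, by \eqref{eq:conv-ineq} the distribution $[\sigma:\tilde p]$ extends to a bounded Radon measure on $\overline\Omega$ and $H(\tilde p)\ge [\sigma:\tilde p]$ in $\mathcal M_b^+(\overline\Omega)$. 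Testing against the nonnegative $\varphi\in C^1_c(\R^n)$ and splitting the left--hand side into its $\Omega$--part and $\dom$--part yields
$$\int_\Omega \varphi\,\de H(p)+\int_\dom \varphi H((w-u)\odot\nu)\dh\ \ge\ \langle[\sigma:\tilde p],\varphi\rangle.$$

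The next step is to expand the right--hand side using the definition of $[\sigma:\tilde p]$. The terms not containing $w$ give precisely the right--hand side $\mathrm{RHS}_0$ of the proposition, while the three $w$--dependent terms assemble, thanks to the integration by parts formula \eqref{2911181910} applied to $\varphi w\in H^1(\Omega;\Rn)$ (legitimate because $\sigma\nu\in L^2(\dom;\Rn)$), into the boundary integral $\int_\dom\varphi(\sigma\nu)\cdot w\dh$. Hence for every $w\in C^\infty_c(\R^n;\Rn)$,
$$\int_\Omega\varphi\,\de H(p)+\int_\dom\varphi\bigl[H((w-u)\odot\nu)-(\sigma\nu)\cdot w\bigr]\dh\ \ge\ \mathrm{RHS}_0.$$
A pointwise Young inequality in the $S$--norm, $-(\sigma\nu)\cdot w\ge -\tfrac12 S^{-1}(\sigma\nu)\cdot(\sigma\nu)-\tfrac12 Sw\cdot w$, then gives
$$\int_\Omega \varphi\,\de H(p)+\tfrac12\int_\dom\varphi S^{-1}(\sigma\nu)\cdot(\sigma\nu)\dh+\int_\dom\varphi\bigl[\tfrac12 Sw\cdot w+H((w-u)\odot\nu)\bigr]\dh\ \ge\ \mathrm{RHS}_0.$$

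Since the first two integrals on the left do not depend on $w$, it only remains to minimize the third one over $w\in C^\infty_c(\R^n;\Rn)$ and invoke the pointwise identity defining $\psi$. More precisely, I would establish the weighted analogue of Lemma~\ref{lem:inf-ext},
$$\inf_{w\in C^\infty_c(\R^n;\Rn)}\int_\dom\varphi\Bigl[\tfrac12 Sw\cdot w+H((w-u)\odot\nu)\Bigr]\dh= \int_\dom\varphi\,\psi(x,u)\dh,$$
by exactly the same route: start from the pointwise minimizer $\bar w\in L^2(\dom;\Rn)$ furnished by Lemma~\ref{lem:meas-select}, for which the integrand is pointwise equal to $\varphi\,\psi(x,u)$, then approximate $\bar w$ by smooth compactly supported fields in $L^2(\dom;\Rn)$ and pass to the limit using the boundedness of $S$ and the continuity of the Young/$H$ functional.

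The main technical obstacle is precisely this last step when $\mathbf K$ is unbounded, in which case $H$ may be infinite in some directions and a priori $H((w-u)\odot\nu)$ need not be integrable along the approximating sequence. This is controlled by the crucial bound $H((\bar w-u)\odot\nu)\le \psi(x,\bar u)\le \tfrac12 Su\cdot u\in L^1(\dom)$ coming from the very definition of $\bar w$, which provides a dominating majorant and permits a dominated convergence argument exactly as in the proof of Lemma~\ref{lem:inf-ext}; once this weighted version is in hand, substitution into the previous display delivers the claimed inequality.
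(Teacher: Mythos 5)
Your proposal is correct and reproduces the paper's own argument: extend $p$ to $\overline\Omega$ by $(z-u)\odot\nu\,\hn\mres\dom$ with $z$ smooth, apply the measure-theoretic convexity inequality \eqref{eq:conv-ineq} and the definition of the duality, assemble the $z$-terms into $\int_\dom\varphi(\sigma\nu)\cdot z\dh$ via integration by parts (legitimate because $\sigma\nu\in L^2(\dom;\Rn)$), use Young's inequality $(\sigma\nu)\cdot z + \tfrac12 Sz\cdot z\ge -\tfrac12 S^{-1}(\sigma\nu)\cdot(\sigma\nu)$, and minimize over $z$ via Lemma~\ref{lem:inf-ext}. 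You correctly observe that one actually needs a $\varphi$-weighted variant of Lemma~\ref{lem:inf-ext}, which the paper invokes without the weight; since $\varphi$ is continuous, bounded and nonnegative, the proof of that lemma carries over verbatim.
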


\begin{proof} Let $z \in C^\infty_c(\Rn;\Rn)$,  and let us  define an extension $q \in \mathcal M_b(\overline \Omega;\Mnn)$ of $p$ by setting
$$q:=p \mres \Omega + (z-u)\odot \nu\hn \mres \dom.$$
Then from \eqref{eq:conv-ineq} and the definition of the stress--strain duality, for all $\varphi \in C^1(\Rn)$ with $\varphi \geq 0$, we have that
\begin{multline*}
\int_\Omega \varphi \de H(p)  + \int_{\partial \Omega}\varphi \left(\frac12  S z\cdot z+ H((z- u)\odot \nu)\right)\dh\\
=\int_{\overline \Omega} \varphi \de H(q)  +\frac12 \int_{\partial \Omega}\varphi   S z\cdot z \dh
\geq \langle [\sigma \colon q],\varphi\rangle +\frac12\int_{\partial \Omega} \varphi  S z\cdot z\dh\\
= -\int_\Omega \varphi \sigma \colon (e -Ez)\dx- \int_\Omega (u-z) \cdot \diver \sigma \, \varphi \dx
 - \int_\Omega \sigma \colon \big((u-z) \odot \nabla \varphi\big) \dx+ \frac12 \int_{\partial \Omega}  \varphi S z\cdot z\dh.
\end{multline*}
Integrating by parts the terms involving $z$ yields
\begin{multline*}
\int_\Omega \varphi \de H(p)  + \int_{\partial \Omega}\varphi \left(\frac12  S z\cdot z+ H((z-u)\odot \nu)\right)\dh\\
\geq-\int_\Omega \varphi \sigma :  e\dx- \int_\Omega \varphi u \cdot \diver \sigma \dx
 - \int_\Omega \sigma \colon (u \odot \nabla \varphi) \dx+\int_{\partial \Omega}  \varphi  \left((\sigma\nu)\cdot z + \frac12 S z\cdot z\right)\dh\\
 \geq-\int_\Omega \varphi \sigma : e\dx- \int_\Omega \varphi u \cdot \diver \sigma \dx - \int_\Omega \sigma \colon (u \odot \nabla \varphi) \dx -\frac12\int_{\partial \Omega}  \varphi S^{-1}(\sigma\nu)\cdot (\sigma\nu)\dh, 
\end{multline*}
where we used that $(\sigma\nu)\cdot z + \frac12 S z\cdot z \geq -\frac12 S^{-1}(\sigma\nu)\cdot (\sigma\nu)$ $\hn$-a.e. on $\partial \Omega$. Passing to the infimum in $z \in C^\infty_c(\Rn;\Rn)$ in the left hand side of the previous inequality, and using Lemma \ref{lem:inf-ext} leads to the desired inequality.
\end{proof}

Our last tool is the following lower bound for the dissipation energy along sequences converging in a natural topology associated to the energy space. This lower bound estimate will be the main tool employed in the lower energy--dissipation inequality in Section \ref{sec:energy-balance}.

\begin{proposition}
Let $(u_k, e_k, p_k)_{k \in \mathbb N}$ be a sequence in $BD(\Omega){\times} \Lnn{\times}\mathcal M_b(\Omega;\Mnn)$  such that $\E u_k = e_k + p_k$ in $\mathcal M_b(\Omega;\Mnn)$, and 
$$\begin{dcases}
u_k \wstar u &\quad\text{weakly* in }BD(\Omega)\,,\\
e_k \weak e &\quad\text{weakly in }\Lnn\,,\\
p_k \wstar p &\quad \text{weakly* in }\mathcal M_b(\Omega;\Mnn)\,,\\
u_k \weak w & \quad \text{weakly in } \Lbn\,,
\end{dcases}
$$
for some $(u, e, p) \in BD(\Omega){\times} \Lnn{\times}\mathcal M_b(\Omega;\Mnn)$ and $w \in L^2(\dom;\Rn)$. Then
\begin{equation}\label{2211182119}
p_k  \wstar p+(w-u) \odot \nu \hn \mres \dom \quad \text{ weakly* in }\mathcal M_b(\overline \Omega;\Mnn)
\end{equation}
and
\begin{equation}\label{2211182148}
\mathcal H(p) + \int_\dom  \psi(x,u) \dh
 \leq \liminf_{k\to \infty} \left\{ \mathcal H(p_k)+ \frac12 \int_{\partial \Omega}  Su_k \cdot u_k \dh \right\}.
\end{equation}
\end{proposition}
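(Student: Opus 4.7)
The plan is to first establish the extension formula \eqref{2211182119} via an integration by parts argument, and then derive the lower bound \eqref{2211182148} by combining Reshetnyak lower semicontinuity of the plastic dissipation on $\overline\Omega$ with convexity of the boundary quadratic form and the pointwise characterization of $\psi$ as an infimum.

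\textbf{Step 1 (extension of $p_k$ to $\overline\Omega$).} For an arbitrary test $\varphi \in C^1(\overline\Omega;\Mnn)$, applying the $BD$ integration by parts formula to $u_k$ and using $\E u_k = e_k + p_k$ yields
\begin{equation*}
\int_\Omega \varphi : d p_k = -\int_\Omega \diver \varphi \cdot u_k \dx + \int_\dom (\varphi\nu) \cdot u_k \dh - \int_\Omega \varphi : e_k \dx.
\end{equation*}
Since $u_k \wstar u$ in $BD(\Omega)$ gives $u_k \to u$ strongly in $L^1(\Omega;\Rn)$ (by the compact embedding), while $u_k \weak w$ in $L^2(\dom;\Rn)$ and $e_k \weak e$ in $\Lnn$, each term passes to the limit. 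Applying the integration by parts formula once more, this time to $u$, and using $(\varphi\nu)\cdot a = \varphi:(a\odot\nu)$ for symmetric $\varphi$, one identifies
\begin{equation*}
\lim_k \int_\Omega \varphi : d p_k = \int_\Omega \varphi : dp + \int_\dom \varphi : \big((w-u)\odot\nu\big) \dh.
\end{equation*}
Uniform boundedness of $(p_k)_k$ in $\mathcal M_b(\overline\Omega;\Mnn)$, a consequence of the bounds on $\E u_k$ and $e_k$, together with density of $C^1(\overline\Omega;\Mnn)$ in $C(\overline\Omega;\Mnn)$, upgrades this to the claimed weak* convergence \eqref{2211182119}.

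\textbf{Step 2 (lower semicontinuity of each piece).} Reshetnyak's theorem applied to the convex, positively one-homogeneous support function $H$ gives that $\mu \mapsto \int_{\overline\Omega} H(\mu)$ is lower semicontinuous under weak* convergence of bounded $\Mnn$-valued measures. Applied to \eqref{2211182119}, and using one-homogeneity of $H$ together with the mutual singularity of the interior and boundary parts of the limit measure to split $H\big(p + (w-u)\odot\nu\,\hn\mres\dom\big)$ into its two contributions, this produces
\begin{equation*}
\mathcal H(p) + \int_\dom H\big((w-u)\odot\nu\big) \dh \leq \liminf_k \mathcal H(p_k).
\end{equation*}
Separately, $v \mapsto \frac12 \int_\dom S v \cdot v \dh$ is convex and strongly continuous on $L^2(\dom;\Rn)$ by $(H_4)$, hence weakly lower semicontinuous, so
\begin{equation*}
\tfrac12 \int_\dom S w \cdot w \dh \leq \liminf_k \tfrac12 \int_\dom S u_k \cdot u_k \dh.
\end{equation*}
Superadditivity of $\liminf$ combines these two bounds into one.

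\textbf{Step 3 (dominating $\psi$ through the $w$-competitor).} By the very definition \eqref{defPsi} of $\psi(x,\cdot)$ as an infimum, the admissible choice $z' = w(x)$ gives
\begin{equation*}
\psi(x, u(x)) \leq \tfrac12 S(x) w(x) \cdot w(x) + H\big((w(x)-u(x))\odot\nu(x)\big) \quad \text{for $\hn$-a.e. }x \in \dom.
\end{equation*}
Integrating over $\dom$ and inserting into the combined estimate from Step 2 yields precisely \eqref{2211182148}.

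The main obstacle is the careful handling of the limit measure in Step 2: one must justify that the one-homogeneous convex functional of measures, evaluated on the singular boundary contribution $(w-u)\odot\nu\,\hn\mres\dom$, reduces exactly to $\int_\dom H((w-u)\odot\nu)\dh$. This is precisely where the one-homogeneity of $H$ and the Goffman–Serrin framework recalled in Section~2.3 are essential, and where the loss of continuity of the $BD$ trace under weak* convergence (producing $w\neq u|_{\dom}$ in general) is absorbed into an additional, nonnegative boundary dissipation term.
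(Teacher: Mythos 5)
Your argument is correct and follows essentially the same path as the paper's proof: Green's formula in $BD(\Omega)$ to identify the weak* limit of $p_k$ in $\mathcal M_b(\overline\Omega;\Mnn)$, Reshetnyak lower semicontinuity for $\mathcal H$ together with weak lower semicontinuity of the boundary quadratic form, and the competitor $z'=w(x)$ in the infimal-convolution definition of $\psi$. The only cosmetic difference is that you apply the integration-by-parts formula to $u_k$ and $u$ separately and subtract, whereas the paper applies it once to the difference $u_k-u$.
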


\begin{proof}
We first establish \eqref{2211182119}. 
By Green's formula in $BD(\Omega)$ (see \cite[Theorem 3.2]{Bab}), we have that for any $\varphi \in C^1(\R^n;\Mnn)$, 
$$\int_{\Omega} \varphi \colon \mathrm{d}(\E  u_k - \E  u) = - \int_\Omega\diver \varphi \cdot (u_k-u) \dx + \int_\dom \varphi \colon (u_k-u) \odot \nu \dh\,,$$
so that, using the convergences in the hypotheses, 
\begin{equation}\label{2111182226}
\lim_{k\to \infty} \int_{\Omega} \varphi \colon \mathrm{d}(\E  u_k - \E  u) = \int_{\dom}\varphi \colon (w-u) \odot \nu \dh\,.
\end{equation}
By a  density argument, the same convergence holds also for every $\varphi \in C(\overline \Omega;\Mnn)$. Since $e_k \weak e$ in $\Lnn$, then using that $p_k=\E u_k -e_k$ and $p=\E u -e$, and by \eqref{2111182226}, we get that 
$$\lim_{k\to \infty} \int_{\Omega} \varphi \colon \mathrm{d}(p_k - p) = \int_{\dom}\varphi \colon (w-u) \odot \nu \dh\quad\text{for every } \varphi\in C(\overline \Omega;\Mnn)\,,$$
which gives \eqref{2211182119}.

We now prove \eqref{2211182148}. By \eqref{2211182119} and Reshetnyak's lower semicontinuity Theorem we obtain that 
\begin{equation*}
\begin{split}
\liminf_{k\to \infty} &\left\{ \mathcal H(p_k) + \frac12 \int_{\partial \Omega}  Su_k \cdot u_k \dh \right\}\\
&\geq \mathcal H(p)+\int_{\partial \Omega}  H\big((w-u)\odot \nu\big)\dh + \frac12 \int_{\partial\Omega} Sw\cdot w\dh 
\geq\mathcal H(p) +\int_{\partial \Omega} \psi(x,u)\dh\,,
\end{split}
\end{equation*}
 by definition of $\psi$.
\end{proof}

\begin{remark}\label{rem:wnu=unu}
In the case of a cylindrical convex set $\mathbf K$ of the form \eqref{eq:cylincric}, since the plastic strain is deviatoric then
the condition
$$p_k \wstar p \quad \text{weakly* in }\mathcal M_b(\Omega;\mathbb M^n_D)$$
implies that $\tr \big((w-u)\odot \nu \big)=(w-u)\cdot\nu=0$ $\hn$-a.e. on $\partial \Omega$.
\end{remark}

\subsection{Proof of Theorem \ref{teo:existence_variational}}\label{subsec:4.4}

In this subsection we prove Theorem \ref{teo:existence_variational}. Using the energy balance as well as the estimate \eqref{estimate_viscosa}, we deduce weak convergence properties of the various fields.  This  enables one to pass to the limit in the linear relations (the initial condition, the additive decomposition, the constitutive law and the equation of motion) as well as in the convex relation (the stress constraint). Then we improve these weak convergences into strong ones (in particular for the trace of the displacement) thanks to which we can pass to the lower limit in the localized energy balance.  This  gives a first energy inequality which turns out to be an equality.  Specializing   this   energy balance to test functions compactly supported in $\Omega$ we obtain the measure theoretic flow rule. Finally we derive the relaxed dissipative boundary condition thanks to a convexity argument by proving that the limit of the  traces  of $(u_\varepsilon)_{\varepsilon>0}$ is the solution of the minimization problem in the infimal convolution defining $\psi(x,\dot u(t,x))$. Eventually we prove the uniqueness of the solution.
 
\subsubsection{Weak compactness}
For any $\varepsilon>0$, let $(u_\varepsilon, e_\varepsilon, p_\varepsilon)$ be the solution of the elasto--visco--plastic approximation given by Proposition~\ref{teo:apprvisc}.
Thanks to estimates \eqref{estimate_viscosa} in Proposition~\ref{teo:apprvisc}, there exist functions $u\in W^{2, \infty}(0,T; L^2(\Omega;\Rn))$, $e\in W^{1, \infty}(0,T; \Lnn)$, $\sigma \in W^{1, \infty}(0,T; \Lnn)$ and $w \in H^2(0,T; L^2(\dom;\Rn))$ such that, up to a subsequence,
\begin{equation}\label{2811182355}
\begin{dcases}
u_\varepsilon \wstar u &\quad \text{weakly* in }W^{2, \infty}(0,T; L^2(\Omega;\Rn))\,,\\
e_\varepsilon \wstar e &\quad \text{weakly* in } W^{1, \infty}(0,T; \Lnn)\,,\\
\sigma_\varepsilon \wstar \sigma &\quad \text{weakly* in } W^{1, \infty}(0,T; \Lnn)\,,\\
u_\varepsilon \weak w &\quad \text{weakly in }H^2(0,T; L^2(\dom;\Rn))\,,
\end{dcases}
\end{equation}
and, by Ascoli-Arzel\`a Theorem, for every $t\in [0,T]$,
\begin{equation}\label{2811182356}
\begin{dcases}
u_\varepsilon(t) \weak u(t), \quad  \dot{u}_\varepsilon(t) \weak \dot{u}(t) &\quad\text{weakly in }L^2(\Omega;\Rn)\,,\\ 
e_\varepsilon(t) \weak e(t), \quad \hspace{0.3em} \sigma_\varepsilon(t) \weak \sigma(t) &\quad\text{weakly in }\Lnn\,,\\
u_\varepsilon(t) \weak w(t), \quad \dot u_\varepsilon(t) \weak \dot w(t) &\quad \text{weakly in }L^2(\dom;\Rn)\,.
\end{dcases}
\end{equation}
By the energy balance \eqref{en_balan_visc} between two arbitrary times $0\leq t_1 <t_2 \leq T$ we get that
\begin{multline*}
\int_{t_1}^{t_2}  \HH(\dot{p}_\varepsilon(s)) \ds \leq  \frac{1}{2} \left(\|\dot{u}_\varepsilon(t_1)\|_{L^2(\Omega;\Rn)}^2 - \|\dot{u}_\varepsilon(t_2)\|_{L^2(\Omega;\Rn)}^2 \right) + \big( \Q(e_\varepsilon(t_1)) - \Q(e_\varepsilon(t_2)) \big)  \\
 +\int_{t_1}^{t_2} \int_\Omega f \cdot \dot{u}_\varepsilon \dx \ds + \int_{t_1}^{t_2} \int_{\dom} g_\varepsilon \cdot \dot{u}_\varepsilon \dh \ds\,.
\end{multline*}

According to the bounds \eqref{estimate_viscosa}, the right hand side above can be controled by $C(t_2-t_1)$, for some constant $C>0$ independent of $\varepsilon$ and $t_1$, $t_2$. On the other hand, by Jensen's inequality and \eqref{eq:H}, we get that
$$\|p_\varepsilon(t_2)- p_\varepsilon(t_1)\|_{L^1(\Omega;\Mnn)} \leq C(t_2-t_1)\,,$$
so, by Ascoli-Arzel\`a Theorem, there is $p \in C^{0,1}([0,T];\mathcal M_b(\Omega; \Mnn))$ such that, up to a further subsequence (independent of $t$), for every $t \in [0,T]$
\begin{equation}\label{2911180016}
p_\varepsilon(t) \wstar p(t) \quad\text{weakly* in }\mathcal M_b(\Omega;\Mnn)\,.
\end{equation}
By the additive decomposition $\E u_\varepsilon(t) = e_\varepsilon(t) + p_\varepsilon(t)$, and the convergences in \eqref{2811182356} and \eqref{2911180016}, we have that $u\in C^{0,1}([0,T]; BD(\Omega))$, and for every $t\in [0,T]$
$$u_\varepsilon(t) \wstar u(t) \quad\text{weakly* in }BD(\Omega)\,.$$

\medskip

According to the previously established weak convergences, we can pass to the limit in the initial conditions
$$u(0)=u_0, \quad \dot{u}(0)=v_0, \quad e(0)=e_0, \quad p(0)=p_0,$$
the additive decomposition, 
$$\E u(t) = e(t) + p(t) \quad \text{in } \Mbn\,,$$
the constitutive law
$$\sigma =\C e \quad \text{a.e. in } \Omega \times (0,T)\,,$$
and the equation of motion 
$$\ddot{u}- \diver\sigma =f \quad \text{in }L^2(0,T; L^2(\Omega;\Rn))\,,$$
where we used that $\varepsilon \E \dot{u}_\varepsilon \to 0$ strongly in $L^2(0,T;L^2(\Omega;\Mnn))$ in the last relation.
Note that $\sigma_\varepsilon+\varepsilon \E \dot{u}_\varepsilon \weak \sigma$ weakly in $L^2(0,T;L^2(\Omega;\Mnn))$ and $\diver(\sigma_\varepsilon+\varepsilon \E \dot{u}_\varepsilon)=\ddot u_\varepsilon +f\weak \ddot u+f=\diver \sigma$ weakly in $L^2(0,T;L^2(\Omega;\Rn))$ so that  $\sigma_\varepsilon+\varepsilon \E \dot{u}_\varepsilon \weak \sigma$ weakly in $L^2(0,T;\Hdiv)$. Thus $(\sigma_\varepsilon+\varepsilon \E \dot{u}_\varepsilon)\nu \weak \sigma\nu$ weakly in $L^2(0,T;H^{-\frac12}(\partial \Omega;\Rn))$, and since $\big((\sigma_\varepsilon+\varepsilon \E \dot{u}_\varepsilon)\nu\big)_{\varepsilon>0}$ is bounded in $L^2(0,T;L^2(\partial\Omega;\Rn))$ by the energy balance, we have that $(\sigma_\varepsilon+\varepsilon \E \dot{u}_\varepsilon)\nu \weak \sigma\nu$ weakly in $L^2(0,T;L^2(\partial \Omega;\Rn))$. In particular, passing to the limit in the boundary condition, we obtain that
\begin{equation}\label{eq:bc-partial}
\sigma\nu +S\dot w =0 \quad \text{ in }L^2(0,T;L^2(\partial\Omega;\Rn))
\end{equation}
which will be made more precise later.

\medskip

It is also possible to pass  to  the limit in the stress constraint by setting $\tilde \sigma_\varepsilon:={\rm P}_{\mathbf K}(\sigma_\varepsilon)$, where ${\rm P}_{\mathbf K}$ is the orthogonal projection in $\Mnn$ onto the closed and convex set $\mathbf K$ with respect to the canonical scalar product. Since ${\rm P}_{\mathbf K}$ is $1$-Lipschitz and $0 \in \mathbf K$, we infer that $(\tilde \sigma_\varepsilon)_{\varepsilon>0}$ is bounded in $L^2(0,T;L^2(\Omega;\Mnn))$,  like   $(\sigma_\varepsilon)_{\varepsilon>0}$. Therefore, up to a subsequence, $\tilde \sigma_\varepsilon \weak \tilde \sigma$ weakly in $L^2(0,T;L^2(\Omega;\Mnn))$, for some $\tilde \sigma \in L^2(0,T;L^2(\Omega;\Mnn))$ with $\tilde \sigma \in \mathbf K$ a.e. in $\Omega \times (0,T)$. By the energy balance \eqref{en_balan_visc} together with the visco--plastic flow rule, we have that $\sigma_\varepsilon -\tilde \sigma_\varepsilon=\varepsilon \dot{p}_\varepsilon \to 0$ strongly in $L^2(0,T;L^2(\Omega;\Mnn))$. As a consequence $\sigma=\tilde \sigma$ a.e. in $\Omega \times (0,T)$ which shows the validy of the stress constraint 
$$\sigma\in \mathbf K\quad\text{ a.e. in }\Omega\times (0,T).$$

\subsubsection{Strong compactness}

We next improve the weak convergences into strong convergences. Indeed, substracting the equations of motion, we have
$$\ddot u_\varepsilon - \ddot u - \diver(\sigma_\varepsilon-\sigma +\varepsilon \E\dot u_\varepsilon)=0 \quad \text{ in }L^2(0,T;L^2(\Omega;\Rn))\,.$$
Taking $\dot u_\varepsilon {\bf 1}_{[0,t]} \in L^2(0,T;H^1(\Omega;\Rn))$ as test function, we get that
\begin{multline*}
\int_0^t \int_\Omega (\ddot u_\varepsilon - \ddot u)\cdot \dot u_\varepsilon\dx\ds + \int_0^t \int_\Omega (\sigma_\varepsilon-\sigma):\E \dot u_\varepsilon\dx\ds+\varepsilon\int_0^t \int_\Omega |\E \dot u_\varepsilon|^2\dx\ds\\
=\int_0^t \int_{\partial \Omega} ((\sigma_\varepsilon+\varepsilon \E\dot u_\varepsilon)\nu-\sigma\nu)\cdot \dot u_\varepsilon\dh\ds\\
=-\int_0^t \int_{\partial \Omega} S(\dot u_\varepsilon-\dot w)\cdot \dot u_\varepsilon\dh\ds+\int_0^t \int_{\partial \Omega}g_\varepsilon \cdot \dot u_\varepsilon\dh\ds\,,
\end{multline*}
where we used that $S\dot u_\varepsilon+(\sigma_\varepsilon+\varepsilon \E\dot u_\varepsilon)\nu=0$ and $S\dot w+\sigma\nu=0$ in $L^2(0,T;L^2(\partial\Omega;\Rn))$. According to the additive decomposition, we have that
$$  \int_0^t \int_\Omega (\sigma_\varepsilon-\sigma):\E \dot u_\varepsilon\dx\ds= \int_0^t \int_\Omega (\sigma_\varepsilon-\sigma):\dot e_\varepsilon\dx\ds + \int_0^t \int_\Omega (\sigma_\varepsilon-\sigma):\dot p_\varepsilon\dx\ds.$$
Since $\sigma \in \mathbf K$ a.e. in $\Omega \times (0,T)$, the visco--plastic flow rule implies that
$$\int_0^t \int_\Omega (\sigma_\varepsilon-\sigma):\dot p_\varepsilon\dx\ds \geq 0\,.$$
Thus
\begin{multline*}
\int_0^t \int_\Omega (\ddot u_\varepsilon - \ddot u)\cdot (\dot u_\varepsilon-\dot u)\dx\ds + \int_0^t \int_\Omega (\sigma_\varepsilon-\sigma):(\dot e_\varepsilon-\dot e)\dx\ds
+\varepsilon\int_0^t \int_\Omega |\E \dot u_\varepsilon|^2\dx\ds\\+\int_0^t \int_{\partial \Omega} S(\dot u_\varepsilon-\dot w)\cdot (\dot u_\varepsilon-\dot w)\dh\ds
\leq-\int_0^t \int_\Omega (\ddot u_\varepsilon - \ddot u)\cdot \dot u\dx\ds \\- \int_0^t \int_\Omega (\sigma_\varepsilon-\sigma):\dot e\dx\ds 
- \int_0^t \int_{\partial \Omega} S(\dot u_\varepsilon-\dot w)\cdot \dot w\dh\ds+\int_0^t \int_{\partial \Omega}g_\varepsilon \cdot \dot u_\varepsilon\dh\ds.
\end{multline*}
The weak convergences \eqref{2811182355} imply that the right hand side of the previous inequality tends to zero as $\varepsilon \to 0$. Thus, since 
$$\int_0^t \int_\Omega (\ddot u_\varepsilon - \ddot u)\cdot (\dot u_\varepsilon-\dot u)\dx\ds =\frac12 \|\dot u_\varepsilon(t)-\dot u(t)\|^2_{L^2(\Omega;\Rn)},$$
$$\int_0^t \int_\Omega (\sigma_\varepsilon-\sigma):(\dot e_\varepsilon-\dot e)\dx\ds
=\Q(e_\varepsilon(t)-e(t)),$$
and, by \eqref{2711182136},
$$\int_0^t \int_{\partial \Omega} S(\dot u_\varepsilon-\dot w)\cdot (\dot u_\varepsilon-\dot w)\dh\ds\geq c\int_0^t \int_{\partial \Omega}|\dot u_\varepsilon-\dot w|^2\dh\ds$$
we get the following strong convergences, for all $t \in [0,T]$,
$$\begin{cases}
\dot u_\varepsilon(t) \to \dot u(t) & \text{ strongly in }L^2(\Omega;\R^n),\\
\sigma_\varepsilon(t) \to \sigma(t) & \text{ strongly in }L^2(\Omega;\mathbb M^n_{\rm sym}),\\
\dot u_\varepsilon \to \dot w & \text{ strongly in }L^2(0,T ;L^2(\partial \Omega;\R^n)).
\end{cases}$$
In addition, the estimates \eqref{estimate_viscosa} and Lebesgue's dominated convergence theorem yield
$$\begin{cases}
\dot u_\varepsilon \to \dot u & \text{ strongly in }L^2(0,T;L^2(\Omega;\R^n)),\\
\sigma_\varepsilon \to \sigma & \text{ strongly in }L^2(0,T;L^2(\Omega;\mathbb M^n_{\rm sym})),\\
\end{cases}$$

According to  the  boundary condition at $\varepsilon$  fixed,  we deduce that the sequence $\big((\sigma_\varepsilon+\varepsilon \E \dot{u}_\varepsilon)\nu\big)_{\varepsilon>0}$ is strongly converging in $L^2(0,T;L^2(\partial\Omega;\Rn))$, and thus,
$$(\sigma_\varepsilon+\varepsilon \E u_\varepsilon)\nu \to \sigma\nu  \quad \text{ strongly in }L^2(0,T;L^2(\partial \Omega;\R^n)).$$

\begin{remark}
In the case of a cylindrical convex set $\mathbf K$ of the form \eqref{eq:cylincric}, since the plastic strain is deviatoric then
the convergence \eqref{2911180016} has to be replaced by
$$p_\varepsilon(t) \wstar p(t) \quad\text{weakly* in }\mathcal M_b(\Omega;\mathbb M^n_D)$$
for all $t \in [0,T]$. Thus, Remark \ref{rem:wnu=unu} ensures that $u(t)\cdot\nu=w(t)\cdot \nu$ on $\partial \Omega$, hence $\dot w(t)\cdot\nu=\dot u(t)\cdot \nu$ on $\partial \Omega$ for a.e. $t \in (0,T)$. Taking the normal component of the boundary condition \eqref{eq:bc-partial} yields
$$\dot u \cdot \nu+(S^{-1}\sigma\nu)\cdot\nu=0\quad \text{in }L^2(0,T; L^2(\dom; \Rn)).$$
\end{remark}

\subsubsection{Energy balance}\label{sec:energy-balance}

We first localize the energy balance \eqref{en_balan_visc} for fixed $\varepsilon$. To this end, we consider a test function $\varphi \in C^\infty_c(\Rn)$ and we multiply the equation of motion by $\varphi \dot u_\varepsilon$. After integration by parts in space and time, we obtain for all $t \in [0,T]$,
 \begin{multline}\label{eq:energy-balance}
\frac12 \int_\Omega \varphi |\dot{u}_\varepsilon(t)|^2\dx + \int_\Omega \varphi\, Q(e_\varepsilon(t))\dx + \int_0^t \int_\Omega \varphi \,H(\dot{p}_\varepsilon) \dx\ds +\int_0^t \int_\dom \varphi \,S \dot{u}_\varepsilon \cdot \dot{u}_\varepsilon \dh \ds\\
+\varepsilon \int_0^t \int_\Omega \varphi |\E \dot{u}_\varepsilon|^2\dx\ds + \varepsilon\int_0^t \int_\Omega \varphi |\dot{p}_\varepsilon|^2 \dx\ds +\int_0^t\int_\Omega \sigma_\varepsilon:(\nabla \varphi \odot \dot u_\varepsilon)\dx\ds\\
 =\frac12 \int_\Omega \varphi |v_0|^2\dx +\int_\Omega \varphi \, Q(e_0)\dx + \int_0^t \int_\Omega \varphi f \cdot \dot{u}_\varepsilon \dx \ds + \int_0^t \int_{\dom} \varphi g_\varepsilon \cdot \dot{u}_\varepsilon \dh \ds\,.
\end{multline}
If further $\varphi \geq 0$, we write the energy balance \eqref{eq:energy-balance} as
\begin{multline*}
\frac12 \int_\Omega \varphi |\dot{u}_\varepsilon(t)|^2\dx + \int_\Omega \varphi \, Q(e_\varepsilon(t))\dx + \int_0^{t} \int_\Omega \varphi H(\dot{p}_\varepsilon) \dx\ds +\int_0^{t} \int_\dom \varphi\, S \dot{u}_\varepsilon \cdot \dot{u}_\varepsilon \dh \ds\\
+\int_0^{t}\int_\Omega \sigma_\varepsilon:(\nabla \varphi \odot \dot u_\varepsilon)\dx\ds\\
 \leq \frac12 \int_\Omega \varphi |v_0|^2\dx +\int_\Omega \varphi \, Q(e_0)\dx + \int_0^{t} \int_\Omega \varphi f \cdot \dot{u}_\varepsilon \dx \ds + \int_0^{t} \int_{\dom} \varphi g_\varepsilon \cdot \dot{u}_\varepsilon \dh \ds\,.
\end{multline*}

According to the strong convergences of the velocity and the stress, we have
$$\frac12 \int_\Omega \varphi |\dot{u}_\varepsilon(t)|^2\dx + \int_\Omega \varphi \, Q(e_\varepsilon(t))\dx\to \frac12 \int_\Omega \varphi |\dot{u}(t)|^2\dx + \int_\Omega \varphi \, Q(e(t))\dx,$$
$$\int_0^{t}\int_\Omega \sigma_\varepsilon:(\nabla \varphi \odot \dot u_\varepsilon)\dx\ds \to \int_0^{t}\int_\Omega \sigma:(\nabla \varphi \odot \dot u)\dx\ds$$
and
$$\int_0^{t} \int_\Omega \varphi f \cdot \dot{u}_\varepsilon \dx \ds + \int_0^{t} \int_{\dom} \varphi g_\varepsilon \cdot \dot{u}_\varepsilon \dh \ds\to
\int_0^{t} \int_\Omega \varphi f \cdot \dot{u} \dx \ds.$$

Using the boundary condition and the strong convergences of the boundary velocity and the normal stress, we get
\begin{multline*}
\int_0^{t} \int_\dom \varphi\,S \dot{u}_\varepsilon \cdot \dot{u}_\varepsilon \dh \ds=\frac12 \int_0^{t} \int_\dom \varphi \,S \dot{u}_\varepsilon \cdot \dot{u}_\varepsilon \dh \ds\\
+\frac12  \int_0^{t} \int_\dom \varphi \,S^{-1}\big((\sigma_\varepsilon+\varepsilon \E \dot u_\varepsilon)\nu-g_\varepsilon\big)  \cdot\big((\sigma_\varepsilon+\varepsilon \E \dot u_\varepsilon)\nu-g_\varepsilon\big) \dh \ds\\
 \to \frac12  \int_0^{t} \int_\dom \varphi \, S\dot w\cdot \dot w\dh\ds+\frac12  \int_0^{t} \int_\dom \varphi \, S^{-1}(\sigma\nu)\cdot(\sigma\nu)\dh\ds.
\end{multline*}

Let $0=t_0 \leq t_1 \leq \cdots \leq t_N=t$ be a partition of $[0,t]$. Since, for all $0 \leq i \leq N$, we have $p_\varepsilon(t_i) \wstar p(t_i)\mres \Omega + (w(t_i)-u(t_i))\odot \nu\hn \mres \partial \Omega$ weakly* in $\mathcal M_b(\overline \Omega;\Mnn)$, using Jensen's inequality and Reshetnyak's lower semicontinuity Theorem, we get that
\begin{multline*}
\liminf_{\varepsilon \to 0}\int_0^t \int_\Omega \varphi H(\dot p_\varepsilon)\dx\ds \geq \sum_{i=1}^N \liminf_{\varepsilon \to 0}\int_\Omega \varphi H(p_\varepsilon(t_i)-p_\varepsilon(t_{i-1}))\dx\\
 \geq \sum_{i=1}^N \left\{ \int_\Omega\varphi  \de H(p(t_i)-p(t_{i-1}))  \right.\\
 \left.+ \int_\dom \varphi H\big( ((w(t_i)-u(t_i))-(w(t_{i-1})-u(t_{i-1}))\odot \nu \big)\dh\right\}.
\end{multline*}
Passing to the supremum with respect to all partitions and using \cite[Theorem 7.1]{DMDSMor06}, we get that
\begin{multline*}
\liminf_{\varepsilon \to 0}\int_0^t \int_\Omega \varphi H(\dot p_\varepsilon)\dx\ds 
\geq  \int_0^t  \int_\Omega\varphi  \de H(\dot p(s)) \ds + \int_0^t \int_\dom \varphi H\big( (\dot w(s)-\dot u(s))\odot \nu \big)\dh\ds\,.
\end{multline*}
Gathering all previous convergences and using the definition of $\psi$, we deduce that 
\begin{multline}\label{eq:ineq1}
\frac12 \int_\Omega \varphi|\dot u(t)|^2\dx + \int_\Omega \varphi \, Q(e(t)) \dx +  \int_0^{t} \int_\Omega \varphi \de  H(\dot p(s))  \ds \\
+\int_0^{t} \int_\Omega \sigma:(\nabla \varphi \odot \dot u)\dx\ds+   \int_0^{t} \int_\dom \varphi \psi(x, \dot{u})  \dh \ds+ \frac{1}{2} \int_0^{t} \int_\dom \varphi S^{-1}(\sigma \nu) \cdot (\sigma \nu) \dh \ds   \\
\leq 
\frac12 \int_\Omega \varphi|\dot u(t)|^2\dx + \int_\Omega \varphi \, Q(e(t)) \dx +  \int_0^{t} \int_\Omega \varphi \de H(\dot p(s) ) \ds\\
 +\int_0^{t} \int_\Omega \sigma:(\nabla \varphi \odot \dot u)\dx\ds+ \frac{1}{2} \int_0^{t} \int_\dom \varphi S^{-1}(\sigma \nu) \cdot (\sigma \nu) \dh \ds   \\
+   \int_0^t \int_\dom \varphi H\big( (\dot w-\dot u)\odot \nu\big)\dh\ds+\frac12  \int_0^{t} \int_\dom \varphi \, S\dot w\cdot \dot w\dh\ds\\
\leq \frac12 \int_\Omega \varphi |v_0|^2\dx + \int_\Omega \varphi \, Q(e_0) \dx + \int_0^t \int_\Omega \varphi f \cdot \dot{u}  \dx \ds \,.
\end{multline}

\medskip

We now prove the opposite inequality. According to Proposition \ref{prop:IPP} and using the equation of motion, we have that
\begin{multline*}
\int_\Omega \varphi \de H(\dot p(t))  + \int_{\partial \Omega}\varphi \psi(x,\dot u(t))\dh+\frac12\int_{\partial \Omega}  \varphi S^{-1}(\sigma(t)\nu)\cdot (\sigma(t)\nu)\dh\\
 \geq-\int_\Omega \varphi \sigma(t) : \dot e(t)\dx- \int_\Omega \varphi \dot u(t) \cdot \ddot u(t) \dx
 +\int_\Omega \varphi \dot u(t) \cdot f(t) \dx - \int_\Omega \sigma(t) \colon (\dot u(t) \odot \nabla \varphi) \dx.
\end{multline*}
Integrating in time between $0$ and $t$ and integrating by parts, leads to the remaining energy inequality
\begin{multline}\label{eq:ineq2}
\frac12 \int_\Omega \varphi|\dot u(t)|^2\dx + \int_\Omega \varphi \,Q(e(t))\dx +  \int_0^{t} \int_\Omega \varphi \de H(\dot p(s)) \ds \\
+\int_0^{t} \int_\Omega \sigma:(\nabla \varphi \odot \dot u)\dx\ds+   \int_0^{t} \int_\dom \varphi \psi(x, \dot{u})  \dh \ds+ \frac{1}{2} \int_0^{t} \int_\dom \varphi S^{-1}(\sigma \nu) \cdot (\sigma \nu) \dh \ds   \\
\geq \frac12 \int_\Omega \varphi |v_0|^2\dx + \int_\Omega \varphi \, Q(e_0)\dx + \int_0^{t} \int_\Omega \varphi f \cdot \dot{u}  \dx \ds \,.
\end{multline}
Gathering both energy inequalities \eqref{eq:ineq1} and \eqref{eq:ineq2} leads to the energy balance: for all $t \in [0,T]$ and all $\varphi \in C^\infty_c(\Rn)$,
\begin{multline}\label{eq:eqnrj2}
\frac12 \int_\Omega \varphi|\dot u(t)|^2\dx + \int_\Omega \varphi \,Q(e(t))\dx +  \int_{0}^{t} \int_\Omega \varphi \de H( \dot p(s))\ds \\
+\int_{0}^{t} \int_\Omega \sigma:(\nabla \varphi \odot \dot u)\dx\ds+   \int_{0}^{t} \int_\dom \varphi \psi(x, \dot{u})  \dh \ds+ \frac{1}{2} \int_{0}^{t} \int_\dom \varphi S^{-1}(\sigma \nu) \cdot (\sigma \nu) \dh \ds   \\
= \frac12 \int_\Omega \varphi |v_0|^2\dx + \int_\Omega \varphi \, Q(e_0)\dx + \int_0^t \int_\Omega \varphi f \cdot \dot{u}  \dx \ds \,.
\end{multline}

\subsubsection{Flow rule and relaxed boundary condition}

Derivating with respect to time the energy balance \eqref{eq:eqnrj2}, we get for a.e. $t \in [0,T]$ and all $\varphi \in C^\infty_c(\Rn)$,
\begin{multline*}
\int_\Omega \varphi \de H(\dot p(t))  + \int_{\partial \Omega}\varphi \psi(x,\dot u(t))\dh
 =-\int_\Omega \varphi \sigma(t) : \dot e(t)\dx- \int_\Omega \varphi \dot u(t)\cdot \diver \sigma(t) \dx\\
 - \int_\Omega \sigma(t) \colon (\dot u(t) \odot \nabla \varphi) \dx-\frac12\int_{\partial \Omega}  \varphi S^{-1}(\sigma(t)\nu)\cdot (\sigma(t)\nu)\dh.
\end{multline*}
Taking in particular a test function $\varphi \in C^\infty_c(\Omega)$ with compact support in $\Omega$,
\begin{eqnarray*}
\int_\Omega \varphi \de H(\dot p(t))  & =& -\int_\Omega \varphi \sigma(t) : \dot e(t)\dx- \int_\Omega \varphi \dot u(t)\cdot \diver \sigma(t) \dx
 - \int_\Omega \sigma(t) \colon (\dot u(t) \odot \nabla \varphi) \dx\\
& = & \langle [\sigma(t):\dot p(t)],\varphi\rangle,
\end{eqnarray*}
hence we get the measure theoretic flow rule
$$H(\dot p(t))=[\sigma(t):\dot p(t)] \quad \text{ in }\mathcal M_b(\Omega).$$

\medskip

Taking $\varphi\in C^\infty_c(\Rn)$ with $\varphi=1$ in $\overline \Omega$, we get from the energy balance \eqref{eq:eqnrj2} that
\begin{multline*}
\frac12 \int_\Omega |\dot u(t)|^2\dx + \mathcal Q(e(t)) +  \int_{0}^{t} \mathcal H(\dot p(s)) \ds 
+  \int_{0}^{t} \int_\dom  \psi(x, \dot{u})  \dh \ds \\ + \frac{1}{2} \int_{0}^{t} \int_\dom  S^{-1}(\sigma \nu) \cdot (\sigma \nu) \dh \ds   
=\frac12  \int_\Omega  |v_0|^2\dx + \mathcal Q(e_0) + \int_0^t \int_\Omega  f \cdot \dot{u}  \dx \ds \,,
\end{multline*}
and also from \eqref{eq:ineq1} that
$$\int_0^t \int_{\partial \Omega} \psi(x,\dot u)\dh\ds 
= \int_0^t \int_\dom  H\big( (\dot w-\dot u)\odot \nu \big)\dh\ds+\frac12  \int_0^{t} \int_\dom  S\dot w\cdot \dot w\dh\ds.$$
As a consequence for a.e. $(x,t) \in \partial \Omega \times (0,T)$, we have
$$ \psi(x,\dot u(x,t))= H\big( (\dot w(x,t)-\dot u(x,t))\odot \nu(x) \big)+\frac12    S(x)\dot w(x,t)\cdot \dot w(x,t)$$
which implies, in view of Lemma \ref{lem:infconv} and \eqref{eq:dzpsi}, that
$$\dot w(x,t)=S^{-1}(x)D_z \psi(x,\dot u(x,t))=S^{-1}(x) {\rm P}_{-\mathbf K\nu(x)} (S(x) \dot u(x,t)).$$
Inserting this expression into \eqref{eq:bc-partial} leads to the desired boundary condition
$$\sigma\nu +{\rm P}_{-\mathbf K\nu} (S \dot u)=0 \quad \text{ in }L^2(0,T;L^2(\partial\Omega;\Rn)).$$

\subsubsection{Uniqueness}
Let $(u_1,e_1,\sigma_1,p_1)$ and $(u_2,e_2,\sigma_2,p_2)$ be two variational solutions of the elasto--plastic model, according to Definition \ref{def:variational}, associated to the same initial data $(u_0,v_0,e_0,p_0)$, the same source term $f$ and the same boundary matrix $S$. Let us define
$$u:=\frac{u_1+u_2}{2}, \quad e:=\frac{e_1+e_2}{2}, \quad p:=\frac{p_1+p_2}{2}, \quad \sigma:=\frac{\sigma_1+\sigma_2}{2}.$$
Then, by linearity and convexity, we have
\begin{equation}\label{eq:init}
u(0)=u_0, \quad \dot{u}(0)=v_0, \quad e(0)=e_0, \quad p(0)=p_0,
\end{equation}
for all $t\in [0,T]$,
$$\E u(t) = e(t) + p(t) \quad \text{in } \Mbn,$$
$$\sigma =\C e\in \mathbf K\quad \text{a.e. in } \Omega \times (0,T),$$
and
$$\ddot{u}- \diver\sigma =f \quad \text{in }L^2(0,T; L^2(\Omega;\Rn)).$$

For $i=1$, $2$ and all $t \in [0,T]$, we define
\begin{multline*}
\mathcal E_t(u_i,e_i,p_i):=\frac12 \int_\Omega |\dot u_i(t)|^2\dx + \mathcal Q(e_i(t)) +  \int_{0}^{t} \mathcal H(\dot p_i(s))  \ds - \int_0^t \int_\Omega  f \cdot \dot{u}_i  \dx \ds\\
+  \int_{0}^{t} \int_\dom  \psi(x, \dot{u}_i)  \dh \ds+ \frac{1}{2} \int_{0}^{t} \int_\dom  S^{-1}(\sigma_i \nu) \cdot (\sigma_i \nu) \dh \ds,
\end{multline*}
and similarly for $\mathcal E_t(u,e,p)$.
By convexity of $H$ and $\psi$, we have for all $t \in [0,T]$
\begin{multline}\label{eq:firstineq}
\int_\Omega\left|\frac{u_1(t)-u_2(t)}{2}\right|^2\dx + \mathcal Q\left(\frac{e_1(t)-e_2(t)}{2}\right)+\mathcal E_t(u,e,p)\\
\leq \frac12 \mathcal E_t(u_1,e_1,p_1) + \frac12 \mathcal E_t(u_2,e_2,p_2)
= \frac12  \int_\Omega  |v_0|^2\dx + \mathcal Q(e_0) \,.
\end{multline}

According to Proposition \ref{prop:IPP} together with the equation of motion, we have that for a.e. $t \in (0,T)$,
\begin{multline*}
\mathcal H(\dot p(t)) + \int_\dom \psi(x,\dot u(t))\dh+ \frac{1}{2} \int_\dom S^{-1}(\sigma(t)\nu) \cdot (\sigma(t) \nu) \dh\\
\geq - \int_\Omega \sigma(t):\dot e(t)\dx -  \int_\Omega \ddot u(t) \cdot \dot u(t)\dx+\int_\Omega f(t)\cdot \dot u(t)\dx.\end{multline*}
We next integrate by parts with respect to time and use the initial conditions \eqref{eq:init} to obtain that 
\begin{equation}\label{eq:secondineq}
\mathcal E_t(u,e,p) \geq \frac12 \int_\Omega |v_0|^2\dx + \mathcal Q(e_0).
\end{equation}
Gathering \eqref{eq:firstineq} and \eqref{eq:secondineq} implies that $u_1=u_2$ and $e_1=e_2$, hence $p_1=p_2$ as well.

\begin{remark}\label{rem:unique}
Let us observe that our proof of uniqueness does not explicitely use the dissipative boundary condition, but rather its weak formulation through the energy balance. As a consequence, we proved the uniqueness of solutions among all triples $(u,e,p)$
satisfying all requirements in Definition \ref{def:variational} except the relaxed boundary condition.
\end{remark}

\section{Entropic-dissipative solutions}

Following the discussion of  Subsection~\ref{sec:entrop},  we introduce the following generalized notion of solutions to the  dynamic  elasto--plasticity model.

\begin{definition}\label{def:entropic}
Assume that  hypotheses $(H_1)$--$(H_4)$ are satisfied. A pair 
$$(v,\sigma) \in W^{1,\infty}(0,T;L^2(\Omega;\Rn \times \mathbf K))$$
defines an {\it entropic--dissipative solution to the dynamic elasto--plasticity model} associated to the initial data $(v_0,\sigma_0) \in L^2(\Omega;\Rn\times \mathbf K)$ and the source term $f \in L^2(0,T;L^2(\Omega;\Rn))$ if
\begin{multline*}
\int_0^T\int_\Omega  \left(  |v-z|^2 +\mathbf A^{-1}(\sigma-\tau): (\sigma-\tau)\right) \partial_t \varphi\dx \de t-2\int_0^T\int_\Omega (\sigma -\tau):((v-z)\odot \nabla \varphi )\dx \de t \\
+\int_\Omega \varphi(0) \left(  |v_0-z|^2 +\mathbf A^{-1}(\sigma_0-\tau): (\sigma_0-\tau)\right)\dx\\
 +  2 \int_0^T\int_\Omega f\cdot(v-z) \varphi \dx \de t+\frac12\int_0^T\int_\dom   S^{-1}(\tau\nu+Sz)\cdot(\tau\nu+Sz)\varphi\dh \de t\geq 0,
\end{multline*}
for all constant vectors $(z,\tau) \in \R^n\times \mathbf K$ and all test functions $\varphi \in C^\infty_c(\R^n \times [0,T))$ with $\varphi \geq 0$.
\end{definition}

We intend to show that both notions of variational and entropic--dissipative solutions actually coincide. Using the elasto--visco--plastic approximation, we start by proving that variational solutions are entropic--dissipative ones.

\begin{theorem}\label{thm:vardiss}
Assuming $(H_1)$--$(H_6)$, let $(u,e,p)$ be the unique variational solution (obtained in Theorem \ref{teo:existence_variational}) to the dynamic elasto--plastic model associated to the initial data $(u_0,v_0,e_0,p_0)$ and the source term $f$ according to Definition \ref{def:variational}. Then $(v=\dot u,\sigma=\mathbf Ae)$ is an entropic--dissipative solution associated to the initial data $(v_0,\sigma_0=\mathbf A e_0)$ and the source term $f$ according to Definition~\ref{def:entropic}. 
\end{theorem}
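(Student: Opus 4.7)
The strategy is to establish the inequality of Definition~\ref{def:entropic} first at the level of the elasto--visco--plastic approximation $(u_\varepsilon, e_\varepsilon, p_\varepsilon)$ of Proposition~\ref{teo:apprvisc}---for which every manipulation is rigorous, thanks to the regularity asserted there---and then to pass to the limit $\varepsilon \to 0$ using the strong convergences already obtained in the proof of Theorem~\ref{teo:existence_variational}. Setting $v_\varepsilon := \dot u_\varepsilon$ and $\sigma_\varepsilon := \mathbf{A} e_\varepsilon$, I fix constants $z \in \R^n$, $\tau \in \mathbf K$ and a nonnegative $\varphi \in C^\infty_c(\R^n \times [0,T))$. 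Multiplying the equation of motion by $2(v_\varepsilon - z)\varphi$ and the constitutive law $\mathbf{A}^{-1}\dot\sigma_\varepsilon = \E v_\varepsilon - \dot p_\varepsilon$ by $2\mathbf{A}^{-1}(\sigma_\varepsilon - \tau)\varphi$, then summing, I use the identity $\diver(Aw) = (\diver A)\cdot w + A\colon \E w$ for $A$ symmetric, together with the rewriting $2(\sigma_\varepsilon - \tau)\colon \E v_\varepsilon = 2(\sigma_\varepsilon + \varepsilon \E v_\varepsilon - \tau)\colon \E v_\varepsilon - 2\varepsilon |\E v_\varepsilon|^2$, to arrive at the pointwise identity
\begin{multline*}
\partial_t\big(|v_\varepsilon - z|^2 + \mathbf{A}^{-1}(\sigma_\varepsilon - \tau)\colon (\sigma_\varepsilon - \tau)\big) = 2\diver\big[(\sigma_\varepsilon + \varepsilon \E v_\varepsilon - \tau)(v_\varepsilon - z)\big] \\
- 2\varepsilon |\E v_\varepsilon|^2 - 2(\sigma_\varepsilon - \tau)\colon \dot p_\varepsilon + 2f\cdot(v_\varepsilon - z).
\end{multline*}

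Two sign properties will be decisive: $\varepsilon |\E v_\varepsilon|^2 \geq 0$, and $(\sigma_\varepsilon - \tau)\colon \dot p_\varepsilon \geq 0$ whenever $\tau \in \mathbf K$---the latter is a direct consequence of the Perzyna flow rule $\dot p_\varepsilon = \varepsilon^{-1}(\sigma_\varepsilon - \mathrm{P}_{\mathbf K}\sigma_\varepsilon)$ and the variational characterization of the orthogonal projection onto $\mathbf K$. Integrating the identity against $\varphi$ on $\Omega \times (0,T)$ and performing integration by parts in space and time (with $\varphi(T) = 0$), the divergence term produces the boundary contribution $-2\int_0^T \int_{\partial \Omega}\big((\sigma_\varepsilon + \varepsilon \E v_\varepsilon - \tau)\nu\big)\cdot(v_\varepsilon - z)\varphi \dh \de t$. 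Substituting the strong viscous boundary condition $(\sigma_\varepsilon + \varepsilon \E v_\varepsilon)\nu = g_\varepsilon - Sv_\varepsilon$ and exploiting the symmetry of $S$, I shall invoke the completion--of--square in the $S^{-1}$ inner product, namely
$$ -2(Sv_\varepsilon + \tau\nu)\cdot(v_\varepsilon - z) = \tfrac12 S^{-1}(Sz + \tau\nu)\cdot(Sz + \tau\nu) - \tfrac12 S^{-1}\big(2(Sv_\varepsilon + \tau\nu) - (Sz + \tau\nu)\big)\cdot\big(2(Sv_\varepsilon + \tau\nu) - (Sz + \tau\nu)\big), $$
valid for all $v_\varepsilon, z \in \R^n$ and $\tau \in \Mnn$, which yields the upper bound
$$ 2\big((\sigma_\varepsilon + \varepsilon \E v_\varepsilon - \tau)\nu\big)\cdot(v_\varepsilon - z) \leq 2 g_\varepsilon\cdot(v_\varepsilon - z) + \tfrac12 S^{-1}(Sz + \tau\nu)\cdot(Sz + \tau\nu). $$
This reproduces precisely the dissipative boundary term $M\xi^{+}\cdot \xi^{+} = \tfrac12 S^{-1}(\tau\nu + Sz)\cdot(\tau\nu + Sz)$ already identified in \eqref{eq:usefull-formula2}. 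Combining this estimate with the two pointwise sign properties yields a viscous version of the entropic inequality of Definition~\ref{def:entropic}, up to the residual boundary term involving $g_\varepsilon$.

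The last step is the passage to the limit $\varepsilon \to 0$, which is straightforward. The strong convergences $v_\varepsilon \to v$ and $\sigma_\varepsilon \to \sigma$ in $L^2(0,T;L^2(\Omega))$, the bound $\varepsilon\|\E v_\varepsilon\|^2_{L^2((0,T)\times\Omega)} \leq C$ from the viscous energy balance \eqref{en_balan_visc} (giving in particular $\varepsilon \E v_\varepsilon \to 0$ strongly in $L^2$), the boundedness of $v_\varepsilon$ in $L^2((0,T)\times \partial\Omega;\R^n)$, and the strong convergence $g_\varepsilon = \varepsilon \E v_0\,\nu \to 0$ in $L^2(\partial \Omega;\R^n)$, all obtained in the course of the proof of Theorem~\ref{teo:existence_variational}, allow every term to pass to the limit; the contribution $\int_\Omega(|v_0 - z|^2 + \mathbf{A}^{-1}(\sigma_0 - \tau)\colon (\sigma_0 - \tau))\varphi(0)\dx$ is $\varepsilon$--independent. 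The limiting inequality coincides with that of Definition~\ref{def:entropic}. The only step demanding genuine care is the algebraic completion--of--square on the boundary, which is the concrete counterpart of the abstract algebraic identity \eqref{eq:usefull-formula2}; once this is recognised, the remainder is routine bookkeeping.
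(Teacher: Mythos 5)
Your argument is correct and follows essentially the same path as the paper's proof: start from the viscous equation of motion tested against $\varphi(\dot u_\varepsilon - z)$, use the Perzyna flow rule to obtain $(\sigma_\varepsilon - \tau)\colon \dot p_\varepsilon \geq 0$ for $\tau \in \mathbf K$, absorb the boundary term via the dissipative boundary condition and a completion--of--square in the $S^{-1}$ inner product, and then pass to the limit using the strong convergences from Theorem~\ref{teo:existence_variational}.

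The one technical variant worth noting is where you perform the boundary algebra. You complete the square at the viscous level, directly from the exact boundary condition $S\dot u_\varepsilon + (\sigma_\varepsilon + \varepsilon\E\dot u_\varepsilon)\nu = g_\varepsilon$, so the boundary term is bounded above by $2g_\varepsilon\cdot(\dot u_\varepsilon - z) + \tfrac12 S^{-1}(\tau\nu + Sz)\cdot(\tau\nu + Sz)$ before any limit is taken; the $g_\varepsilon$ contribution then vanishes at the end. The paper instead passes the exact boundary term $\int(\sigma_\varepsilon + \varepsilon\E\dot u_\varepsilon - \tau)\nu\cdot(\dot u_\varepsilon - z)\varphi$ to the limit first, identifying the boundary trace limit as $\dot w = S^{-1}D_z\psi(\cdot,\dot u)$, and only then applies a parallelogram identity using the \emph{relaxed} boundary condition $\sigma\nu + D_z\psi(\cdot,\dot u) = 0$. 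Your version is marginally cleaner in that it does not need to invoke the relaxed boundary condition at all, only the exact viscous one; both arguments are, however, equivalent in substance.
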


\begin{proof}
We multiply the equation of motion $\ddot u_\varepsilon -\diver (\sigma_\varepsilon + \varepsilon \E \dot u_\varepsilon-\tau)=f$ by $\varphi (\dot u_\varepsilon-z)$ where $\varphi \in C^\infty_c(\Rn\times [0,T))$ with $\varphi\geq 0$, and integrate by parts in space and time. Then
\begin{equation*}
\begin{split}
&\int_0^T\int_\Omega\varphi f\cdot(\dot u_\varepsilon-z)\dx\de t+\int_0^T\int_{\partial\Omega} (\sigma_\varepsilon + \varepsilon \E \dot u_\varepsilon-\tau)\nu \cdot (\dot u_\varepsilon-z)\varphi\dh \de t\\
&=\int_0^T\int_\Omega \left(\varphi \ddot u_\varepsilon \cdot (\dot u_\varepsilon-z) + \varphi (\sigma_\varepsilon + \varepsilon \E \dot u_\varepsilon-\tau):\E\dot u_\varepsilon+(\sigma_\varepsilon + \varepsilon \E \dot u_\varepsilon-\tau):((\dot u_\varepsilon-z) \odot \nabla \varphi\right)\dx \de t\\
&=\frac12\int_0^T \int_\Omega \varphi \partial_t \left(  |\dot u_\varepsilon-z|^2 +\mathbf A(e_\varepsilon-\mathbf A^{-1}\tau): (e_\varepsilon-\mathbf A^{-1}\tau)\right)\dx \de t
+\int_0^T\int_\Omega \varphi (\sigma_\varepsilon -\tau):\dot p_\varepsilon\dx \de t 
\\& \hspace{1em} + \varepsilon \int_0^T \int_\Omega |\E \dot u_\varepsilon|^2\dx \de t+  \int_0^T\int_\Omega (\varepsilon \E \dot u_\varepsilon + (\sigma_\varepsilon -\tau)):((\dot u_\varepsilon-z) \odot \nabla \varphi)\dx \de t\,.
\end{split}
\end{equation*}
Since $\tau \in \mathbf K$, the flow rule implies that $(\sigma_\varepsilon -\tau):\dot p_\varepsilon \geq 0$ a.e. in $\Omega \times (0,T)$. Hence, integrating by parts the first integral in the right--hand-side yields
\begin{equation*}
\begin{split}
& \int_0^T \int_\Omega\varphi f\cdot(\dot u_\varepsilon-z)\dx \de t+\int_0^T\int_{\partial\Omega} (\sigma_\varepsilon + \varepsilon \E \dot u_\varepsilon-\tau)\nu \cdot (\dot u_\varepsilon-z)\varphi\dh \de t\\
&\geq -\frac12\int_0^T \int_\Omega \partial_t \varphi \left(  |\dot u_\varepsilon-z|^2 +\mathbf A(e_\varepsilon-\mathbf A^{-1}\tau): (e_\varepsilon-\mathbf A^{-1}\tau)\right)\dx \de t\\
& \hspace{1em}-\frac12\int_\Omega \varphi(0) \left(  |v_0-z|^2 +\mathbf A(e_0-\mathbf A^{-1}\tau): (e_0-\mathbf A^{-1}\tau)\right)\dx \de t \\ & \hspace{1em}
+ \int_0^T\int_\Omega (\varepsilon \E \dot u_\varepsilon + (\sigma_\varepsilon -\tau)):((\dot u_\varepsilon-z) \odot \nabla \varphi)\dx  \de t\,.
\end{split}
\end{equation*}
Passing to the limit as $\varepsilon \to 0$ and using the previously established convergence leads to
\begin{multline*}
2 \int_0^T \int_\Omega\varphi f\cdot(\dot u-z)\dx\de t+2\int_0^T \int_\dom \varphi (\sigma-\tau)\nu \cdot \left(S^{-1} D_z\psi(\, \bullet\, ,\dot u)-z\right)\dh \de t\\
+\int_0^T \int_\Omega \partial_t \varphi \left(  |\dot u-z|^2 +\mathbf A^{-1}(\sigma-\tau): (\sigma-\tau)\right)\dx \de t\\
+\int_\Omega \varphi(0) \left(  |v_0-z|^2 +\mathbf A^{-1}(\sigma_0-\tau): (\sigma_0-\tau)\right)\dx \de t\\
 -2\int_0^T \int_\Omega (\sigma -\tau):((\dot u-z)\odot \nabla \varphi )\dx\de t \geq 0\,.
\end{multline*}
Using the parallelogram identity, we have
\begin{equation*}
\begin{split}
2 & (\sigma-\tau)\nu \cdot \left(S^{-1} D_z\psi(\, \bullet\, ,\dot u)-z\right)  =  2(\sigma-\tau)\nu \cdot S^{-1} \left(D_z\psi(\, \bullet\, ,\dot u)-Sz\right)\\
&=2 \left\|\frac{(\sigma-\tau)\nu}{2}+\frac{D_z\psi(\, \bullet\, ,\dot u)-Sz}{2} \right\|^2_{S^{-1}} -2\left\|\frac{(\sigma-\tau)\nu}{2}-\frac{D_z\psi(\, \bullet\, ,\dot u)-Sz}{2} \right\|^2_{S^{-1}}\\
& \leq  2\left\|\frac{(\sigma-\tau)\nu}{2}+\frac{D_z\psi(\, \bullet\, ,\dot u)-Sz}{2} \right\|^2_{S^{-1}} =  \frac12 \left\|\tau\nu+Sz \right\|^2_{S^{-1}}=\frac12 S^{-1}(\tau\nu+Sz)\cdot(\tau\nu+Sz)\,,
\end{split}
\end{equation*}
where we used the boundary condition $\sigma\nu+D_z\psi(\,\bullet \,,\dot u)=0$ on $\dom \times (0,T)$.
We deduce that
\begin{multline*}
2 \int_0^T\int_\Omega\varphi f\cdot(\dot u-z)\dx\de t+\frac12\int_0^T \int_\dom \varphi  S^{-1}(\tau\nu+Sz)\cdot(\tau\nu+Sz)\dh \de t\\
 {+}\int_0^T \int_\Omega \partial_t \varphi \left(  |\dot u-z|^2 +\mathbf A^{-1}(\sigma-\tau): (\sigma-\tau)\right)\dx \de t\\
{+}\int_\Omega \varphi(0) \left(  |v_0-z|^2 +\mathbf A^{-1}(\sigma_0-\tau): (\sigma_0-\tau)\right)\dx \de t\\
 -2\int_0^T \int_\Omega (\sigma -\tau):((\dot u-z)\odot \nabla \varphi )\dx \de t \geq 0\,,
\end{multline*}
which completes the proof of the Theorem.
\end{proof}

We now prove the converse implication, namely that any entropic--dissipative solution  generates  a variational solution to the dynamic elasto--plastic model.

\begin{theorem}\label{thm:dissvar}
Assuming $(H_1)$--$(H_5)$, let $(v, \sigma) \in W^{1,\infty}(0,T;L^2(\Omega;\R^n \times \mathbf K))$ be an  entropic--dissipative solution to the dynamic elasto--plasticity model associated to the initial data $(v_0,\sigma_0) \in H^2(\Omega;\R^n) \times \mathcal K(\Omega)$ with $Sv_0+\sigma_0\nu=0$ on $\dom$, and the source term $f \in H^1(0,T;L^2(\Omega;\R^n))$. Then, for any initial displacement $u_0 \in H^1(\Omega;\R^n)$, 
there exists a triple $(u,e,p)$, with $v=\dot{u}$, $\sigma=\mathbf{A}e$, which is a variational solution to the dynamic elasto--plastic model associated to the initial data $(u_0, v_0, e_0, p_0=\mathrm{E} u_0-e_0)$ and the source term $f$.
\end{theorem}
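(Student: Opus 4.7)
The plan is to build $(u,e,p)$ from the entropic--dissipative pair $(v,\sigma)$, verify every clause of Definition~\ref{def:variational} apart from (possibly) the relaxed boundary condition, and then appeal to Remark~\ref{rem:unique} to identify the resulting triple with the unique variational solution furnished by Theorem~\ref{teo:existence_variational} (whose hypotheses $(H_1)$--$(H_6)$ are met by assumption), so that the relaxed boundary condition comes for free. Set
\[
u(t) := u_0 + \int_0^t v(s)\ds, \qquad e := \mathbf A^{-1}\sigma, \qquad p := \E u - e,
\]
so that $\dot u = v$, $\sigma = \mathbf A e \in \mathbf K$ almost everywhere, and the initial data $(u(0),\dot u(0),e(0)) = (u_0,v_0,e_0)$ and the additive decomposition are automatic.

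\textbf{Interior analysis: equation of motion and flow rule.} Test the entropic inequality against $\varphi \in C^\infty_c(\Omega \times (0,T))$ with $\varphi \geq 0$, which kills both the initial and the boundary terms. Using the pointwise identity $\partial_t(\mathbf A^{-1}\sigma:\sigma) = 2\sigma:\dot e$, distributional integration by parts, and the cancellation $|z|^2\int_0^T \partial_t\varphi\, \de t = 0$ that follows from $\varphi(0)=\varphi(T)=0$, the inequality splits, for every $(z,\tau)\in \R^n\times \mathbf K$, as
\[
\mu_{(z,\tau)}(\varphi) = \mu_{(0,0)}(\varphi) + 2\, z \cdot \langle \dot v - \diver\sigma - f,\varphi\rangle - 2\, \tau : \langle \dot p,\varphi\rangle \geq 0,
\]
where $\dot p := \E v - \dot e$ and $\mu_{(0,0)}(\varphi)$ is a non-negative Radon measure tested against $\varphi$. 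The freedom of $z \in \R^n$ forces the linear functional $\varphi \mapsto \langle \dot v - \diver\sigma - f,\varphi\rangle$ to vanish, giving the equation of motion $\ddot u - \diver\sigma = f$ first distributionally and then, by the regularity $\dot v \in L^\infty(0,T;L^2)$ and $f \in L^2(0,T;L^2)$, in $L^2(0,T; L^2(\Omega;\R^n))$. The residual inequality, valid for every $\tau\in \mathbf K$, shows that $\mu_{(0,0)} \geq 2H(\dot p)$ as measures; combined with the a~priori stress--strain inequality \eqref{eq:conv-ineq} and with the identification of $\mu_{(0,0)}$ with $2[\sigma:\dot p]$ obtained by substituting the equation of motion back into the computation of $\mu_{(0,0)}$, this yields the flow rule $H(\dot p(t)) = [\sigma(t):\dot p(t)]$ in $\mathcal M_b(\Omega)$. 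The corresponding dissipation estimates then promote $p$ to $C^{0,1}([0,T];\mathcal M_b(\Omega;\Mnn))$ and $u$ to $C^{0,1}([0,T]; BD(\Omega))$.

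\textbf{Boundary analysis and energy balance.} The delicate step is to upgrade the canonical trace $\sigma(t)\nu$ from an element of $H^{-1/2}(\dom;\R^n)$ to a genuine $L^2$-function. I would first plug test functions $\varphi \in C^\infty_c(\R^n \times [0,T))$ with support touching $\dom$ into the entropic inequality, apply Green's formula together with the already-derived equation of motion to collapse the bulk contribution, and exploit the positive-definite quadratic structure of the boundary integrand $\tfrac12 S^{-1}(\tau\nu+Sz)\cdot(\tau\nu+Sz)$ to first recognize $\sigma(t)\nu$ as a bounded Radon measure on $\dom$. Since this measure is multiplied in the resulting inequality by the arbitrary vector $z\in\R^n$, while all other boundary terms are absolutely continuous with respect to $\hn\mres\dom$, the singular part of $\sigma(t)\nu$ must vanish, yielding $\sigma\nu \in L^\infty(0,T;L^2(\dom;\R^n))$. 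The choice $(z,\tau)=(0,0)$ in the entropic inequality, combined with a temporal cutoff, then delivers one half of the energy balance \eqref{en_balance}, while Proposition~\ref{prop:IPP} (whose hypotheses are now fully met) supplies the reverse inequality, just as in Section~\ref{sec:energy-balance}. Since every requirement of Definition~\ref{def:variational} aside from the relaxed boundary condition is in place, Remark~\ref{rem:unique} identifies $(u,e,p)$ with the unique variational solution from Theorem~\ref{teo:existence_variational}, which \emph{does} satisfy the relaxed boundary condition, concluding the proof.

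\textbf{Main obstacle.} The principal difficulty throughout is the $L^2$-regularity of $\sigma\nu$. The Chen--Frid trace theorem used in \cite{BM17} is unavailable in our setting, because $\sigma$ is symmetric matrix-valued and takes values in the unbounded convex set $\mathbf K$, so no analogue of \cite{CD} for matrix fields is at our disposal. The scheme above bypasses this obstruction by extracting the boundary regularity algebraically from the entropic inequality itself, exploiting only the linear appearance of the parameter $z\in\R^n$ and the positive-definite quadratic boundary integrand inherited from $S$.
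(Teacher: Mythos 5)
Your overall strategy coincides with the paper's: construct $(u,e,p)$ from $(v,\sigma)$, extract the equation of motion and the interior flow rule from compactly supported test functions, localize the entropic inequality on the boundary to upgrade the distributional normal trace $\sigma\nu$ to an $L^2$ function (using the arbitrariness of $z\in\Rn$ to kill the singular part), establish the energy balance, and invoke Remark~\ref{rem:unique} to conclude. There is, however, one genuine gap in your construction: you assert that the initial conditions $\dot u(0)=v_0$ and $e(0)=e_0$ are ``automatic.'' They are not. In the entropic--dissipative formulation, the data $(v_0,\sigma_0)$ appear only as weights multiplied by $\varphi(0)$, and nothing in your scheme forces the entropic pair $(v,\sigma)$ to actually attain these values at $t=0$. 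The paper's Step~1 fills this in by invoking \cite[Lemma~3]{DMS} (verified to extend to the constrained case) to obtain $v(t)\to v_0$ and $\sigma(t)\to\sigma_0$ in the essential-limit sense, and then uses the $W^{1,\infty}$-in-time regularity of $(v,\sigma)$ to upgrade to $(v(0),\sigma(0))=(v_0,\sigma_0)$. Without this step, the triple you build cannot be identified with the variational solution associated to $(u_0,v_0,e_0,p_0)$ via Remark~\ref{rem:unique}, since uniqueness compares solutions with the \emph{same} initial data.

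A secondary imprecision: you attribute the upper half of the energy balance \eqref{en_balance} to the choice $(z,\tau)=(0,0)$ together with a temporal cutoff. That choice only yields $\tfrac12\int_\Omega|v(t)|^2\dx+\Q(e(t))\leq\tfrac12\int_\Omega|v_0|^2\dx+\Q(e_0)+\int_0^t\int_\Omega f\cdot\dot u\dx\ds$, which omits the dissipation terms $\int_0^t\mathcal H(\dot p)\ds$, $\int_0^t\int_\dom\psi(x,\dot u)\dh\ds$, and $\tfrac12\int_0^t\int_\dom S^{-1}(\sigma\nu)\cdot(\sigma\nu)\dh\ds$. Those only emerge after maximizing the boundary inequality over $\tau\in\mathbf K$ and then over $z\in\Rn$ so as to recognize the inf-convolution $\psi$ and the quadratic trace term — the content of the paper's Step~5 culminating in \eqref{eq:ineqmut2}. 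Your earlier remark about exploiting the quadratic boundary integrand suggests you have this in mind, but the $(z,\tau)=(0,0)$ phrasing is an incorrect shortcut that would not by itself close the argument against Proposition~\ref{prop:IPP}.
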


\begin{proof}   We divide the proof in several steps, the four first ones closely following the proof of \cite[Proposition~7.2]{BM17}. 
\medskip
\paragraph{\it Step 1: Initial conditions for $v$ and $\sigma$.}  By a general property of dissipative formulations of initial-boundary value problems proven in \cite[Lemma~3]{DMS}, the initial condition is satisfied in the essential limit sense, i.e.,
$$\lim_{t\to 0} \lim_{\alpha \to 0} \frac{1}{\alpha} \int_{t-\alpha}^t \int_\Omega |\sigma(x,s)-\sigma_0(x)|^2\dx \ds =0, \quad \lim_{t\to 0} \lim_{\alpha \to 0} \frac{1}{\alpha} \int_{t-\alpha}^t \int_\Omega |v(x,s)-v_0(x)|^2  \dx \ds =0\,.$$
 Note that in \cite{DMS}, the authors consider unconstrained Friedrichs systems. However, a careful inspection of the proof of \cite[Lemma~3]{DMS} reveals that this result still holds in the constrained case.
Together with the property that $\sigma \in W^{1,\infty}(0,T;L^2(\Omega;\Mnn))$ and $v \in W^{1,\infty}(0,T;L^2(\Omega;\Rn))$, that gives $(\sigma(t),v(t)) \to (\sigma(0),v(0))$ in $L^2(\Omega;\Mnn) \times L^2(\Omega;\Rn)$ as $t \to 0$, the identities above imply that $(\sigma(0),v(0))=(\sigma_0,v_0)$.

\medskip

\paragraph{\it Step 2: Definition of $(u, e, p)$ and preliminary properties.} For all $t \in [0,T]$, we set
$$u(t):= u_0 + \int_0^t v(s) \ds$$
as a Bochner integral in $L^2(\Omega;\R^n)$. In view of the Lipschitz regularity of $v$, we get that $u \in W^{2,\infty}(0,T; L^2(\Omega; \R^n))$. By construction $u(0)=u_0$ and, by Step~1, $\dot{u}(0)=v_0$. Let us define
\begin{equation}\label{1410191146}
e:=\mathbf{A}^{-1}\sigma\ \in W^{1,\infty}(0,T;L^2(\Omega; \mathbb M^n_{\rm sym}))\,,\qquad   p:=\mathrm{E}u -e \in W^{1,\infty}(0,T; H^{-1}(\Omega; \Mnn)) \,.
\end{equation} 
Integrating by parts with respect to time the entropic--dissipative inequality in Definition~\ref{def:entropic}, we get that for any $\varphi \in C^1_c(\R^n \times [0,T))$ with $\varphi \geq 0$ and any $(z,\tau) \in \R^n \times \mathbf{K}$
\begin{multline*}
-2 \int_0^T\int_\Omega  \left( \ddot{u} \cdot (\dot{u}-z) + \dot{e} \colon (\sigma-\tau)\right) \varphi\dx \de t-2\int_0^T\int_\Omega (\sigma -\tau):((\dot{u}-z)\odot \nabla \varphi )\dx\de t \\
 +  2 \int_0^T\int_\Omega f\cdot(\dot{u}-z) \varphi \dx\de t+\frac12\int_0^T\int_\dom   S^{-1}(\tau\nu+Sz)\cdot(\tau\nu+Sz)\varphi\dh \de t\geq 0\,.
\end{multline*}
Factorizing  in $z$ and $\tau$, and performing a further integration by parts, we get that 
\begin{multline}\label{1410191339}
- \int_0^T\int_\Omega  \Big( \left( \ddot{u} \cdot \dot{u} +\dot{e} \colon \sigma - f \cdot \dot{u }\right) \varphi +  \sigma :( \nabla \varphi \odot \dot{u}) \Big)\dx \de t +z \,\cdot \int_0^T\int_\Omega  \left( \ddot{u} \varphi + \sigma\, \nabla \varphi - f\, \varphi \right) \dx \de t \\
 + \tau \, \colon \int_0^T \int_\Omega \left( \dot{e} \varphi+ \dot{u}\odot \nabla \varphi  \right)\dx  \de t
 +\frac14\int_0^T\int_\dom   S^{-1}(\tau\nu-Sz)\cdot(\tau\nu-Sz)\varphi\dh \de t\geq 0\,.
\end{multline}

With the choice $(z, \tau)=0$ we obtain that
$$- \int_0^T\int_\Omega  \Big( \left( \ddot{u} \cdot \dot{u} +\dot{e} \colon \sigma - f \cdot \dot{u }\right) \varphi +  \sigma :(\nabla \varphi \odot \dot{u}) \Big)\dx \,\de t \geq 0$$
for any $\varphi \in C^1_c(\R^n \times [0,T))$ with $\varphi \geq 0$, and localizing in time this gives  for a.e.\ $t \in (0,T)$,
\begin{equation}\label{1410191330}
\langle \mu(t), \phi \rangle:=- \int_\Omega  \Big( \left( \ddot{u}(t) \cdot \dot{u}(t) +\dot{e}(t)\colon \sigma(t) - f(t) \cdot \dot{u }(t)\right) \phi +  \sigma(t) :( \nabla \phi \odot \dot{u}(t)) \Big)\dx \geq 0
\end{equation}
for any $\phi \in C^1_c(\R^n)$ with $\phi \geq 0$.
We deduce that for a.e.\ $t \in (0,T)$ the (nonnegative) distribution $\mu(t)$ uniquely extends to a nonnegative measure, still denoted by $\mu(t)$, compactly supported in $\overline{\Omega}$. Moreover, a density argument and Fubini's Theorem imply that the function $t \mapsto \langle \mu(t), \phi \rangle$ is measurable for any $\phi \in C_c(\R^n)$, so that $\mu \colon t \mapsto \mu(t)$ is weak$^*$-measurable in $\Mb(\R^n)$.
Eventually, since $\mu(t)$ has compact support in $\overline{\Omega}$, we can evaluate its mass by taking $\phi \equiv 1$, which gives
$$\mu(t)(\R^n)= \langle \mu(t), 1 \rangle = - \int_\Omega  \big(\ddot{u}(t) \cdot \dot{u}(t) +\dot{e}(t) \colon \sigma(t) - f(t) \cdot \dot{u}(t)\big) \dx \,.$$
By the regularity properties on $u$, $\sigma$, and $f$, we obtain ${\rm ess\; sup}_{t \in (0,T)} \mu(t)(\R^n) < +\infty$, so that 
\begin{equation}\label{1410192022}
\mu \in L^\infty_{w^*}(0,T; \Mb^+(\R^n))\,.
\end{equation}

\medskip

\paragraph{\it Step 3: Equation of motion.} Choosing $\tau =0$, $z \in \R^n$ and $\varphi \in C^1_c(\Omega \times (0,T))$ with $\varphi\geq 0$  arbitrarily in \eqref{1410191339},  we get that
$$\int_0^T\int_\Omega  \left( \ddot{u} \varphi + \sigma\, \nabla \varphi - f\, \varphi \right) \dx \de t=0,$$
which implies the validity of the equation of motion
$$\ddot{u}-\diver \sigma= f \quad \text{in }L^2(0,T; L^2(\Omega;\R^n))\,,$$
and, in particular, $\sigma \in L^\infty(0,T; H(\mathrm{div}; \Omega))$.
Recalling \eqref{2911181910}, we can define the normal trace $\sigma \nu$ as a element of $L^\infty(0,T; H^{-\frac12}(\partial \Omega;\Rn))$ and, for every $z \in \Rn$ and $\varphi \in C^1_c(\R^n \times [0,T))$
$$z\cdot \int_0^T\int_\Omega  \left( \ddot{u} \, \varphi + \sigma\, \nabla \varphi - f\, \varphi \right) \dx \de t=\int_0^T \langle \sigma \nu, z\,\varphi \rangle_{\dom}  \de t \,. $$
Inserting the above identity in \eqref{1410191339} yields
\begin{multline}\label{1410191524}
- \int_0^T\int_\Omega  \Big( \left( \ddot{u} \cdot \dot{u} +\dot{e} \colon \sigma - f \cdot \dot{u }\right) \varphi +  \sigma :(\nabla \varphi \odot \dot{u}) \Big)\dx \de t + \int_0^T \langle \sigma\nu, z\varphi \rangle_\dom  \de t \\
 + \tau \, \colon \int_0^T \int_\Omega \left( \dot{e} \varphi+ \dot{u}\odot \nabla \varphi  \right)\dx  \de t
 +\frac14\int_0^T\int_\dom   S^{-1}(\tau\nu-Sz)\cdot(\tau\nu-Sz)\varphi\dh \de t\geq 0\,,
\end{multline}
for any $\varphi \in C^1_c(\R^n \times [0,T))$ with $\varphi \geq 0$ and any $(z,\tau) \in \R^n \times \mathbf{K}$. 

\medskip

\paragraph{\it Step 4: Flow rule.} With the choice $z=0$ in \eqref{1410191524}, and recalling the expression \eqref{1410191330} of $\mu(t)$, we have that 
$$-\tau \, \colon \int_0^T \int_\Omega \left( \dot{e} \varphi+ \dot{u}\odot \nabla \varphi  \right)\dx \de t \leq \int_0^T \int_\Omega \varphi \, \de \mu(t)  \de t\,,$$
for any $\tau \in \mathbf{K}$ and any $\varphi \in C^1_c(\Omega \times (0,T))$ with $\varphi\geq 0$.
By \eqref{1410191146} we can recast the above estimate into 
$$ \int_0^T \langle \tau:\dot{p}(t), \varphi(t) \rangle_{H^{-1}(\Omega),H^1_0(\Omega)} \de t \leq \int_0^T \int_\Omega \varphi \, \de \mu(t)  \de t \quad \text{for any } \varphi \in C^1_c(\Omega \times [0,T))\,,\ \varphi\geq 0\,,$$
that may be localized in time, since $\dot{p} \in L^\infty(0,T; H^{-1}(\Omega; \Mnn))$, to get, for a.e.\ $t \in (0,T)$,
\begin{equation}\label{eq:phi1A}
\langle \tau:\dot{p}(t), \phi \rangle_{H^{-1}(\Omega),H^1_0(\Omega)}  \leq \int_\Omega \phi  \de \mu(t) \quad \text{for any } \phi \in C^1_c(\Omega)\,,\ \phi\geq 0.
\end{equation}
This last property implies that, for a.e. $t \in (0,T)$ and all $\tau \in \mathbf K$, the distribution $\mu(t)-\tau:\dot p(t)$ is nonnegative. Thus, since $\mu(t)$ is a measure, we infer that the distribution $\dot p(t)$ uniquely extends to a measure in $\Omega$. 
Moreover, since $p\in W^{1,\infty}(0,T; H^{-1}(\Omega;\Mnn))$, the function $t \mapsto \langle \dot{p}(t), \phi \rangle$ is measurable for all  $\phi \in C^1_c(\Omega)$, and by density this holds true also for all $\phi \in C_c(\Omega)$. Therefore $t\mapsto \dot{p}(t)$ is weak$^*$ measurable in $\mathcal M_b(\Omega;\Mnn)$.
In addition, for any Borel set $A\subset \Omega$, approximating the characteristic function ${\bf 1}_A$ by $C^1_c(\Omega)$ functions, we deduce from \eqref{eq:phi1A} that for a.e.\ $t\in (0,T)$
$$\tau \colon \dot{p}(t)(A) \leq \mu(t)(A) \quad\text{for every Borel set } A\subset \Omega \text{ and every } \ \tau \in \mathbf{K}\,$$
Maximizing with respect to all $\tau \in \mathbf K$ yields by definition of $H$,
$$H\big(\dot{p}(t)(A)\big) \leq \mu(t)(A) \quad\text{for every Borel set } A\subset \Omega.$$
By the definition of a convex function of measures (cf.\ e.g.\ \cite{GS}), this leads to 
 \begin{equation}\label{1410192004}
 H(\dot{p}(t)) \leq \mu(t) \quad \text{in } \Mb^+(\Omega)\,.
 \end{equation}
Moreover, using \eqref{eq:H}, for a.e. $t \in (0,T)$,  we get that $0 \leq r |\dot p(t)| \leq \mu(t)$ in $\mathcal M^+_b(\Omega)$, 
which, together with \eqref{1410192022}, implies that 
\begin{equation}\label{eq:dotp}
\dot p \in L^\infty_{w*}(0,T;\mathcal M_b(\Omega;\Mnn))\,.
\end{equation} 
From this last property, we deduce that
$$ p\in C^{0,1}([0,T]; \Mb(\Omega;\Mnn))\,, \quad u \in C^{0,1}([0,T]; BD(\Omega))\,.$$
Indeed, for all $\phi \in C_c(\Omega)$ the function $t \mapsto \langle p(t), \phi \rangle$ belongs to $W^{1,\infty}(0,T; \Mnn)$ and $\frac{\de}{\de t}\langle p(t), \phi \rangle= \langle \dot{p}(t), \phi \rangle$ for a.e.\ $t \in (0,T)$. By \eqref{eq:dotp} we infer that 
$$|\langle p(t_2)-p(t_1), \phi \rangle |\leq  \sup_{t \in (0,T)} |\dot p(t)|(\Omega) \,\, \|\phi\|_{L^\infty(\Omega;\Mnn)} (t_2-t_1) \quad\text{for all } 0\leq t_1 \leq t_2 \leq T\,,$$
and the previous inequality gives $p\in C^{0,1}([0,T]; \Mb(\Omega;\Mnn))$, while $u \in C^{0,1}([0,T]; BD(\Omega))$ follows from \eqref{1410191146}. 

Thus, for a.e.\ $t\in (0,T)$, we have the additive decomposition $\mathrm{E}\dot{u}(t)=\dot{e}(t) + \dot{p}(t)$ with $\dot{u}(t)\in BD(\Omega) \cap L^2(\Omega;\Rn)$, $\dot{e}(t) \in \Lnn$, $\dot{p}(t) \in \Mb(\Omega;\Mnn)$,  and $\sigma(t) \in \mathcal{K}(\Omega)$. Moreover, using \eqref{1410192004}, we get that $H(\dot{p}(t))$ is a finite measure. Thus we satisfy the conditions that guarantee 
\eqref{eq:conv-ineq} and \eqref{1410191537}, which in turn give $[\sigma(t) \colon \dot{p}(t)] \in \Mb(\Omega)$ for a.e.\ $t \in (0,T)$ and 
$$\mu(t) \mres \Omega = [\sigma(t) \colon \dot{p}(t)] \mres \Omega \leq H(\dot{p}(t)) \quad\text{in }\Mb^+(\Omega) \ \text{ for a.e.\ }t \in (0,T)\,.$$
Together with \eqref{1410192004}, the estimate above gives that
\begin{equation}\label{eq:mut=H}
\mu(t)\mres\Omega= [\sigma(t) \colon \dot{p}(t)] \mres \Omega= H(\dot{p}(t))\quad\text{in }\Mb(\Omega) \ \text{ for a.e.\ }t \in (0,T)\,.
\end{equation}

\medskip

\paragraph{\it Step 5: Integrability property of the normal stress.} Putting the previous information into \eqref{1410191524} and using the integration by parts formula in $BD$ yields
 \begin{equation*}
 \begin{split}
& \int_0^T \int_\Omega \varphi \de  H(\dot p(t))\de t+\int_0^T\int_\dom \varphi \de \mu(t) \,\de t + \int_0^T \langle \sigma\nu, z\varphi \rangle_\dom  \de t 
+\int_0^T \int_\dom (\tau\nu)\cdot \dot u \,  \varphi \dh \de t \\
&-  \int_0^T \langle \tau \, \colon\dot p(t),\varphi(t)\rangle  \de t
 +\frac14\int_0^T\int_\dom   S^{-1}(\tau\nu-Sz)\cdot(\tau\nu-Sz)\varphi\dh \de t\geq 0\,,
 \end{split}
\end{equation*}
for any $\varphi \in C^1_c(\R^n \times (0,T))$ with $\varphi \geq 0$ and any $(z,\tau) \in \R^n \times \mathbf{K}$.
Localizing in time, we get that for a.e. $t \in (0,T)$
 \begin{multline*}
\int_\Omega\phi\de H(\dot p(t))+\int_\dom \phi\de \mu(t)  + \langle \sigma(t)\nu, z\phi \rangle_\dom -  \langle \tau \, \colon\dot p(t),\phi\rangle\\
+\int_\dom (\tau\nu)\cdot \dot u(t)\, \phi \dh +\frac14\int_\dom   S^{-1}(\tau\nu-Sz)\cdot(\tau\nu-Sz)\, \phi\dh\geq 0\,,
\end{multline*}
for any $\phi \in C^1_c(\R^n)$ with $\phi \geq 0$ and any $(z,\tau) \in \R^n \times \mathbf{K}$. As a consequence of the previous inequality, we deduce that, for a.e. $t \in (0,T)$, the normal trace $\sigma(t)\nu$ is a measure in $\mathcal M_b(\partial\Omega;\Rn)$ so that the previous inequality can be localized on $\partial \Omega$. We thus get that for a.e. $t \in (0,T)$,
\begin{multline}\label{eq:mut}
\mu(t) \mres\dom + z\cdot( \sigma(t)\nu) \\
\geq  \left( -(\tau\nu)\cdot \dot u(t) -\frac14   S^{-1}(\tau\nu-Sz)\cdot(\tau\nu-Sz)\right)\hn\mres\dom \quad \text{ in }\mathcal M_b^+(\dom)
\end{multline}
for any $(z,\tau) \in \R^n \times \mathbf{K}$. Using the Besicovitch decomposition Theorem we can split $\sigma(t)\nu$ as $\sigma(t)\nu=\sigma_t \hn \mres \dom + \sigma_t^s$ for some $\sigma_t \in L^1(\dom;\Rn)$ and some singular measure $\sigma_t^s \in \mathcal M_b(\dom;\Rn)$ with respect to $\hn\mres\dom$. Similarly, we can decompose $\mu(t)\mres\dom$ as $\mu(t)\mres \dom=\mu_t^a + \mu_t^s$ for some nonnegative measures $\mu_t^a$ and $\mu_t^s$ which are, respectively, absolutely continuous and singular with respect to $\hn\mres\dom$. According to \eqref{eq:mut}, we have that
$$\mu(t)\mres\dom\geq \mu_t^s \geq - z\cdot \sigma_t^s\quad \text{ in }\mathcal M_b^+(\dom)$$
so that if $A \subset \dom$ is a Borel set such that $\sigma_t^s(A) \neq 0$, then the previous inequality leads to a contradiction by sending $z$ to infinity since $\mu(t)$ is a finite measure. Therefore, the singular part $\sigma_t^s$ of the measure $\sigma(t)\nu$ vanishes, and $\sigma(t)\nu$ is absolutely continuous with respect to $\hn\mres\dom$. Identifying the measure $\sigma(t)\nu$ with its density $\sigma_t$ and going back to \eqref{eq:mut} yields
\begin{multline*}
\mu(t) \mres\dom + z\cdot(\sigma(t)\nu)\hn\mres\dom\\
 \geq  \left( -(\tau\nu)\cdot \dot u(t) -\frac14   S^{-1}(\tau\nu-Sz)\cdot(\tau\nu-Sz)\right)\hn\mres\dom\quad \text{ in }\mathcal M_b^+(\dom).
 \end{multline*}
A classical measure theoreric argument (see e.g.  \cite[Proposition 1.6]{Braides}) implies that one can pass to the supremum in $\tau \in \mathbf K$ under the integral sign, hence
\begin{multline}\label{eq:suptau}
\mu(t) \mres\dom + z\cdot(\sigma(t)\nu)\hn\mres\dom \\
\geq  \left[\sup_{\tau \in \mathbf K}\left( -(\tau\nu)\cdot \dot u(t) -\frac14   S^{-1}(\tau\nu-Sz)\cdot(\tau\nu-Sz)\right)\right]\hn\mres\dom\quad \text{ in }\mathcal M_b^+(\dom).
\end{multline}
Using standard convex analysis arguments, the previous supremum may be expressed, for a.e. $(x,t) \in \partial\Omega \times (0,T)$, as
\begin{multline*}
\sup_{\tau \in \mathbf K} \left( -(\tau\nu(x))\cdot \dot u(x,t) -\frac14   S^{-1}(x)(\tau\nu(x)-S(x)z)\cdot(\tau\nu(x)-S(x)z)\right)\\
= \sup_{q \in -\mathbf K\nu(x)}\left( q\cdot \dot u(x,t) -\frac14   S^{-1}(x)(S(x)z+q)\cdot(S(x)z+q)\right)\\
= \left(\frac14 S^{-1}(x)(S(x)z+\bullet)\cdot(S(x)z+\bullet)+{\rm I}_{-\mathbf K\nu(x)} \right)^*\hspace{-0.6em}(\dot u(x,t))\\
= \left(\frac14 S^{-1}(x)(S(x)z+\bullet)\cdot(S(x)z+\bullet)\right)^* \boxempty ({\rm I}_{-\mathbf K\nu(x)})^*(\dot u(x,t))\,.
\end{multline*}
Recalling that $({\rm I}_{-\mathbf K\nu(x)})^*=h(x,\bullet)$, and computing the convex conjugate
\begin{eqnarray*}
\left(\frac14 S^{-1}(x)(S(x)z+\bullet)\cdot(S(x)z+\bullet)\right)^*(q)&=&\frac12 S(x)q\cdot q + \frac12 S(x)(q-z)\cdot(q-z)-\frac12 S(x)z\cdot z\\
& \geq & \frac12 S(x)q\cdot q -\frac12 S(x)z\cdot z
\end{eqnarray*}
we obtain that
\begin{multline*}
\sup_{\tau \in \mathbf K}\left( -(\tau\nu(x))\cdot \dot u(x,t) -\frac14   S^{-1}(x)(\tau\nu(x)-S(x)z)\cdot(\tau\nu(x)-S(x)z)\right)\\
 \geq  f(x,\bullet) \boxempty h(x,\bullet)(\dot u(x,t)) - \frac12 S(x)z\cdot z= \psi(x,\dot u(x,t))- \frac12 S(x)z\cdot z\,.
 \end{multline*}
Inserting this last inequality into \eqref{eq:suptau}, we infer that for a.e. $t \in (0,T)$
$$\int_\dom \phi \de \mu(t) \geq \int_\dom \phi \left(-z\cdot(\sigma(t)\nu)- \frac12 Sz\cdot z\right)\dh+\int_\dom \phi \, \psi(x,\dot u(t))\dh$$
for all $\phi \in C(\dom)$ with $\phi\geq 0$. Using again \cite[Proposition 1.6]{Braides}, we can pass to the supremum in $z \in \Rn$ under the integral sign which leads to
\begin{equation}\label{eq:ineqmut2}
\int_\dom \phi \de \mu(t) \geq \frac12 \int_\dom \phi S^{-1}(\sigma(t)\nu)\cdot (\sigma(t)\nu)\dh +\int_\dom \phi \, \psi(x,\dot u(t))\dh.
\end{equation}
Employing  the coercivity property \eqref{2711182136} of $S$ and \eqref{1410192022}, we deduce that
$$\sigma\nu \in L^\infty(0,T;L^2(\dom;\Rn)).$$

\medskip

\paragraph{\it Step 6: The boundary condition.} Using \eqref{eq:ineqmut2}, \eqref{1410191330} and the flow rule \eqref{eq:mut=H} yields, for a.e. $t \in (0,T)$,
\begin{multline*}
\frac12 \int_\dom   S^{-1}(\sigma(t)\nu)\cdot (\sigma(t)\nu)\dh +\int_\dom  \psi(x,\dot u(t))\dh\leq  \mu(t)(\dom) 
=\mu(t)(\R^n)-\mu(t)(\Omega)\\=- \int_\Omega   ( \ddot{u}(t) \cdot \dot{u}(t) +\dot{e}(t)\colon \sigma(t) - f(t) \cdot \dot{u }(t)) \dx- \mathcal H(\dot p(t)).
\end{multline*}
Integrating by parts with respect to time we deduce that  for all $0 \leq t \leq T$
\begin{equation*}
\begin{split}
&\frac12 \int_\Omega |\dot u(t)|^2\dx + \mathcal  Q(e(t)) +  \int_{0}^{t} \mathcal H( \dot p(s)) \ds 
+  \int_{0}^{t} \int_\dom  \psi(x, \dot{u})  \dh \ds\\& + \frac{1}{2} \int_{0}^{t} \int_\dom  S^{-1}(\sigma \nu) \cdot (\sigma \nu) \dh \ds   
\leq \frac12  \int_\Omega  |v_0|^2\dx + \mathcal   Q(e_0) + \int_0^t \int_\Omega f \cdot \dot{u}  \dx \ds \,.
\end{split}
\end{equation*}
Using Proposition \ref{prop:IPP} and arguing word for word as in the second part of Subsection \ref{sec:energy-balance}, we obtain the other energy inequality
\begin{equation*}
\begin{split}
&\frac12 \int_\Omega |\dot u(t)|^2\dx + \mathcal   Q(e(t)) +  \int_{0}^{t} \mathcal H( \dot p(s)) \ds +  \int_{0}^{t} \int_\dom  \psi(x, \dot{u})  \dh \ds \\& + \frac{1}{2} \int_{0}^{t} \int_\dom  S^{-1}(\sigma \nu) \cdot (\sigma \nu) \dh \ds   
\geq \frac12  \int_\Omega  |v_0|^2\dx + \mathcal Q(e_0)\dx + \int_0^t \int_\Omega f \cdot \dot{u}  \dx \ds \,.
\end{split}
\end{equation*}

To summarize, we have proved that the triple $(u,e,p)$ satisfies
$$\begin{dcases}
u \in W^{2, \infty}(0,T; L^2(\Omega;\Rn)) \cap C^{0,1}([0,T]; BD(\Omega))\,,\\
e \in W^{1, \infty}(0,T; \Lnn)\,,\\
p \in C^{0,1}([0,T]; \mathcal M_b(\Omega;\Mnn))\,,
\end{dcases}$$
$$\sigma:= \C e \in L^\infty(0,T;\Hdiv),\quad \sigma\nu \in L^\infty(0,T;L^2(\dom;\Rn))$$
and there hold, at the moment, all the conditions in Definition~\ref{def:variational} except condition 4.

\medskip

\paragraph{\it Step 7: Dissipative boundary conditions}
From Remark \ref{rem:unique}, it follows that $(u,e,p)$ is the unique variational solution  the dynamic elasto--plastic model associated to the initial data $(u_0,v_0,e_0,p_0)$ and the source term $f$. In particular the dissipative boundary condition 
$$\sigma\nu+{\rm P}_{-\mathbf K\nu}(S\dot u)=0 \quad \text{ in }L^2(0,T;L^2(\dom;\Rn))$$
is satisfied, and this completes the proof of the Theorem.
\end{proof}

By  Theorems \ref{teo:existence_variational}, \ref{thm:vardiss} and \ref{thm:dissvar}, it turns out that both notions of variational and entropic--dissipative solutions coincide. We thus deduce the following well--posedness result for entropic--dissipative solutions.
\begin{corollary}
Assuming $(H_1)$--$(H_6)$, there exists a unique entropic--dissipative solution $(v,\sigma)$ associated to the initial data $(v_0,\sigma_0)$ and the source term $f$ according to Definition~\ref{def:entropic}. 
\end{corollary}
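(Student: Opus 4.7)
The plan is to deduce the corollary directly from the equivalence theorems established above together with the well-posedness of the variational formulation. Existence is the straightforward direction: starting from the initial data $(u_0,v_0,e_0,p_0)$ provided by $(H_6)$ and the source term $f$, Theorem~\ref{teo:existence_variational} produces a (unique) variational solution $(u,e,p)$; applying Theorem~\ref{thm:vardiss} then shows that the pair $(v,\sigma):=(\dot u,\mathbf A e)$ is an entropic--dissipative solution in the sense of Definition~\ref{def:entropic}, with the required regularity $(v,\sigma)\in W^{1,\infty}(0,T;L^2(\Omega;\R^n\times\mathbf K))$ inherited from the variational regularity class.

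For uniqueness, I would argue as follows. Let $(v_1,\sigma_1)$ and $(v_2,\sigma_2)$ be two entropic--dissipative solutions associated to the same data $(v_0,\sigma_0)$ and $f$. Fix the initial displacement $u_0\in H^1(\Omega;\R^n)$ given by $(H_6)$ and set $e_{0}:=\mathbf A^{-1}\sigma_0$, $p_0:=\mathrm E u_0 - e_0$; this is the common lifting needed to apply Theorem~\ref{thm:dissvar}. By that theorem, applied to each $(v_i,\sigma_i)$ in turn, we obtain triples $(u_i,e_i,p_i)$, $i=1,2$, which are both variational solutions associated to the same initial data $(u_0,v_0,e_0,p_0)$ and to $f$, with $v_i=\dot u_i$ and $\sigma_i=\mathbf A e_i$. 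The uniqueness part of Theorem~\ref{teo:existence_variational} then forces $(u_1,e_1,p_1)=(u_2,e_2,p_2)$, whence $v_1=v_2$ and $\sigma_1=\sigma_2$.

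The only mildly delicate point is verifying that the hypotheses required to invoke Theorem~\ref{thm:dissvar} (namely $v_0\in H^2(\Omega;\R^n)$, $\sigma_0\in\mathcal K(\Omega)$, the boundary compatibility $Sv_0+\sigma_0\nu=0$ on $\dom$, and $f\in H^1(0,T;L^2(\Omega;\R^n))$) are all automatically provided by $(H_5)$--$(H_6)$, so the lifting procedure is legitimate. I expect no genuine obstacle here: both theorems have already done the heavy lifting, and the corollary is essentially a packaging statement. Note moreover that the choice of $u_0$ affects $u_i$ but neither $v_i=\dot u_i$ nor $\sigma_i=\mathbf A e_i$, so the resulting uniqueness statement for $(v,\sigma)$ does not depend on any such auxiliary choice.
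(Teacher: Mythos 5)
Your proposal is correct and takes exactly the approach the paper intends: existence follows by combining Theorem~\ref{teo:existence_variational} with Theorem~\ref{thm:vardiss}, and uniqueness follows by applying Theorem~\ref{thm:dissvar} to each entropic--dissipative solution (with a common lift of the initial data compatible with $(H_6)$) and then invoking the uniqueness half of Theorem~\ref{teo:existence_variational}. The paper leaves this packaging argument implicit in a single sentence, and you have simply spelled it out.
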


\section*{Acknowledgements}

V.\ Crismale has been supported by the Marie Sk\l odowska-Curie Standard European Fellowship No 793018, project \emph{BriCoFra}, and acknowledges the support of the LMH through the \emph{Investissement d'avenir} project, reference ANR-11-LABX-0056-LMH, LabEx LMH.

%\bibliographystyle{siam}
%\bibliography{biblioHypPlas}

\begin{thebibliography}{10}

\bibitem{AntBur10}
{\sc N.~Antoni\'{c} and K.~Burazin}, {\em Intrinsic boundary conditions for
  {F}riedrichs systems}, Comm. Partial Differential Equations, 35 (2010),
  pp.~1690--1715.

\bibitem{AntBurCrnErc17}
{\sc N.~Antoni\'{c}, K.~Burazin, I.~Crnjac, and M.~Erceg}, {\em Complex
  {F}riedrichs systems and applications}, J. Math. Phys., 58 (2017),
  pp.~101508, 22.

\bibitem{AnzAMPA}
{\sc G.~Anzellotti}, {\em Pairings between measures and bounded functions and
  compensated compactness}, Ann. Mat. Pura Appl. (4), 135 (1983), pp.~293--318
  (1984).

\bibitem{AnzGia78}
{\sc G.~Anzellotti and M.~Giaquinta}, {\em B{V} functions and traces}, Rend.
  Sem. Mat. Univ. Padova, 60 (1978), pp.~1--21 (1979).

\bibitem{AnzGia82}
\leavevmode\vrule height 2pt depth -1.6pt width 23pt, {\em On the existence of
  the fields of stresses and displacements for an elasto-perfectly plastic body
  in static equilibrium}, J. Math. Pures Appl. (9), 61 (1982), pp.~219--244
  (1983).

\bibitem{AnzLuc87}
{\sc G.~Anzellotti and S.~Luckhaus}, {\em Dynamical evolution of
  elasto-perfectly plastic bodies}, Appl. Math. Optim., 15 (1987),
  pp.~121--140.

\bibitem{Bab}
{\sc J.-F. Babadjian}, {\em Traces of functions of bounded deformation},
  Indiana Univ. Math. J., 64 (2015), pp.~1271--1290.

\bibitem{BM17}
{\sc J.-F. Babadjian and C.~Mifsud}, {\em Hyperbolic structure for a simplified
  model of dynamical perfect plasticity}, Arch. Ration. Mech. Anal., 223
  (2017), pp.~761--815.

\bibitem{BabMifSeg16}
{\sc J.-F. Babadjian, C.~Mifsud, and N.~Seguin}, {\em Relaxation approximation
  of {F}riedrichs' systems under convex constraints}, Netw. Heterog. Media, 11
  (2016), pp.~223--237.

\bibitem{BabMor15}
{\sc J.-F. Babadjian and M.~G. Mora}, {\em Approximation of dynamic and
  quasi-static evolution problems in elasto-plasticity by cap models}, Quart.
  Appl. Math., 73 (2015), pp.~265--316.

\bibitem{Braides}
{\sc A.~Braides}, {\em Approximation of free-discontinuity problems}, vol.~1694
  of Lecture Notes in Mathematics, Springer-Verlag, Berlin, 1998.

\bibitem{BurErc16}
{\sc K.~Burazin and M.~Erceg}, {\em Non-stationary abstract {F}riedrichs
  systems}, Mediterr. J. Math., 13 (2016), pp.~3777--3796.

\bibitem{CheFri99}
{\sc G.-Q. Chen and H.~Frid}, {\em Divergence-measure fields and hyperbolic
  conservation laws}, Arch. Ration. Mech. Anal., 147 (1999), pp.~89--118.

\bibitem{CD}
{\sc M.~Costabel and M.~Dauge}, {\em Un r\'{e}sultat de densit\'{e} pour les
  \'{e}quations de {M}axwell r\'{e}gularis\'{e}es dans un domaine
  lipschitzien}, C. R. Acad. Sci. Paris S\'{e}r. I Math., 327 (1998),
  pp.~849--854.

\bibitem{DMDSMor06}
{\sc G.~Dal~Maso, A.~DeSimone, and M.~G. Mora}, {\em Quasistatic evolution
  problems for linearly elastic-perfectly plastic materials}, Arch. Ration.
  Mech. Anal., 180 (2006), pp.~237--291.

\bibitem{DMSca}
{\sc G.~Dal~Maso and R.~Scala}, {\em Quasistatic evolution in perfect
  plasticity as limit of dynamic processes}, J. Dynam. Differential Equations,
  26 (2014), pp.~915--954.

\bibitem{DavSte19}
{\sc E.~Davoli and U.~Stefanelli}, {\em Dynamic perfect plasticity as convex
  minimization}, SIAM J. Math. Anal., 51 (2019), pp.~672--730.

\bibitem{DT1}
{\sc F.~Demengel and R.~Temam}, {\em Convex functions of a measure and
  applications}, Indiana Univ. Math. J., 33 (1984), pp.~673--709.

\bibitem{DT2}
\leavevmode\vrule height 2pt depth -1.6pt width 23pt, {\em Convex function of a
  measure: the unbounded case}, in F{ERMAT} days 85: mathematics for
  optimization ({T}oulouse, 1985), vol.~129 of North-Holland Math. Stud.,
  North-Holland, Amsterdam, 1986, pp.~103--134.

\bibitem{DepLagSeg11}
{\sc B.~Despr\'{e}s, F.~Lagouti\`ere, and N.~Seguin}, {\em Weak solutions to
  {F}riedrichs systems with convex constraints}, Nonlinearity, 24 (2011),
  pp.~3055--3081.

\bibitem{ErnGue06}
{\sc A.~Ern and J.-L. Guermond}, {\em Discontinuous {G}alerkin methods for
  {F}riedrichs' systems. {I}. {G}eneral theory}, SIAM J. Numer. Anal., 44
  (2006), pp.~753--778.

\bibitem{ErnGueCap07}
{\sc A.~Ern, J.-L. Guermond, and G.~Caplain}, {\em An intrinsic criterion for
  the bijectivity of {H}ilbert operators related to {F}riedrichs' systems},
  Comm. Partial Differential Equations, 32 (2007), pp.~317--341.

\bibitem{FraGia12}
{\sc G.~A. Francfort and A.~Giacomini}, {\em Small-strain heterogeneous
  elastoplasticity revisited}, Comm. Pure Appl. Math., 65 (2012),
  pp.~1185--1241.

\bibitem{F}
{\sc K.~O. Friedrichs}, {\em Symmetric positive linear differential equations},
  Comm. Pure Appl. Math., 11 (1958), pp.~333--418.

\bibitem{GS}
{\sc C.~Goffman and J.~Serrin}, {\em Sublinear functions of measures and
  variational integrals}, Duke Math. J., 31 (1964), pp.~159--178.

\bibitem{Hill}
{\sc R.~Hill}, {\em The {M}athematical {T}heory of {P}lasticity}, Oxford, at
  the Clarendon Press, 1950.

\bibitem{MH}
{\sc T.~J.~R. Hughes and J.~E. Marsden}, {\em Classical elastodynamics as a
  linear symmetric hyperbolic system}, J. Elasticity, 8 (1978), pp.~97--110.

\bibitem{JerKen82}
{\sc D.~S. Jerison and C.~E. Kenig}, {\em Boundary value problems on
  {L}ipschitz domains}, in Studies in partial differential equations, vol.~23
  of MAA Stud. Math., Math. Assoc. America, Washington, DC, 1982, pp.~1--68.

\bibitem{KohTem83}
{\sc R.~Kohn and R.~Temam}, {\em Dual spaces of stresses and strains, with
  applications to {H}encky plasticity}, Appl. Math. Optim., 10 (1983),
  pp.~1--35.

\bibitem{Kru70}
{\sc S.~N. Kru\v{z}kov}, {\em First order quasilinear equations with several
  independent variables}, Mat. Sb. (N.S.), 81 (123) (1970), pp.~228--255.

\bibitem{Lio96}
{\sc P.-L. Lions}, {\em Mathematical topics in fluid mechanics. {V}ol. 1},
  vol.~3 of Oxford Lecture Series in Mathematics and its Applications, The
  Clarendon Press, Oxford University Press, New York, 1996.
\newblock Incompressible models, Oxford Science Publications.

\bibitem{Lubliner}
{\sc J.~Lubliner}, {\em Plasticity Theory}, Macmillan Publishing Company, New
  York, 1970.

\bibitem{MNRR}
{\sc J.~M\'{a}lek, J.~Ne\v{c}as, M.~Rokyta, and M.~Ru\v{z}i\v{c}ka}, {\em Weak
  and measure-valued solutions to evolutionary {PDE}s}, vol.~13 of Applied
  Mathematics and Mathematical Computation, Chapman \& Hall, London, 1996.

\bibitem{MifThese}
{\sc C.~Mifsud}, {\em Variational and hyperbolic methods applied to constrained
  mechanical systems}, PhD thesis, Pierre et Marie Curie University, 2016.

\bibitem{DMS}
{\sc C.~Mifsud, B.~Despr\'{e}s, and N.~Seguin}, {\em Dissipative formulation of
  initial boundary value problems for {F}riedrichs' systems}, Comm. Partial
  Differential Equations, 41 (2016), pp.~51--78.

\bibitem{O}
{\sc F.~Otto}, {\em Initial-boundary value problem for a scalar conservation
  law}, C. R. Acad. Sci. Paris S\'{e}r. I Math., 322 (1996), pp.~729--734.

\bibitem{Rock}
{\sc R.~T. Rockafellar}, {\em Convex analysis}, Princeton Mathematical Series,
  No. 28, Princeton University Press, Princeton, N.J., 1970.

\bibitem{Suquet}
{\sc P.-M. Suquet}, {\em Un espace fonctionnel pour les \'{e}quations de la
  plasticit\'{e}}, Ann. Fac. Sci. Toulouse Math. (5), 1 (1979), pp.~77--87.

\bibitem{Suq81}
\leavevmode\vrule height 2pt depth -1.6pt width 23pt, {\em Evolution problems
  for a class of dissipative materials}, Quart. Appl. Math., 38 (1980/81),
  pp.~391--414.

\bibitem{Suq81b}
\leavevmode\vrule height 2pt depth -1.6pt width 23pt, {\em Sur les
  \'{e}quations de la plasticit\'{e}: existence et r\'{e}gularit\'{e} des
  solutions}, J. M\'{e}canique, 20 (1981), pp.~3--39.

\bibitem{Tem85}
{\sc R.~Temam}, {\em Probl\`emes math\'{e}matiques en plasticit\'{e}}, vol.~12
  of M\'{e}thodes Math\'{e}matiques de l'Informatique [Mathematical Methods of
  Information Science], Gauthier-Villars, Montrouge, 1983.

\bibitem{TemStr80}
{\sc R.~Temam and G.~Strang}, {\em Duality and relaxation in the variational
  problems of plasticity}, J. M\'{e}canique, 19 (1980), pp.~493--527.

\end{thebibliography}

\end{document}